\numberwithin{equation}{section}
\renewcommand{\thesection}{\arabic{section}}
\renewcommand{\theequation}{\thesection.\arabic{equation}}
\newtheorem{theorem}[equation]{Theorem}
\newtheorem{lemma}[equation]{Lemma}
\newtheorem{corollary}[equation]{Corollary}
\newtheorem{proposition}[equation]{Proposition}
\newtheorem{definition}[equation]{Definition}
\newtheorem{remark}[equation]{Remark}
\newtheorem{example}[equation]{Example}
\newtheorem{question}[equation]{Question}
\numberwithin{equation}{section}
\renewcommand{\thesection}{\arabic{section}}
\renewcommand{\theequation}{\thesection.\arabic{equation}}
\newcounter{subeq}
\numberwithin{subeq}{equation}
\renewcommand{\thesubeq}{\theequation.\arabic{subeq}}
\def\calC{\mathcal{C}}
\def\calO{\mathcal{O}}
\def\bbP{\mathbb{P}}
\def\bbX{\mathbb{X}}
\def\bbR{\mathbb{R}}
\newcommand{\vdim}{\operatorname{vdim}}
\def\mod{\text{mod}\;}
\begin{document}
\title{Coxeter theory for curves on blowups of $\bbP^r$}


\author{Olivia Dumitrescu and Rick Miranda}

\author{
	Olivia Dumitrescu  \\
	\small University of North Carolina at Chapel Hill \\ \small Chapel Hill, NC 27599-3250 \\
	\and
	Rick Miranda \\
	\small Colorado State University \\ \small Fort Collins, CO 80523 USA}

\maketitle

\begin{abstract}
We investigate the study of smooth irreducible rational curves
in $Y_s^r$, a general blowup of $\bbP^r$ at $s$ general points, whose normal bundle splits as a direct sum of line bundles all of degree $i$,
for $i \in \{-1,0,1\}$: we call these \emph{$(i)$-curves}.
We systematically exploit the theory of Coxeter groups
applied to the Chow space of curves in $Y_s^r$,
which provides us with a useful bilinear form that helps to expose properties of $(i)$-curves.
We are particularly interested in the orbits of lines (through $1-i$ points)
under the Weyl group of standard Cremona transformations (all of which are $(i)$-curves):
we call these \emph{$(i)$-Weyl lines}.
We prove various theorems related to understanding
when an $(i)$-curve is an $(i)$-Weyl line,
via numerical criteria expressed in terms of the bilinear form.
We obtain stronger results for $r=3$,
where we prove a Noether-type inequality that gives a sharp criterion.
\end{abstract}

\tableofcontents

\section{Introduction}
Let us consider the rational variety $Y=Y_s^r$
defined as the blowup of $\bbP^r$ at $s$ general points $p_1,\ldots,p_s$,
with blowup map $\pi:Y \to \bbP^r$.

In this paper we study properties of curves in the space $Y$.
The study of curves in projective space
is a well known problem in algebraic geometry,
that goes back centuries.
In recent work \cite{DM2}
the authors defined the concept of $(i)$-curves in $Y$
to be \emph{smooth, rational, irreducible curves}
with normal bundle splitting as a direct sum of
$\mathcal{O}_{\mathbb{P}^1} (i)$,
for $i\in \{-1, 0, 1\}$.
Prototypes for these curves are given by lines, passing through at most $2$ points.
For example, $(-1$)-curves are rigid,
and they give examples of flopping classes,
while $(0)$- and $(1)$-curves are movable
and they determine extremal rays
for the effective cone of divisors for $Y_s^r$.
The property of a space being a Mori Dream Space
can also be proved via the theory of movable curves,
(e.g. certain $Y_s^r$ spaces \cite{DM2}).
The minimal model program in birational geometry
has been formulated via the theory of divisors,
and it is an interesting question
to understand it via the theory of curves.

One can obtain $(i)$-curves in $Y_s^r$
by systematically applying standard Cremona transformations to lines through at most $2$ points.
Such an $(i)$-curve will be called an \emph{$(i)$-Weyl line},
and one of the goals of the paper is to understand when a given Chow class
can be an $(i)$-Weyl line.

In \cite{DM2} the authors proved that
when $Y$ is a Mori Dream Space,
the $(-1)$-Weyl curves can be identified numerically, using the parameters in the Chow class,
by a linear invariant known as the
\emph{anticanonical degree}
and a quadratic invariant,
the \emph{self-intersection} of the curve via a bilinear pairing
that emerges from the Coxeter theory of the Weyl group acting on the curve classes.
(This pairing is directly equivalent to the Dolgachev-Mukai pairing.)

However, for spaces that are not Mori Dream Spaces,
this result fails as Example \ref{examplesd=7and13} shows for $Y^3_{12}$.
In this paper we give a numerical characterization
for $(-1)$-Weyl curves in $\mathbb{P}^3$ 
by adding an additional numerical condition 
reflecting the irreducibility of the curve,
related to studying the projections of the curve.

In Section \ref{chow section}
we review the properties of the Chow ring and projections;
in Section \ref{cremona section}
we review the Cremona transformation action on curves,
and in Section \ref{weyl section}
we review the Coxeter theory and Weyl group action on the Chow curve classes $A^{r-1}(Y)$.
Our main results are the following:

\begin{itemize}
\item[(A)] The action of the Weyl group on the subspace of $A^{r-1}(Y)$ which is orthogonal to the canonical class is isomorphic to the standard geometric representation of the Coxeter group
associated to the graph $T_{2,r+1,s-r-1}$ (see Proposition \ref{actionW1V1}).
\item[(B)] For $r \geq 3$, all $(i)$-Weyl line classes of degree greater than one satisfy the inequality $d \leq \sum_{i\leq r+1} m_i$ when the multiplicities are put in non-increasing order; hence all such classes can be monotonically reduced to the line class through $1-i$ points by applying a sequence of standard Cremona transformations
(see Theorems \ref{thm:noCR-1}, \ref{thm:0weyllines}, \ref{thm:1weyllines}).
\item[(C)] We prove a direct analogue of Noether's Inequality for curves in $\bbP^3$,
indicating that under certain numerical conditions,
one can apply a standard Cremona transformation to a curve class to reduce the degree
(see Theorem \ref{noether}).
\item[(D)] We give a numerical criterion for a curve class to be an $(i)$-Weyl line in Theorem \ref{equivalence2}.
\end{itemize}



Theorem \ref{equivalence2} resembles the theory for divisors.
We leave as an open question if $(-1)$-Weyl cycles,
i.e. Weyl orbits of linear spaces,
can be equivalently defined by numerical conditions
induced by the Coxeter theory.
In this direction,
the paper \cite{DP} also gives equivalent conditions
for a divisor to be an $(i)$-Weyl divisor:
satisfying a linear and quadratic formula,
and with positive intersection with all $(-1)$-divisors
with respect to the Dolgachev-Mukai pairing. 

\subsection*{Acknowledgements} 
The collaboration  was partially supported by NSF grant DMS 1802082/ DMS 2041740 and by the Simons Foundation: Collaboration Grants 855897. The first author is a member of the Simion Stoilow Institute of Mathematics, Romanian Academy, 010702 Bucharest, Romania.

\section{The Chow ring of $Y_s^r$}\label{chow section}

\subsection{Generators and multiplication in $A(Y_s^r)$}

Let $H$ be the hyperplane class in $\bbP^r$,
and $E_i$, $1 \leq i \leq s$, be the exceptional divisors in $Y=Y_s^r$;
each $E_i$ is isomorphic to $\bbP^{r-1}$.

The Chow ring $A(Y) = \oplus_{0 \leq j \leq r} A^j(Y)$ of $Y$
is easy to describe.
Let $h_j = [\pi^*(H^j)]$ be the class of the pullback of the general linear space of codimension $j$.
Let $e_{i,j}$ be the class of the general linear subspace of $E_i$ of codimension $j$ in $Y$
(hence of codimension $j-1$ in $E_i$).
The following is standard; see for example \cite{EH}.

\begin{proposition}
With the above notation, we have:
\begin{itemize}
\item[(a)] The Chow ring $A(Y)$ is generated as a ring by the divisor classes
$h_1$, $e_{i,1}$, $1\leq i \leq s$.
\item[(b)] The Chow group $A^0(Y)$ is one-dimensional,
generated by the identity class $[Y]$;
dually, the Chow group $A^{r}(Y)$ is one-dimensional,
generated by the class $[p]$ of a point $p$
(which is equal to $h_r$ and $e_{i,r}$ for every $i$)
\item[(c)] The Chow groups $A^j(Y)$ for $1 \leq j \leq r-1$ have dimension $s+1$,
with basis
$h_j$, $e_{i,j}$, $1 \leq i \leq s$.
We set $h_j = e_{i,j} = 0$ for $j \geq r+1$.
\item[(d)] Multiplication in $A(Y)$ is induced from
the intersection form $(-\cdot -)$ on $Y$,
and is given by:
\[
(h_{j_1} \cdot h_{j_2}) = h_{j_1+j_2};\;\;
(h_j \cdot e_{i,\ell}) = 0;\;\;
(e_{i_1,j_1}\cdot e_{i_2,j_2}) = -\delta_{i_1i_2} e_{i_1,j_1+j_2};\;\;
h_r = e_{i,r} = [p].
\]
\item[(e)] The canonical class of $Y$ in $A^1(Y)$ is given by
\[
K_Y = -(r+1) h_1 + (r-1) \sum_{i=1}^s e_{i,1}
\]
and we define the \emph{anticanonical curve class} $F \in A^{r-1}(Y)$ to be
\[
F = (r+1)h_{r-1} - \sum_{i=1}^s e_{i,r-1}
\]
(which we will see is in some sense dual to $-K_Y \in A^1(Y)$).
\end{itemize}
\end{proposition}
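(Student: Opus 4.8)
The entire proposition is the standard description of the Chow ring of a blowup of a smooth variety along a smooth center, specialized to the case where the center is a finite set of distinct (general) points, so my plan is to reduce every part to two well-known inputs from intersection theory: the blowup formula for Chow groups (which controls the additive structure and ring generation) and the self-intersection/key formula $j_*\alpha \cdot j_*\beta = j_*(\alpha\cdot\beta\cdot c_1(N))$ for a divisorial embedding $j$ (which controls all the multiplication signs). Throughout I would use two geometric facts about a single point blowup: the exceptional divisor $E_i$ is isomorphic to $\mathbb{P}^{r-1}$, and its normal bundle in $Y$ is $\mathcal{O}_{E_i}(E_i)\cong\mathcal{O}_{\mathbb{P}^{r-1}}(-1)$. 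Because the $s$ points are distinct, the exceptional divisors $E_1,\dots,E_s$ are pairwise disjoint, which immediately kills all cross terms between different exceptional classes.

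For the additive statements (b) and (c), I would invoke the blowup formula: blowing up a point (codimension $r$) adds, on top of the pullback of $A^*(\mathbb{P}^r)$, exactly one new generator in each codimension $1,\dots,r-1$, namely the classes $e_{i,j}=j_*(\zeta^{j-1})$ where $\zeta$ is the hyperplane class of $E_i\cong\mathbb{P}^{r-1}$. Since $A^*(\mathbb{P}^r)=\mathbb{Z}[H]/(H^{r+1})$ contributes $h_0=[Y],h_1,\dots,h_r=[p]$ and a point contributes nothing in codimension $0$ or $r$, this gives rank $1$ in $A^0(Y)$ and $A^r(Y)$ and rank $1+s$ in each intermediate $A^j(Y)$, with the stated bases. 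For ring generation (a), I would observe that once the multiplication rule in (d) is established, $e_{i,1}^{\,k}=(-1)^{k-1}e_{i,k}$, so every $e_{i,j}$ is a polynomial in the divisor class $e_{i,1}$; together with $h_j=h_1^{\,j}$ this shows $A(Y)$ is generated by $h_1$ and the $e_{i,1}$.

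For the multiplication rules (d), I would treat the three products separately. The relation $h_{j_1}\cdot h_{j_2}=h_{j_1+j_2}$ is immediate because $h_j=\pi^*(H^j)$ and $\pi^*$ is a ring homomorphism. The vanishing $h_j\cdot e_{i,\ell}=0$ follows from general position: a generic codimension-$j$ linear space in $\mathbb{P}^r$ misses $p_i$, so its total transform is disjoint from $E_i$ and meets any class supported on $E_i$ trivially (equivalently, by the projection formula and $\pi_*e_{i,\ell}=0$ for $\ell<r$). For the exceptional products, disjointness gives $e_{i_1,j_1}\cdot e_{i_2,j_2}=0$ when $i_1\neq i_2$, while for $i_1=i_2=i$ I would apply the self-intersection formula with $c_1(N_{E_i/Y})=-\zeta$:
\[
e_{i,j_1}\cdot e_{i,j_2}=j_*(\zeta^{j_1-1})\cdot j_*(\zeta^{j_2-1})=j_*\bigl(\zeta^{j_1-1}\zeta^{j_2-1}(-\zeta)\bigr)=-\,j_*(\zeta^{j_1+j_2-1})=-\,e_{i,j_1+j_2},
\]
which packages into $-\delta_{i_1i_2}e_{i_1,j_1+j_2}$. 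Identifying $h_r=\pi^*(H^r)=[p]$ and $e_{i,r}=j_*(\zeta^{r-1})=[p]$ finishes (d). Finally, part (e) is the canonical bundle formula for a blowup, $K_Y=\pi^*K_{\mathbb{P}^r}+(r-1)\sum_i E_i$, with $K_{\mathbb{P}^r}=-(r+1)H$; the displayed $F$ is merely a definition, its duality with $-K_Y$ being deferred. The only genuinely delicate point is bookkeeping the sign in the self-intersection computation, which rests entirely on the identification $\mathcal{O}_{E_i}(E_i)\cong\mathcal{O}_{\mathbb{P}^{r-1}}(-1)$; everything else is a direct application of the cited blowup formulas.
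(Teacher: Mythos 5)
Your proposal is correct and matches the paper's treatment in the only sense available: the paper offers no proof at all, declaring the proposition standard and citing Eisenbud--Harris, and your argument --- the blowup formula for Chow groups plus the key formula $j_*\alpha\cdot j_*\beta=j_*\bigl(\alpha\cdot\beta\cdot c_1(N)\bigr)$ with $N_{E_i/Y}\cong\mathcal{O}_{\mathbb{P}^{r-1}}(-1)$, disjointness of the $E_i$, and the canonical bundle formula --- is precisely that standard treatment, correctly executed. One microscopic caveat: your parenthetical ``equivalently, by the projection formula'' for $h_j\cdot e_{i,\ell}=0$ is weaker than claimed, since $\pi_*(h_j\cdot e_{i,\ell})=0$ only kills the $h_{j+\ell}$-component of the product (and $\pi_*$ also annihilates $e_{i,j+\ell}$), but this is harmless because your primary disjoint-supports argument via a generic linear space missing $p_i$ is complete on its own.
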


Note that $[E_i] = e_{i,1}$ and $[E_i]^j = {(-1)}^{j-1} e_{i,j} \in A^j(Y)$.

For convenience we abbreviate the divisor classes as $H = h_1$ and $E_i = e_{i,1}$ in $A^1(Y)$;
similarly if there is no possibility of confusion we will abbreviate the curve classes
as $h = h_{r-1}$ and $e_i = e_{i,r-1}$ in $A^{r-1}(Y)$.

If $a$ and $b$ are two classes in complementary dimension,
then we will usually abbreviate their product in $A(Y)$ as $(a\cdot b) = k$
when $(a \cdot b) = k [p]$.

If $\bar{C} \subset \bbP^r$ is a curve of degree $d$
having multiplicity $m_i$ at $p_i$ for each $i$,
and $C \subset Y$ is the proper transform in $Y$,
then the class $[C] \in A^{r-1}(Y)$ is
$
[C] = d h - \sum_i m_i e_{i}
$
which can be easily deduced by intersecting with the basis elements of $A^1(Y)$.
Similarly for divisors, if $D$ is a divisor on $\bbP^r$
having multiplicity $m_i$ at $p_i$ for each $i$,
then the proper transform of $D$ in $Y$ has the class $d H - \sum_i m_i E_i$.

The degrees and multiplicities as recorded in the respective Chow classes
are called the \emph{numerical characters} of the divisor or curve.

\subsection{Rational curves and $(i)$-curves ($i \in \{-1,0,1\}$)}
\label{rationalcurves(i)curves}
Suppose $C$ is a smooth rational curve in $Y=Y^r_s$,
whose class $[C] \in A^{r-1}(Y)$ is $[C]=d h - \sum_i m_i e_i$.
We define the \emph{virtual dimension} of $C$
in terms of these numerical characters to be
\begin{equation}\label{param}
\vdim(C):=(r+1)(d+1) - (r-1)\sum_{i=1}^s m_i - 4 = (r-3)-(C\cdot K_Y)
\end{equation}
which is $\chi$ of the normal bundle of $C$.
(It is also seen to be the dimension count of the number of parameters for a map from $\bbP^1$ to $\bbP^r$ of degree $d$, imposing the multiple points,
and taking into account the automorphisms of $\bbP^1$.)
It is determined by the class of $C$ in $A^{r-1}$.

If we assume that the number of parameters for such a curve $C$ is non-negative
(so that in general we can hope that
such a rational curve $C$ with these multiplicities is expected to exist)
but that it is isolated and is a reduced point in the Hilbert scheme
(it doesn't move in a family, even infinitesimally),
then we are imposing that $H^0 = H^1 = 0$ for the normal bundle,
which is equivalent to having the normal bundle  $N = N_{C/Y}$
split as a direct sum of $\calO(-1)$'s.

If $N$ splits as a direct sum of $\calO(i)$'s for $i \in \{-1,0,1\}$,
then the normal bundle has $H^1=0$ and $\dim H^0 = (r-1)(i+1)$,
so that $\chi(N) = (r-1)(i+1)$
and $(C\cdot K_Y) = (r-3)-(r-1)(i+1) = -2-i(r-1)$.
We call such a curve an \emph{$(i)$-curve}.

Motivated by this, we define a \emph{numerical $(i)$-class} in $A^{r-1}(Y)$
to be a class $c$
such that $(c\cdot K_Y) = -2-i(r-1)$;
this is the $\chi(N)=(r-1)(i+1)$ condition.
We can also write this as
$(r+1)d - (r-1)\sum_i m_i = 2+i(r-1)$.
The class of every $(i)$-curve is a numerical $(i)$-class of course;
the converse is not true in general.

We expect a finite number of smooth rational $(-1)$-curves
representing a given numerical $(-1)$-class.
The prototype of a $(-1)$-curve on $Y$
is the proper transform of a line through two of the points:
such a curve will have the class $h - e_i - e_j$
for two indices $i,j$.

The prototype of a $(0)$-curve is the proper transform of a line through one of the points;
such a curve will have class $h-e_i$ for some index $i$.
The prototype of a $(1)$-curve is a general line in $Y$, with class $h$.


\subsection{Projections}\label{projections}
Projections offer an interesting perspective and tool
to study curves in $Y=Y^r_s$.
If $C$ is an irreducible curve in $\bbP^r$, $r \geq 3$,
with class $c = dh - \sum_i m_i e_i \in A^{r-1}(Y^r_s)$,
then we may consider the projection $\pi$ from any one of the multiple points
(say $p_1$ with multiplicity $m_1$)
and obtain the curve $\pi(C) \subset \bbP^{r-1}$
with class 
$\pi(c) = (d-m_1) h - \sum_{j \geq 2} m_j e_j \in A^{r-2}(Y^{r-1}_{s-1})$.

We've seen above that $\chi(N_{C/Y})$ is determined by the class of $C$,
and it can happen that $\chi(N)$ is non-negative for $C$ but negative for $\pi(C)$.
This is an indication that,
although the $\chi(N)$ computation suggests that a rational curve exists with that class,
the projection is not expected to exist
(and hence the original curve actually shouldn't either).

We can say more in case $C$ is an $(i)$-curve in $\bbP^r$.
In that case, since the normal bundle $N_C$ 
will surject onto the normal bundle for $N_{\pi(C)}$,
we must have that that all of the summands for $N_{\pi(C)}$ have degree at least $i$.
Therefore $\chi(N_{\pi(C)}) \geq (r-2)(i+1)$.

One can then compute that
$\vdim(C)-\vdim(\pi(C)) = d+m_1 + 1 - \sum_{j=2}^s m_j $,
and since this difference is the difference of the $\chi$'s of the normal bundles,
we have that
\[
d+m_1 + 1 -  \sum_{j=2}^s m_j = (r-1)(i+1)-\chi(N_{\pi(C)}) 
\leq (r-1)(i+1)-(r-2)(i+1) = i+1.
\]
This leads to the following,
which we call the {\bf Projection inequality for $(i)$-curves}:

\begin{equation}\label{projectioninequalityicurves}
d+m_1 \leq \sum_{j=2}^s m_j + i
\end{equation}

A bit of algebraic manipulation shows that
if $c = dh-\sum_j m_j e_j$ is a numerical $(i)$-class,
so that $(r+1)d - (r-1)\sum_j m_j = 2+i(r+1)$,
then this projection inequality is equivalent to $d-(r-1)m_1 \geq 1$,
which applied to each multiplicity would imply
\begin{equation}\label{projectioninequalityiclass}
d-(r-1)m_j \geq 1  \;\;\text{or}\;\; m_j \leq \frac{d-1}{r-1}\;\;\text{ for each } j = 1,\ldots s;
\end{equation}
we will see that this is a useful formulation.

We developed this with $r \geq 3$ primarily in mind;
however it is instructive to see what the projection inequality for $(i)$-curves
implies when $r=2$.
For planar $(i)$-curves (studied previously in \cite{DO}),
the condition imposed by the projection inequality is rather mild:
it simply says that $d > m_j$ for each $j$.
Being an $(i)$-curve in the plane corresponds to having the anticanonical degree
(the intersection with $-K$)
equal to $A=i+2$; hence for $i \in \{-1,0,1\}$,
these are curves with anticanonical degree equal to $A=1,2,3$.
If we allow $d=0$, then the only possibility is an exceptional curve $e_j$ itself.
We have the following.

\begin{lemma}\label{planarcase}
For $r=2$ and $i \in \{-1,0,1\}$,
the only $(i)$-curves in $Y^2_s$ that fail the projection inequality
are lines through $1$ or $2$ points, or the exceptional divisors.
There are no curves with negative anticanonical degree that fail the projection inequality.
\end{lemma}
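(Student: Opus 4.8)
The plan is to reduce the assertion to the classical multiplicity bound for an irreducible plane curve, and then to enumerate the finitely many low-degree exceptions directly. First I would make the failure condition explicit: setting $r=2$ in the reformulation \ref{projectioninequalityiclass}, the projection inequality for a class $c = dh - \sum_j m_j e_j$ becomes simply $m_j \le d-1$ for every $j$; hence $c$ \emph{fails} it precisely when $m_k \ge d$ for some index $k$. This turns the statement into a purely numerical condition on the characters $(d;m_1,\dots,m_s)$, to be combined with the geometry of the underlying curve.

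The geometric heart of the argument is the bound coming from irreducibility. If $\bar C = \pi(C)$ is an irreducible plane curve of degree $d$ with multiplicity $m_k$ at $p_k$, then projection from $p_k$ exhibits $\bar C$ as a degree-$(d-m_k)$ cover of $\bbP^1$, and an irreducible curve is contracted by this projection only when it is itself a line through $p_k$. Thus $m_k \le d$ always, with equality forcing $d=1$; in particular, every irreducible curve with $d \ge 2$ satisfies $m_j \le d-1$ for all $j$ and therefore \emph{cannot} fail the projection inequality. This already rules out all degrees $\ge 2$, so only $d \le 1$ can produce failures.

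It then remains to treat $d=1$ and $d=0$ by hand. For $d=1$ the curve $\bar C$ is a line, which by generality of the $p_j$ meets at most two of them; computing the anticanonical degree $A = 3d - \sum_j m_j$ gives the general line $h$ ($A=3$, a $(1)$-curve, with $m_j < d$ for all $j$, so it does \emph{not} fail), the line $h-e_j$ through one point ($A=2$, a $(0)$-curve, failing at $p_j$), and the line $h-e_j-e_k$ through two points ($A=1$, a $(-1)$-curve, failing at $p_j$ and $p_k$). For $d=0$ the only irreducible curves contracted by $\pi$ are the exceptional divisors $e_j$ (with $A=1$, hence $(-1)$-curves), and in the convention $e_j = 0\cdot h - \sum_i m_i e_i$ one has $m_k = 0 \ge 0 = d$ for $k \ne j$, so these fail as well. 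Assembling the list yields exactly the three families in the statement. The final assertion is then immediate: every class that fails has $d \le 1$ and lies among the exceptional divisors or the lines through at least one point, all of which have $A \ge 1 > 0$, so no class of negative anticanonical degree can fail the projection inequality.

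The main obstacle is really only the multiplicity bound of the second paragraph—the single step that uses genuine geometry rather than bookkeeping—together with the care needed to verify that the low-degree list is exhaustive: one must invoke general position to exclude a line through three of the $p_j$, and confirm that $d=0$ admits only exceptional divisors. Everything else is a direct computation with the numerical characters.
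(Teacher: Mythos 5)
Your proof of the first statement is correct and is essentially the paper's own argument: failure of the projection inequality \eqref{projectioninequalityiclass} forces some $m_k \geq d$, the multiplicity bound $m_k \leq d$ for an irreducible plane curve gives $m_k = d$, hence a cone over $p_k$, hence (by irreducibility) a line when $d=1$ or an exceptional divisor when $d=0$; generality of the points then limits a line to passing through at most two of them, yielding exactly the stated list. Your justification of the multiplicity bound via the degree-$(d-m_k)$ projection to $\bbP^1$ is a reasonable elaboration of what the paper leaves implicit, and your enumeration of the $d\leq 1$ cases with their anticanonical degrees matches the paper.

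The gap is in your deduction of the second statement. There, ``curves'' are not assumed irreducible: the paper's proof begins ``Suppose an effective divisor $D \equiv dh-\sum_j m_je_j$ fails the projection inequality,'' so reducible and non-reduced classes are in scope (and they must be, since all the irreducible examples in the first statement visibly have $A \geq 1$). Your concluding claim that ``every class that fails has $d \leq 1$'' rests on the irreducibility invoked in your second paragraph and is false for effective classes: the double line $2h-2e_1-2e_2$ fails the inequality (it has $m_1 = d = 2$) yet has degree $2$, and more generally any cone of degree $d$ over $p_1$ fails. So your enumeration does not exclude negative anticanonical degree for such classes. The repair is exactly the paper's computation: if an effective $D$ of degree $d$ fails, then $m_1 = d$ and $D$ is a cone over $p_1$, each of whose $d$ line components passes through at most one other of the general points, so that $\sum_{j\geq 2} m_j \leq d$ and hence $A = 3d - m_1 - \sum_{j\geq 2} m_j \geq 3d - d - d = d \geq 0$.
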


\begin{proof}
We have discussed the $d=0$ case, leading to the exceptional divisors,
so let us assume $d \geq 1$.
If the projection inequality fails, then one of the multiplicities (say $m_1$)
is at least $d$; and therefore must be equal to $d$.
Hence the $(i)$-curve is a cone, and since it is irreducible it must be a line.
This proves the first statement.

Suppose an effective divisor $D \equiv dh-\sum_j m_je_j$ fails the projection inequality;
then as above the divisor is a cone (over the first point, say),
and if $L_j$ is the line joining $p_1$ to $p_j$ for $j \geq 2$,
we must have $D = \sum_{j \geq 2} m_j L_j$, and $d = m_1 = \sum_{j \geq 2} m_j$.
Therefore $A = 3d-\sum_j m_j$ must equal $d$,
and cannot be negative.

\end{proof}

\section{The standard Cremona transformations}\label{cremona section}

\subsection{The geometric description of the standard Cremona transformation}
We will exploit the standard Cremona transformation of $\bbP^r$
(centered at $r+1$ points) which we describe below.
The reader may also see \cite{DM2} for a bit more detail.

The standard Cremona transformation of $\bbP^r$
(inverting the coordinates,
i.e., sending $[x_0:\cdots:x_r]$ to $[x_0^{-1}:\cdots:x_r^{-1}]$)
is realized geometrically by blowing up the $r+1$ coordinate points,
then all coordinate lines, then all coordinate $2$-planes, etc., until one blows up all coordinate $(r-2)$-planes;
then one blows down the exceptional divisors starting with those over the coordinate lines,
then those over the coordinate $2$-planes, etc.,
finally ending by contracting the proper transforms of the coordinate hyperplanes.

After the first stage of performing the blowups, we arrive at a $r$-fold $\bbX^r$;
this is the space that resolves the indeterminacies of the Cremona on $\bbP^r$.
The subspace in the codimension one part of the Chow ring of $\bbX^r$
generated by the exceptional divisors over the positive-dimensional linear coordinate spaces of $\bbP^r$
is invariant under the Cremona transformation,
and the quotient space inherits the action.
This quotient space is naturally isomorphic to $A^1(Y^r_{r+1})$,
and the action extends (trivially) to an action on $A^1(Y^r_s)$ for any $s \geq r+1$.

Since the $s$ points are general, any set of $r+1$ of them
can be the base points of a corresponding Cremona transformation.
For any subset $I$ of $r+1$ indices,
we will denote by $\phi_I$ the corresponding Cremona transformation,
which induces an action on the codimension one Chow space $A^1(Y^r_s)$.

Dually, the subspace of the curve classes in the Chow ring of $\bbX^r$
spanned by the general line class and the general line classes inside of each $E_i$
is also invariant under the Cremona action.
This subspace is naturally isomorphic to $A^{r-1}(Y^r_{r+1})$,
and therefore we have a Cremona action there, which extends to $A^{r-1}(Y^r_s)$.

\subsection{Formulas for the Cremona action on divisor and curve classes}
We describe these actions explicitly below, and leave the details of checking the formulas to the reader.

\begin{proposition}\label{CremonaAY}
Fix any $(r+1)$-subset $I \subset \{1,2,\ldots, s\}$.
\begin{itemize}
\item[(a)]
The action of $\phi_I$ on $A^1(Y^r_s)$ is given by sending $dH-\sum_i m_i E_i$ to $d'H-\sum_i m_i' E_i$ where
\begin{align*}
d' &= rd - \sum_{j\in I}m_j = d+t_1 \\
m_i' &= (r-1)d - \sum_{j\in I, j\neq i} m_j = m_i + t_1 \;\;\text{for}\;\; i \in I \;\;\text{and}\;\;
m_i' = m_i \;\;\text{for}\;\; i \notin I
\end{align*}
and $t_1 = (r-1)d - \sum_{i \in I} m_i$.  It has order two.
In particular
\[
\phi_I(H) = rH-(r-1)\sum_{i\in I} E_i, \;\;\;
\phi_I(E_j) = H-\sum_{i\in I, i\neq j} E_j \text{ if } j\in I,
\]
and $\phi_I(E_j) = E_j$ if $j \notin I$.

\item[(b)]
The action of $\phi_I$ on $A^{r-1}(Y^r_s)$ is given by sending $dh-\sum_i m_i e_i$ to $d'h-\sum_i m_i' e_i$ where
\begin{align*}
d' &= rd-(r-1)\sum_{i\in I} m_i =d+(r-1)t_{r-1} \\
m_i' &= d - \sum_{j\in I, j \neq i} m_j = m_i + t_{r-1} \;\;\text{for}\;\; i \in I \;\;\text{and}\;\;
m_i' = m_i \;\;\text{for}\;\; i \notin I
\end{align*}
and $t_{r-1} = d - \sum_{i \in I} m_i$.  It has order two.
In particular
\[
\phi_I(h) = rh-\sum_{i\in I} e_i,\;\;\;
\phi_I(e_j) = (r-1)h-\sum_{i\in I, i\neq j} e_j \text{ if } j\in I,
\]
and $\phi_I(e_j) = e_j$ if $j \notin I$.

\item[(c)]
The intersection pairing between $A^1(Y)$ and $A^{r-1}(Y)$ is $\phi_I$-invariant, i.e.,
for any class $D \in A^1(Y)$ and $C \in A^{r-1}(Y)$, we have
\[
(D \cdot C) = (\phi_I(D) \cdot \phi_I(C)).
\]
\end{itemize}
\end{proposition}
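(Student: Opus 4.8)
The plan is to verify the formulas on the generators of each Chow group and then extend by linearity, using that $\phi_I$ acts linearly on each graded piece $A^j(Y^r_s)$ (it is induced by pullback along the birational involution $\sigma$, as identified in the text via the invariant subspaces of $A^\bullet(\bbX^r)$). To fix notation I would take $I=\{1,\dots,r+1\}$, let $p_1,\dots,p_{r+1}$ be the coordinate points of $\bbP^r$, and write $\sigma$ as $[x_0:\cdots:x_r]\mapsto[y_0:\cdots:y_r]$ with $y_k=\prod_{j\neq k}x_j$, so that each defining form has degree $r$.

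For part (a) I would first compute $\phi_I(H)$ by pulling back a general hyperplane $\{\sum_k a_k y_k=0\}$: its preimage is the degree-$r$ hypersurface $\{\sum_k a_k\prod_{j\neq k}x_j=0\}$, and a local computation at $p_i$ (setting $x_i=1$ and treating the remaining coordinates as local parameters) shows that every term except the one indexed by $i$ vanishes to order exactly $r-1$, so the multiplicity at each $p_i$ is $r-1$; this gives $\phi_I(H)=rH-(r-1)\sum_{i\in I}E_i$. For the exceptional generators I would use that $\sigma$ contracts the coordinate hyperplane $V_i=\{x_i=0\}$ to the point $p_i$; since $\sigma$ is an involution, $\phi_I$ carries $E_i$ to the proper transform of $V_i$, and because $V_i$ passes simply through every $p_j$ with $j\in I\setminus\{i\}$ we get $\phi_I(E_i)=H-\sum_{j\in I,\,j\neq i}E_j$ for $i\in I$; for $j\notin I$ the map $\sigma$ is a local isomorphism near $p_j$, so $\phi_I(E_j)=E_j$. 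Extending by linearity and substituting $t_1=(r-1)d-\sum_{i\in I}m_i$ yields the stated formula, and the order-two claim is the identity $\phi_I^2=\mathrm{id}$, checked directly on the three types of generators (or read off from $\sigma^2=\mathrm{id}$).

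For part (b) I would run the dual computation on the curve generators. A general line meets each $V_i$ in one point, so its image under $\sigma$ passes simply through each $p_i$, $i\in I$; since $\sigma$ is given by degree-$r$ forms the image has degree $r$ by B\'ezout, giving $\phi_I(h)=rh-\sum_{i\in I}e_i$. For a line inside $E_j$ (the class $e_j$, $j\in I$) I would use that $\sigma$ induces the standard Cremona transformation on the $\bbP^{r-1}$ of directions at $p_j$ (a short limit computation in the local coordinates above confirms this), which sends a line to a degree-$(r-1)$ rational curve meeting the points $p_i$, $i\in I\setminus\{j\}$, simply; hence $\phi_I(e_j)=(r-1)h-\sum_{i\in I,\,i\neq j}e_i$, while $\phi_I(e_j)=e_j$ for $j\notin I$. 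Linearity and the substitution $t_{r-1}=d-\sum_{i\in I}m_i$ then give the formula. As a useful cross-check, part (b) can also be recovered from (a) together with (c): the adjointness $(\phi_I D\cdot C)=(D\cdot\phi_I C)$ forces the matrix of $\phi_I$ on $A^{r-1}(Y)$ to be the transpose, with respect to the pairing $\langle H,h\rangle=1$, $\langle E_i,e_j\rangle=-\delta_{ij}$, of its matrix on $A^1(Y)$, and transposing the formula from (a) reproduces exactly the multiplicities in (b).

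Finally, for part (c) the conceptual point is the projection formula: because $\sigma$ is a birational involution that becomes an isomorphism in codimension one after passing to the model $\bbX^r$, one has $(\phi_I D\cdot C)=(D\cdot\phi_I C)$ for all $D\in A^1(Y)$ and $C\in A^{r-1}(Y)$, whence with $\phi_I^2=\mathrm{id}$ we get $(\phi_I D\cdot\phi_I C)=(D\cdot\phi_I\phi_I C)=(D\cdot C)$. I expect the main obstacle to lie precisely here: one must justify the projection formula in this birational setting by exhibiting $\bbX^r$ as a smooth model on which $\sigma$ lifts to an honest isomorphism in codimension one permuting the relevant exceptional and proper-transform classes, and confirm that the pairing on $Y^r_s$ is the faithful restriction of the genuine intersection pairing there. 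Once that identification is in place, (c) is immediate and everything else reduces to the bookkeeping above; as a purely computational alternative one may instead verify $(\phi_I a\cdot\phi_I b)=(a\cdot b)$ directly on all pairs of basis elements using the explicit formulas in (a) and (b).
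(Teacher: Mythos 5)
Your proposal is correct, and in fact the paper offers no proof to compare it against: the text explicitly states that it describes the actions and ``leave[s] the details of checking the formulas to the reader,'' so your write-up is precisely the omitted verification. Your computations check out: the order-of-vanishing count at a coordinate point (the term indexed by $i$ vanishes to order $r$, the others to order exactly $r-1$) correctly gives $\phi_I(H)=rH-(r-1)\sum_{i\in I}E_i$; the contraction of $V_i=\{x_i=0\}$ to $p_i$ together with involutivity gives the formula for $\phi_I(E_i)$; and the limit computation in the chart $x_j=1$ does show that $\sigma$ induces the standard Cremona of $\bbP^{r-1}$ on directions at $p_j$, yielding $\phi_I(e_j)=(r-1)h-\sum_{i\in I,\,i\neq j}e_i$ (note the paper's own displayed formulas contain a typo, writing $E_j$ and $e_j$ where $E_i$ and $e_i$ are meant inside the sums). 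Two small points are worth making explicit: first, you work with coordinate points for $I=\{1,\dots,r+1\}$, so one should record that a projectivity moves any $r+1$ general points to the coordinate points and that conjugation by a permutation handles general $I$; second, for part (c) you rightly flag that the projection-formula route through $\bbX^r$ requires justifying that $\sigma$ lifts to an isomorphism in codimension one and that the pairing on $Y^r_s$ restricts faithfully from the model --- but your fallback of verifying $(\phi_I a\cdot\phi_I b)=(a\cdot b)$ directly on the basis pairs $(H,h)$, $(H,e_j)$, $(E_j,h)$, $(E_j,e_k)$ using (a) and (b) is an elementary, airtight finite check (e.g. $(\phi_I H\cdot\phi_I h)=r^2-(r+1)(r-1)=1$), and it sidesteps the geometric subtlety entirely, so nothing is missing.
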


We will abbreviate $\phi = \phi_I$ for $I = \{1,2,\ldots,r+1\}$.

We will call a divisor class $dH-\sum_i m_i E_i$ \emph{Cremona reduced}
if $(r-1)d \geq \sum_{i \in I} m_i$ for every $(r+1)$-subset $I$ of the indices;
by (a) above, this implies that the degree of the class cannot be lowered
by any of the $\phi_I$'s.

Similarly, we call a curve class $dh-\sum_i m_i e_i$ \emph{Cremona reduced}
if $d \geq \sum_{i \in I} m_i$ for every $(r+1)$-subset $I$ of the indices;
by (b) above, this implies that the degree of the class cannot be lowered
by any of the $\phi_I$'s.

Certain other inequalities are common
when considering the degrees and multiplicities for classes of curves in $\bbP^r$.

\begin{lemma}\label{irrednondeg}
Suppose that $s \geq r+1$,
so that the points span $\bbP^r$.
Let $C$ be an irreducible non-degenerate curve in $\bbP^r$,
such that the class of $C$ in $A^{r-1}$ is $dh-\sum_{i=1}^s m_i$.
Then for any subset $J \subset \{1,\ldots,s\}$, if $1 \leq |J| \leq r$, we have
$d \geq \sum_{i \in J} m_i$.
\end{lemma}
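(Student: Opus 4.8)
The plan is to produce a hyperplane containing all the marked points indexed by $J$ and then to bound the degree $d$ by the intersection of $C$ with that hyperplane. First I would observe that, since $|J| \leq r$, the points $\{p_i : i \in J\}$ span a linear subspace $L \subset \bbP^r$ of dimension at most $|J|-1 \leq r-1$; being a proper subspace, $L$ is contained in some hyperplane $\Pi$, so that every $p_i$ with $i \in J$ lies on $\Pi$. (Note that this step requires no general-position hypothesis: any collection of at most $r$ points in $\bbP^r$ lies on a common hyperplane.)

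Next I would invoke the non-degeneracy of $C$: because $C$ is contained in no hyperplane, $C \not\subset \Pi$, and hence the scheme-theoretic intersection $C \cap \Pi$ is zero-dimensional. Applying B\'ezout's theorem to the curve $C$ of degree $d$ and the hyperplane $\Pi$, the total intersection number is
\[
\sum_{p \in C \cap \Pi} I(p, C \cap \Pi) = d,
\]
where $I(p, C \cap \Pi)$ denotes the local intersection multiplicity.

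The key local estimate is that for each $i \in J$ the point $p_i$ contributes at least $m_i$ to this sum. This follows from the standard inequality $I(p_i, C \cap \Pi) \geq \mult_{p_i}(C) \cdot \mult_{p_i}(\Pi)$; since $\Pi$ is smooth we have $\mult_{p_i}(\Pi) = 1$, while $\mult_{p_i}(C) = m_i$ by the definition of the numerical characters, giving $I(p_i, C \cap \Pi) \geq m_i$. Because the points $p_i$ are distinct and all lie in $C \cap \Pi$, summing over $i \in J$ and discarding the remaining (nonnegative) local contributions yields
\[
d = \sum_{p \in C \cap \Pi} I(p, C \cap \Pi) \;\geq\; \sum_{i \in J} I(p_i, C \cap \Pi) \;\geq\; \sum_{i \in J} m_i,
\]
which is the desired inequality.

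I expect the only delicate point to be the justification of the local multiplicity bound $I(p_i, C \cap \Pi) \geq m_i$ and, relatedly, the verification that the intersection is genuinely finite so that B\'ezout applies; both rest on the non-degeneracy of $C$ (guaranteeing $C \not\subset \Pi$) together with the standard fact that a hypersurface through a point $p$ meets a curve there with multiplicity at least the multiplicity of the curve at $p$. Everything else is formal.
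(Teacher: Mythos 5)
Your proof is correct and takes essentially the same approach as the paper: both bound the degree by intersecting $C$ with a hyperplane through the points $\{p_i : i \in J\}$ and using that each $p_i$ contributes at least $m_i$ to the hyperplane section (B\'ezout), the paper merely phrasing it contrapositively (if $d < \sum_{i\in J} m_i$ then $C$ lies in every hyperplane through the points of $J$, hence in the span of $J$, contradicting non-degeneracy) where you argue directly with a single hyperplane. One small remark: your step ``$C \not\subset \Pi$, hence $C \cap \Pi$ is zero-dimensional'' silently uses the irreducibility of $C$ (for a reducible curve a component could lie in $\Pi$), which is precisely where the paper invokes that hypothesis as well.
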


\begin{proof}
If $d < \sum_{i \in J} m_i$, then since $C$ is irreducible
$C$ must be contained in any hyperplane that contains the points of $J$.
Therefore $C$ is contained in the intersection of all such,
which is the span of $J$.
Since $|J| < r+1$, the span of $J$ is a proper linear space,
so $C$ must be degenerate.
\end{proof}

This statement is familiar to those who know that the smallest degree of an irreducible nondegenerate curve in $\bbP^r$ is $r$, and that is achieved only by the rational normal curve; any such can be specified by $s=r+3$ points in general position.
(Here $d=r$ and all $m_i=1$.)

The above Lemma allows us to easily conclude the following.

\begin{proposition}
Suppose that $s \geq r+1$,
so that the points span $\bbP^r$,
and let $I$ be a subset of points with $|I|=r+1$.
If $C$ is an irreducible non-degenerate curve in $\bbP^r$ of degree $d > 1$,
then its image $\phi_I(C)$ after a Cremona transformation based at the points of $I$
is an irreducible curve with non-negative degree and multiplicities.
\end{proposition}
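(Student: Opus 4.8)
The plan is to verify the three assertions in turn: that the multiplicities $m_i'$ of $\phi_I(C)$ are non-negative, that the degree $d'$ is non-negative, and that the image is irreducible. The first two are purely numerical and I would extract them from the transformation formulas of Proposition \ref{CremonaAY}(b) combined with the non-degeneracy bound of Lemma \ref{irrednondeg}; the irreducibility is geometric and I would deduce it from the birational nature of $\phi_I$.

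For the multiplicities, Proposition \ref{CremonaAY}(b) gives $m_i' = m_i \geq 0$ for $i \notin I$, while for $i \in I$ we have $m_i' = d - \sum_{j \in I,\, j \neq i} m_j$. The set $I \setminus \{i\}$ has exactly $r$ elements, so Lemma \ref{irrednondeg} applies with $J = I \setminus \{i\}$ and yields $\sum_{j \in I,\, j \neq i} m_j \leq d$, whence $m_i' \geq 0$. For the degree, I would apply Lemma \ref{irrednondeg} to each of the $r+1$ subsets $I \setminus \{i\}$ and sum the resulting inequalities; since each index $j \in I$ is omitted exactly once, each $m_j$ is counted $r$ times, giving $r \sum_{i \in I} m_i \leq (r+1)d$. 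Substituting this into $d' = rd - (r-1)\sum_{i \in I} m_i$ produces $d' \geq rd - (r-1)\frac{(r+1)d}{r} = \frac{d}{r} > 0$, so the degree is in fact strictly positive.

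For irreducibility I would argue that $\phi_I$ is a birational involution of $\bbP^r$ whose locus of indeterminacy is the union of the codimension-two coordinate subspaces spanned by pairs among the points of $I$, and whose contracted divisors are the coordinate hyperplanes, i.e. the hyperplanes spanned by $r$ of the $r+1$ points of $I$. Since $C$ is non-degenerate it lies in no hyperplane, hence in none of these special linear subspaces; therefore $\phi_I$ restricts to an isomorphism on a dense open subset of $C$. The strict transform $\phi_I(C)$ is then the closure of the image of this open set, and as the closure of the image of an irreducible variety it is irreducible; because $\phi_I$ is birational, $C$ is not contracted and $\phi_I(C)$ remains a curve.

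The two numerical inequalities are routine once the averaging over the $r+1$ subsets $I \setminus \{i\}$ is set up. I expect the only delicate point to be the irreducibility, where one must ensure that $C$ is contained in neither the indeterminacy locus nor any contracted hyperplane; this is exactly where the non-degeneracy hypothesis is essential (the restriction $d > 1$ serving to exclude the degenerate lines through the base points). Note finally that the assumption $s \geq r+1$ is what guarantees the points of $I$ are in general enough position for the standard Cremona $\phi_I$ to be defined in the first place.
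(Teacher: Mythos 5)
Your proof is correct and follows essentially the same route as the paper: non-negativity of the $m_i'$ via Lemma \ref{irrednondeg} applied to the $r$-subsets $I\setminus\{i\}$, positivity of $d'$ by summing those $r+1$ inequalities to get $r\sum_{i\in I}m_i\leq (r+1)d$, and irreducibility because a non-degenerate curve lies in no hyperplane and hence is not contracted by $\phi_I$ (the paper disposes of this last point in one line). One immaterial slip: in $\bbP^r$ the indeterminacy locus of the standard Cremona is the union of \emph{all} coordinate subspaces of codimension at least $2$ (spanned by up to $r-1$ of the points), not only those spanned by pairs, but this does not affect your argument since every such subspace is contained in a contracted coordinate hyperplane, which $C$ avoids by non-degeneracy.
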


\begin{proof}
Since $C$ is non-degenerate and irreducible, it does not live in any hyperplane,
and so is not contracted by the Cremona transformation.
Hence $\phi_I(C)$ is an irreducible curve.
Let us denote its degree and multiplicities by $d'$ and $m_i'$.
The formulas of Proposition \ref{CremonaAY}(b)
show immediately that all the $m_i'$ are non-negative, using Lemma \ref{irrednondeg}.

Of course the degree is non-negative; to see this using the formulas,
note that if we add up the formulas $d \geq \sum_{i\in J} m_i$
for each $r$-subset $J \subset I$, we obtain
$r\sum_{i\in I} m_i \leq (r+1) d$, and so
\[
\sum_{i\in I} m_i \leq \frac{r+1}{r} d < \frac{r}{r-1} d
\]
which is equivalent to $d' > 0$ using the formula for $d'$ in Proposition \ref{CremonaAY}(b).
\end{proof}

\subsection{Cremona Invariants and $(i)$-Weyl lines}\label{bilinear form section}
Note that the symmetric group on the indices of the $s$ points
acts on all these spaces,
and if $\sigma$ is the permutation taking the subset $I$ to the subset $J$,
then $\sigma \phi_I \sigma^{-1} = \phi_J$.
The canonical class $K_Y = -(r+1)H + (r-1)\sum_i E_i$ is the only symmetric $\phi$-invariant class
(up to scalars) in $A^1$ (and it is invariant under all $\phi_I$ therefore).
Dually, the anticanonical curve class $F =(r+1)h - \sum_{i=1}^s e_i $
is the only symmetric $\phi$-invariant class (up to scalars) in $A^{r-1}(Y)$,
and again it is invariant under all $\phi_I$.

The invariance of these classes gives us linear functionals
\[
(-,F)\in (A^1)^* \;\;\text{and}\;\; (-,K) \in (A^{r-1})^*
\]
that are both symmetric and Cremona-invariant.

In addition to these familiar linear functionals, both spaces enjoy a symmetric $\phi$-invariant quadratic form,
which are unique up to scalars also.
These we denote by $q_1$ on $A^1$ and $q_{r-1}$ on $A^{r-1}$, and may be defined using the numerical characters
$(d;\underline{m})$ by
\begin{align}
q_1(d;\underline{m}) &= q_1(dH - \sum_i m_i  E_i) = (r-1)d^2 - \sum_{i=1}^s m_i^2 \label{q1}\\
q_{r-1}(d;\underline{m}) &= q_{r-1}(dh - \sum_i m_i e_i) =d^2 - (r-1)\sum_{i=1}^s m_i^2 \label{qr-1}
\end{align}
Each of these quadratic forms give rise to associated bilinear forms,
which in the case of divisors is (up to scalar multiple) the Dolgachev-Mukai pairing.

We will denote the  bilinear form on $A^{r-1}$ 
derived from the quadratic form 
$q_{r-1}(dh-\sum_i m_i e_i) = d^2-(r-1)\sum_i m_i^2$
by $\langle -,-\rangle$:
$
\langle x, y \rangle = \frac{1}{2}(q_{r-1}(x+y)-q_{r-1}(x)-q_{r-1}(y))
$
which has the following values on the basis elements:
\[
\langle h,h\rangle = 1; \;\;
\langle h,e_i\rangle = 0 
\;\text{and}\;\langle e_i,e_i \rangle = 1-r \;\text{for all}\;i;
\;\; \langle e_i,e_j\rangle =0 \;\text{for}\; i \neq j
\]
It is easy to see that for any $c \in A^{r-1}$, we have
the \emph{anticanonical degree} of $c$ which is equal to
$(-K\cdot c) = \langle F , c\rangle$.
We therefore have that a class $c$ is a numerical $(-1)$-class
if and only if $\langle F, c\rangle = 3-r$
(and in general is a numerical $(i)$-class if and only if
$\langle F, c\rangle = 2+i(r-1)$)

For $i \in \{-1,0,1\}$,
we define an \emph{$(i)$-Weyl line}
to be the image
(under a sequence of standard Cremona transformations)
of a (degree one) line through $1-i$ points.
The class of a $(-1)$-Weyl line is therefore an element of the orbit
(under the group of Cremona transformations)
of the class $h-e_1-e_2$;
similarly the class of a $(0)$-Weyl line is an element of the orbit of $h-e_1$,
and the class of  $(1$)-Weyl line is an element of the orbit of the line class $h$.

Since the linear functional $\langle F, - \rangle$
and the quadratic form $\langle -,-\rangle$
are Cremona-invariant,
any $(i)$-Weyl line must have the same values for these invariants
as the lines through $1-i$ points.
Therefore:
\begin{corollary}\label{iWeyllineinvariants}
If $c$ is the class of an $(i)$-Weyl line in $\bbP^r$, we have
\[
\langle F, c \rangle = 2+i(r-1) \text{ and } \langle c, c \rangle = 1 + (i-1)(r-1).
\]
\end{corollary}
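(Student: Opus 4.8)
The plan is to reduce the statement entirely to the two invariance facts just recorded — that the linear functional $\langle F,-\rangle$ and the bilinear form $\langle -,-\rangle$ are preserved by every standard Cremona transformation — and then to carry out a single base-case computation on each prototype line.

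First I would confirm the invariance under a single $\phi_I$, so that it propagates to arbitrary compositions. For the quadratic invariant this is immediate: since $\langle -,-\rangle$ (equivalently $q_{r-1}$) is $\phi_I$-invariant, we have $\langle \phi_I(c),\phi_I(c)\rangle = \langle c,c\rangle$ for every class $c$. For the linear invariant I would use that $F$ is fixed by every $\phi_I$ (it is the unique symmetric $\phi$-invariant curve class up to scalar), so that $\langle F,\phi_I(c)\rangle = \langle \phi_I(F),\phi_I(c)\rangle = \langle F,c\rangle$, the first equality by $\phi_I(F)=F$ and the second by invariance of the pairing. Since an $(i)$-Weyl line is by definition obtained from a line through $1-i$ points by a finite sequence of standard Cremona transformations, both invariants of $c$ agree with those of the corresponding prototype line.

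It then remains only to evaluate the two invariants on the prototypes, using the tabulated values $\langle h,h\rangle=1$, $\langle h,e_i\rangle=0$, $\langle e_i,e_i\rangle=1-r$, and $\langle e_i,e_j\rangle=0$ for $i\neq j$. For $i=1$ the prototype is $h$; for $i=0$ it is $h-e_1$; for $i=-1$ it is $h-e_1-e_2$. Expanding $\langle F,-\rangle$ and $\langle -,-\rangle$ bilinearly in each case and simplifying yields $\langle F,c\rangle = 2+i(r-1)$ and $\langle c,c\rangle = 1+(i-1)(r-1)$ in all three cases. (As a sanity check on the linear part, each prototype is an $(i)$-curve, hence a numerical $(i)$-class, so $\langle F,c\rangle=2+i(r-1)$ is forced already by the characterization of numerical $(i)$-classes.)

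Since the argument rests entirely on invariance results established earlier, there is no genuine obstacle; the only point requiring care is to confirm that each $\phi_I$ fixes $F$ on the nose, and not merely up to scalar, which is precisely what guarantees that the functional $\langle F,-\rangle$ itself — and not just its kernel — is Cremona-invariant. The three base-case evaluations are routine bilinear expansions.
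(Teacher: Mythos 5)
Your proposal is correct and follows essentially the same route as the paper, which derives the corollary immediately from the Cremona-invariance of $\langle F,-\rangle$ and $\langle -,-\rangle$ together with the values on the prototype lines $h$, $h-e_1$, $h-e_1-e_2$. Your added care in checking that each $\phi_I$ fixes $F$ on the nose (via $t_{r-1}=0$ in Proposition \ref{CremonaAY}(b)) is a sound, if implicit in the paper, detail.
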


\subsection{Projections and Cremona transformations}\label{projectionsCremona}

It is easy to see that projections commute with the Cremona transformation, 
in the following sense (see \cite{DM2}).
Suppose that $I$ is any subset of $r+1$ indices for the $s$ points of $Y^r_s$,
and $i \in I$. 
If we denote the projection from the $i$-th point by $\pi_i$,
then we have
\[
\pi_i(\phi_I(c)) = \phi_{I-\{i\}}(\pi_i(c))
\]
which is elementary and we leave to the reader to check.

It is also the case that the projection inequality (\ref{projectioninequalityicurves})
is preserved under 'increasing' Cremona transformations, in the following sense.

\begin{proposition}\label{cremonaprojection}
Fix $i$, and let $c=dh-\sum m_ke_k$ be a curve class satisfying the inequality  \eqref{projectioninequalityicurves}:
\[
d+m_j \leq \sum_{k\neq j} m_k + i \text{ for every } j \in \{1,\ldots, s\}.
\]
Fix a subset $I\subset \{1,\ldots,s\}$ with $|I|=r+1$,
and write the Cremona transformed class $\phi_I(c)$ as $d'h-\sum_i m_k'e_k$.
If $d' \geq d$, then the class $\phi_I(c)$ also satisfies the inequalities
\[
d'+m_j' \leq \sum_{k\neq j} m_k' + i \text{ for every } j\in \{1,\ldots, s\}
\]
\end{proposition}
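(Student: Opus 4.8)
The plan is to verify the transformed inequalities by direct substitution of the Cremona formulas from Proposition \ref{CremonaAY}(b), exploiting the fact that the projection inequality is equivalent to the simpler per-index condition $d - (r-1)m_j \geq 1$ (equivalently $m_j \leq \frac{d-1}{r-1}$) recorded in \eqref{projectioninequalityiclass}. The key observation is that, for a numerical $(i)$-class, the inequality $d + m_j \leq \sum_{k \neq j} m_k + i$ rearranges to the linear condition $d - (r-1)m_j \geq 1$ after using the defining relation $(r+1)d - (r-1)\sum_k m_k = 2 + i(r-1)$. So rather than track the full sum $\sum_{k\neq j} m_k$, I would reduce everything to checking that the quantity $d' - (r-1)m_j' \geq 1$ for every $j$, which is a cleaner bookkeeping task.

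First I would record the action of $\phi_I$ using the parameter $t_{r-1} = d - \sum_{i\in I} m_i$, so that $d' = d + (r-1)t_{r-1}$, and $m_k' = m_k + t_{r-1}$ for $k \in I$ while $m_k' = m_k$ for $k \notin I$. I would then observe that the hypothesis $d' \geq d$ is precisely the statement $t_{r-1} \geq 0$, since $d' - d = (r-1)t_{r-1}$. This is the crucial sign that drives the whole argument. Next I would split into two cases according to whether the index $j$ lies in $I$ or not, and compute $d' - (r-1)m_j'$ in each.

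For $j \notin I$, we have $m_j' = m_j$, so $d' - (r-1)m_j' = d + (r-1)t_{r-1} - (r-1)m_j = (d - (r-1)m_j) + (r-1)t_{r-1}$; since the first summand is $\geq 1$ by the original projection inequality \eqref{projectioninequalityiclass} and the second is $\geq 0$ because $t_{r-1} \geq 0$, the transformed quantity is $\geq 1$. For $j \in I$, we have $m_j' = m_j + t_{r-1}$, so $d' - (r-1)m_j' = d + (r-1)t_{r-1} - (r-1)m_j - (r-1)t_{r-1} = d - (r-1)m_j \geq 1$, which holds by the original inequality directly and in fact needs no appeal to the sign of $t_{r-1}$ at all. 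In both cases the per-index condition survives, so translating back via the $(i)$-class relation gives the full transformed projection inequality for every $j$.

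The main obstacle, such as it is, is less a computational difficulty than a bookkeeping subtlety: I must confirm that $\phi_I(c)$ remains a numerical $(i)$-class so that the equivalence between the sum-form inequality and the linear form $d' - (r-1)m_j' \geq 1$ still applies after the transformation. This follows because $\langle F, -\rangle$ is Cremona-invariant (as noted in Section \ref{bilinear form section}), so $\langle F, \phi_I(c)\rangle = \langle F, c\rangle = 2 + i(r-1)$ and the defining $(i)$-class relation is preserved. I would state this invariance explicitly at the start to justify moving freely between the two equivalent forms of the inequality; once that is in place the rest is the short two-case substitution above.
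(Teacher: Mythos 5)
There is a genuine gap, and it sits in your very first step. The equivalence you invoke between the sum-form inequality \eqref{projectioninequalityicurves} and the per-index linear condition $d-(r-1)m_j\geq 1$ of \eqref{projectioninequalityiclass} is valid only for numerical $(i)$-classes, i.e., only when the relation $(r+1)d-(r-1)\sum_k m_k = 2+i(r-1)$ holds --- that is exactly how \eqref{projectioninequalityiclass} is derived in Section \ref{projections}. But the proposition is stated for an \emph{arbitrary} curve class $c$ satisfying \eqref{projectioninequalityicurves}; no $(i)$-class relation is assumed, and for general classes the two conditions are genuinely inequivalent. For instance, with $r=3$, $i=-1$, the class $c=2h-\sum_{k=1}^{5}e_k$ satisfies $d+m_j\leq \sum_{k\neq j}m_k+i$ for every $j$ (both sides equal $3$), yet $d-(r-1)m_j=0<1$ for every $j$. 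So you cannot pass from the stated hypothesis to your linear form. You correctly flagged that $\phi_I(c)$ must remain a numerical $(i)$-class for the translation back (and Cremona-invariance of $\langle F,-\rangle$ would indeed give that), but you overlooked that $c$ itself need not be one: as written, your argument proves the proposition only under the additional, unstated hypothesis $\langle F,c\rangle = 2+i(r-1)$.

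The repair is immediate, and it is what the paper's own proof does: run your two-case analysis on the sum-form inequality itself, with no translation. Setting $t=d-\sum_{\ell\in I}m_\ell$ as in Proposition \ref{CremonaAY}(b), your (correct) observation that $d'\geq d$ forces $t\geq 0$ still drives everything: if $j\in I$, both sides of $d+m_j\leq\sum_{k\neq j}m_k+i$ increase by exactly $rt$ under $\phi_I$ (so preservation needs no sign condition, matching your remark in the $j\in I$ case); if $j\notin I$, the left side increases by $(r-1)t$ while the right side increases by $(r+1)t$, and $t\geq 0$ gives preservation. This is the same case split and essentially the same bookkeeping as yours, but applied to the quantity the proposition actually hypothesizes, so it covers arbitrary classes. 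In the paper's applications (e.g., Corollary \ref{projinequalityalliWeyllines}) your extra hypothesis happens to hold, so your argument would suffice there, but the proposition as stated is stronger and your proof does not reach it.
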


\begin{proof}
From Proposition \ref{CremonaAY}(b) if we set $t=d-\sum_{\ell \in I}m_\ell$,
we have $d'=d+(r-1)t$, 
and for any $k\in I$ we have $m'_k=m_k+t$;
for $k \notin I$, $m_k'=m_k$.
Our assumptions imply that $t \geq 0$.
We distinguish two cases.
If $j\in I$, then both sides of the projection inequality
change by adding $rt$, and hence it is preserved.

If $j \notin I$, then the left-hand side of the projection inequality changes by adding $(r-1)t$,
while the right-hand side changes by adding $(r+1)t$.
Since $t \geq 0$, the inequality is again preserved.
	
\end{proof}

We note that the projection inequality for numerical $(i)$-classes
\eqref{projectioninequalityiclass}
can be re-written in terms of the bilinear form, as follows:
\begin{equation}\label{bilinearprojinequality}
\langle C,h-e_j\rangle \geq 1 \;\;\text{ for each }\;\; j = 1,\ldots,s.
\end{equation}

\subsection{Irreducible and Non-degenerate curves}

Now let $C$ be an irreducible non-degenerate $(-1)$-curve in $\bbP^r$
whose class in $A^{r-1}$ is $c=dh-\sum_{i=1}^s m_i$,
and order the points so that $m_i \geq m_{i+1}$ for all $i < s$.

Let us also assume that $C$ is in the orbit of 
the line through two points under the Weyl group.
In that case there is a Cremona transformation taking the line through two points to $C$,
and that Cremona transformation $w$ will take the class $h-e_1-e_2$
to the class $c$ of $C$.
Conversely, if $c$ is a class in the orbit of $h-e_1-e_2$,
then there is an element $w$ in the Weyl group
taking the line through two points to a curve $C$ whose class is $c$.
(Just take $w$ to be the Weyl group element that takes $h-e_1-e_2$ to $c$.)
The prototype of such a class is the class of the rational normal curve of degree $r$
passing through $r+3$ general points of $\bbP^r$;
its class can be written as $(r;1^{r+3},0^{s-r-3})$.

\begin{lemma}\label{numericallemma}
Let $c$ be a class in $A^{r-1}(Y^r_s)$ with positive degree $d$ and non-negative multiplicities in non-decreasing order, such that $d \geq m_1$.
Assume that $c$ has the two numerical invariants of a $(-1)$-Weyl line class:
$\langle F,c\rangle = 3-r$ and $\langle c,c\rangle = 3-2r$.
\begin{itemize}
\item[(a)] If $d=1$, then $c = h-e_1-e_2$:
$c$ is the class of a line through two points.
\item[(b)] If $d > 1$, then $d \geq r$.
Moreover if $d = r$ the $c = dh-\sum_{1\leq j\leq r+3} e_j$:
$c$ is the class of a rational normal curve of degree $r$ through $r+3$ general points.
\item[(c)] If $d \geq r$
then $m_{r+3}\neq 0$:
there are at least $r+3$ strictly positive multiplicities.
\item[(d)] Suppose that $c$ is actually a $(-1)$-Weyl line class,
i.e., it is in the orbit of $h-e_1-e_2$.
Then $d \geq \sum_{i\leq r} m_i$.
\item[(e)] If $c$ is a $(-1)$-Weyl line class
and $c$ is not Cremona reduced (i.e., $d < \sum_{i=1}^{r+1} m_i$),
then after applying the standard Cremona transformation $\phi$ 
to the $r+1$ largest multiplicities, 
we obtain a class with positive degree and non-negative multiplicities.
\end{itemize}
\end{lemma}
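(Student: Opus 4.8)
The plan is to apply the Cremona formulas of Proposition \ref{CremonaAY}(b) with $I = \{1,\ldots,r+1\}$ and to verify the two conclusions — non-negativity of the new multiplicities and positivity of the new degree — separately, both resting on the inequality $d \geq \sum_{i\leq r} m_i$ established in part (d). Throughout I work in the regime $d>1$: the case $d=1$ is settled by part (a), which forces $c = h-e_1-e_2$, the terminal class of the reduction that we do not reduce further, and by part (b) any non-line Weyl class has $d \geq r$, so part (d) is genuinely available. Writing $t = d - \sum_{i\in I} m_i$, the hypothesis that $c$ is not Cremona reduced means precisely $t<0$, and Proposition \ref{CremonaAY}(b) gives $\phi(c) = d'h - \sum_k m_k' e_k$ with $d' = d + (r-1)t$, with $m_k' = m_k + t$ for $k\in I$, and $m_k' = m_k$ otherwise.

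For the multiplicities, the only ones that can decrease are those indexed by $I$; since the $m_k$ are non-increasing and the same $t$ is added to each, the smallest among $\{m_k' : k \in I\}$ is $m_{r+1}' = m_{r+1} + t = d - \sum_{i=1}^{r} m_i$. This is non-negative exactly by part (d), so all $m_k' \geq 0$.

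The positive degree is the crux. The key observation is that part (d) supplies only the single bound $\sum_{i=1}^r m_i \leq d$, but combined with the non-increasing ordering it upgrades to $\sum_{i\in J} m_i \leq d$ for \emph{every} $r$-element subset $J \subseteq I$: any such $J$ is obtained by deleting one index from $I$, so its multiplicity sum is largest when the deleted index is $r+1$, giving at most $\sum_{i=1}^r m_i \leq d$. Averaging these $r+1$ inequalities (each index of $I$ is omitted once, hence appears in exactly $r$ of them) yields $r\sum_{i\in I} m_i \leq (r+1)d$, so $\sum_{i\in I} m_i \leq \tfrac{(r+1)d}{r}$. Substituting into the degree formula gives
\[
d' = rd - (r-1)\sum_{i\in I} m_i \;\geq\; rd - \frac{(r-1)(r+1)}{r}\,d \;=\; \frac{d}{r} \;>\; 0,
\]
which completes the argument.

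The main obstacle I anticipate is precisely this positivity of $d'$: non-negativity of the multiplicities is essentially immediate from part (d), whereas controlling the degree requires promoting (d)'s single top-$r$ bound to the full family of $r$-subset bounds on $I$ and then averaging, exactly as in the proof above that an irreducible non-degenerate curve of degree $d>1$ has Cremona image of positive degree. One should also verify that the restriction $d>1$ is essential and not merely convenient: for the degree-one line $h-e_1-e_2$ the inequality of part (d) fails and $\phi$ in fact produces a class of negative degree, so the hypothesis $d>1$ (equivalently $d\geq r$) cannot be dropped.
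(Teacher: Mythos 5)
Your proposal proves only part (e) of the lemma: parts (a)--(d) are invoked as established facts but are never argued, and they are not free. Part (a) needs the linear invariant to force $\sum_i m_i = 2$ with each $m_i \leq d = 1$; part (b) needs rewriting the two invariants as $(r-1)\sum_i m_i = d(r+1)-(3-r)$ and $(r-1)\sum_i m_i^2 = d^2-(3-2r)$ and comparing $\sum m_i \leq \sum m_i^2$; part (c) needs a Cauchy--Schwarz estimate on the number of nonzero multiplicities; and part (d) is not a numerical consequence of the invariants at all --- the paper derives it from (c) (which forces the curve through at least $r+3$ of the general points, hence non-degenerate) together with irreducibility of a Weyl line and Lemma \ref{irrednondeg}. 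Since the statement to be proved is the full five-part lemma, omitting these is a genuine gap, and the geometric input behind (d) is precisely the one idea your otherwise purely formal plan never supplies.

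For part (e) itself your argument is correct, and the degree-positivity step genuinely differs from the paper's. The multiplicity check is identical: the smallest new multiplicity is $m_{r+1}+t = d-\sum_{i\leq r} m_i \geq 0$ by (d). For $d'>0$ the paper argues geometrically that $\phi(C)$ is again an irreducible curve, offering only parenthetically the decomposition $d' = (r-1)(d-\sum_{i\leq r} m_i) + (d-(r-1)m_{r+1})$, whose second term is controlled by another appeal to Lemma \ref{irrednondeg}; you instead upgrade (d), via the ordering, to the bound $d \geq \sum_{i\in J} m_i$ for every $r$-subset $J$ of $I$ and average over the $r+1$ such subsets, obtaining $\sum_{i\in I} m_i \leq (r+1)d/r$ and hence $d' \geq d/r > 0$. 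This transplants the averaging trick from the paper's proposition on Cremona images of irreducible non-degenerate curves but grounds it entirely in (d), so once (d) is known the step is arithmetically self-contained --- a small but real simplification over the paper's route. Your closing caveat is also apt: (e) as literally stated fails for $h-e_1-e_2$, whose image under $\phi$ has degree $2-r$, so the restriction $d>1$, implicit in the paper through the chain (b) $\Rightarrow$ (c) $\Rightarrow$ (d), is indeed essential and deserves the explicit mention you give it.
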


\begin{proof}

To see (a),
if $d=1$, $\langle F,c\rangle=3-r$ says $(r-1)\sum_i m_i = 2r-2$,
so that the sum of the multiplicities is two. 
Since the multiplicities are at most the degree,
we must have two of them equal to one and all other equal to zero.

Statement (b) follows from re-writing the two invariants, as
\[
(r-1)\sum_i m_i = d(r+1)-(3-r) \;\text{ and }\;
(r-1)\sum_i m_i^2 = d^2-(3-2r).
\]
Since all multiplicities are non-negative, we have that their sum 
is not larger than the sum of their squares.  Hence we conclude that
$
d(r+1)-(3-r) =dr+d+r-3 \leq d^2-3+2r
$
which we can re-write as $(d-1)r \leq d^2-d=d(d-1)$,
from which we conclude that $r \leq d$.

If $d=r$, we see that in the argument above
the inequalities above must all be equalities,
and in particular $\sum m_i = \sum m_i^2$, which forces all nonzero $m_i$
to be equal to one, and gives the rational normal curve class.

To show (c), assume that there are exactly $k$ strictly positive multiplicities.
In that case we can apply the Cauchy-Schwarz inequality to conclude that
$(\sum m_i)^2 \leq k\sum m_i^2$.
We then see that
\[
(\frac{rd+d+r-3}{r-1})^2 = (\sum m_i)^2 \leq k\sum m_i^2 = k\frac{d^2+2r-3}{r-1}
\]
using the two invariants.
We want to prove that $k \ge r+3$, so it suffices therefore to show that
\[
r+3 \leq \frac{(rd+d+r-3)^2}{(r-1)(d^2+2r-3)}
\]
The denominators here are all positive, so clearing them,
expanding, simplifying, and factoring, this inequality is equivalent to
\[
(d-r)(4d+2r^2-6) \geq 0
\]
which is true since $d \geq r$, and proves (c).


If $c$ is the class of a $(-1)$-Weyl line $C$,
then by (c), we have that $C$ is non-degenerate;
it passes through at least $r+1$ points in general position.
Since $C$ is irreducible, Lemma \ref{irrednondeg} implies (d).

If in addition $c$ is not Cremona reduced,
then applying $\phi$ to $c$,
and setting $t = d-\sum_{j\leq r+1} m_j$, we obtain $c'=d'h-\sum m_i'$
where for $i \leq r+1$, $m_i' = m_i+t = d-\sum_{j \leq r+1, j\neq i} m_j$
using the formulas of Proposition \ref{CremonaAY}(b).
This quantity is non-negative by part (d).
Hence all multiplicities for $c'$ are non-negative,
and since $c'$ also represents an irreducible curve (the image of the curve $C$),
it must have positive degree also.
(Positive degree also follows easily now from observing that
$d' = rd-(r-1)\sum_{i\leq r+1} m_i
= (r-1)(d-\sum_{i\leq r} m_i) + (d-(r-1)m_{r+1})$;
the first quantity is non-negative 
and the second is strictly positive using Lemma \ref{irrednondeg}.)
This proves (e).

\end{proof}

Part (e) of Lemma \ref{numericallemma} will allow us to apply an inductive argument
to reduce $(-1)$-Weyl line classes via standard Cremona transformations,
as long as we can assure that the classes are not Cremona reduced.
For this we use a more sophisticated analysis,
relying on Coxeter systems and the properties of the Weyl group.

\section{The Weyl Group}\label{weyl section}

\subsection{Coxeter Systems and Groups of type $T_{2pq}$}
Fix $a$, $b$, and $c$ at least $2$.
Define the graph $T_{abc}$ with $a+b+c-2$ vertices,
and consisting of three chains
(of lengths $a$, $b$, and $c$) with the final vertices identified.
Hence there is one vertex of degree $3$ (the identified final vertex),
three vertices of degree $1$ (the other ends of the three chains)
and all other vertices have degree $2$.

For our purposes in studying the space $Y^r_s$,
we will have need of the case $a=2$, $b=r+1$, and $c = s-r-1$.
The total number of vertices is $s$.
We label them as follows, from $0$ to $s-1$:
\[
\begin{matrix}
(1)  -  (2)  -  \cdots  -  (r)  - &  (r+1) & -  (r+2)  \cdots  -  (s-1) \\
     &     |           &    \\
     &     (0)       &     \\
\end{matrix}
\]
For the basics of the Coxeter theory that applies to this situation,
we refer the reader to \cite{Bo} and \cite{Humphreys}.
The graph $T_{2,r+1,s-r-1}$ defines the Coxeter system
and the Coxeter group $W$
generated by $w_i$, $0 \leq i \leq s-1$,
with relations
$w_i^2=1$,
$w_iw_j = w_jw_i$ if $i$ and $j$ are not connected with an edge in the $T_{2,r+1,s-r-1}$ graph,
and $w_iw_jw_i = w_jw_iw_j$ if $i$ and $j$ are connected with an edge.

Let $E$ be the real vector space with basis $\{x_i\;|\; 0\leq i \leq s-1\}$.
Define the symmetric bilinear form $B$ on $E$ by
\[
B(x_i,x_j) = \begin{cases}
1 & \text{if}\;\; i=j \\
0 & \text{if $i$ and $j$ are not connected with an edge} \\
-1/2 & \text{if $i$ and $j$ are connected with an edge}.
\end{cases}
\]
Define the reflection $\sigma_i$ on $E$ by $\sigma_i(x) = x - 2B(x,x_i)x_i$.
For each $i$, $\sigma_i$ preserves the bilinear form $B$: $B(x,y) = B(\sigma_i(x),\sigma_i(y))$.
The assignment $w_i \in W$ to $\sigma_i$ gives an injective group homomorphism $\sigma:W \to GL(E)$
and all elements of $\sigma(W)$ preserve the bilinear form.

This representation of the Coxeter group is called the \emph{standard geometric representation}.

The Coxeter group $W$ also acts on the dual space $E^*$
via the induced action: if $f\in E^*$ is a functional on $E$, and $w \in W$,
then $(wf)$ is the functional defined by $(wf)(x) = f(w^{-1}(x))$.
The \emph{positive chamber} $\mathcal{C}$ is defined as the set of $f \in E^*$
such that $f(x_i) > 0$ for each $i = 0,\ldots, s-1$.
$\mathcal{C}$ is an open simplicial cone in $E^*$,
and no two elements of $\mathcal{C}$ can be in the same orbit of the Weyl group:
if $w \in W$ and $\mathcal{C} \cap w(\mathcal{C}) \neq \emptyset$ then $w=1$.
A reference for this material is \cite{Bo}, Chapter V, Section 4.

This `positive chamber' $\mathcal{C}$ has the set $\bar{\mathcal{C}}$ as its closure,
defined by the set of $f \in E^*$ such that $f(x_i) \geq 0$ for each $i$.
This set may be partitioned as follows:
for each subset $X$ of the basis vectors $\{x_i\}$,
we define $\mathcal{C}_X$ as the set of $f \in E^*$ such that
$f(x_i) = 0$ for $x_i \in X$, and $f(x_i) > 0$ for $x_i \notin X$.
We have $\mathcal{C}_\emptyset = \mathcal{C}$,
and $\bar{\mathcal{C}}$ is the disjoint union of all $\mathcal{C}_X$
as $X$ ranges over all subsets of the set of generators.

For such a subset $X$, define $W_X$ to be the subgroup of the Weyl group $W$
generated by the elements of $X$.
Proposition 5 of \cite{Bo}, Chapter V, Section 4, No. 6 gives the following:

\begin{proposition}\label{CXprop}
Suppose $X$ and $X'$ are subsets of the basis vectors $\{x_i\}$,
and $w$ is in the Weyl group $W$.
If $w(\mathcal{C}_X) \cap \mathcal{C}_{X'} \neq \emptyset$,
then $X=X'$,  $w \in W_X$, and $w(\mathcal{C}_X) = \mathcal{C}_{X}$.
In particular the stabilizer of any $f \in \mathcal{C}_X$ is $W_X$.
\end{proposition}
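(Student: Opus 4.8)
The plan is to prove the Proposition by induction on the length $\ell(w)$ of $w$ as a word in the Coxeter generators $w_0,\ldots,w_{s-1}$, exploiting the standard correspondence between the length function and the sign of roots in the geometric representation (see \cite{Humphreys}, \S5.7, or \cite{Bo}). Throughout I would work with the induced action on $E^*$, recording that each generator acts by $(w_i f)(x) = f(x - 2B(x,x_i)x_i)$, so that $w_i$ fixes every functional $f$ with $f(x_i)=0$. For the base case $\ell(w)=0$, i.e. $w=1$, the hypothesis reads $\mathcal{C}_X \cap \mathcal{C}_{X'} \neq \emptyset$; since $\bar{\mathcal{C}}$ is the disjoint union of the faces $\mathcal{C}_Y$, this forces $X=X'$, and the remaining conclusions $w \in W_X$ and $w(\mathcal{C}_X)=\mathcal{C}_X$ hold trivially.

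For the inductive step, fix $f \in \mathcal{C}_X$ with $w(f) \in \mathcal{C}_{X'}$ and choose a generator $w_i$ with $\ell(w_i w) < \ell(w)$. The length-root correspondence then gives that $w^{-1}(x_i)$ is a negative root, i.e. $w^{-1}(x_i) = -\sum_j c_j x_j$ with all $c_j \geq 0$. Evaluating, $w(f)(x_i) = f(w^{-1}(x_i)) = -\sum_j c_j f(x_j) \leq 0$, because $f \in \bar{\mathcal{C}}$ has $f(x_j) \geq 0$. On the other hand $w(f) \in \mathcal{C}_{X'} \subset \bar{\mathcal{C}}$ gives $w(f)(x_i) \geq 0$; hence $w(f)(x_i)=0$, which says $x_i \in X'$. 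Because $w(f)$ vanishes on $x_i$, the generator $w_i$ fixes $w(f)$, so $(w_i w)(f) = w_i(w(f)) = w(f) \in \mathcal{C}_{X'}$. Since $\ell(w_i w) < \ell(w)$, the induction hypothesis applies to $w_i w$ and yields $X=X'$, $w_i w \in W_X$, and $(w_i w)(\mathcal{C}_X)=\mathcal{C}_X$.

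It remains to upgrade these conclusions from $w_i w$ back to $w$. Since $x_i \in X'=X$, the generator $w_i$ lies in $W_X$, so $w = w_i(w_i w) \in W_X$; and because every element of $\mathcal{C}_X$ vanishes on $x_i$, $w_i$ fixes $\mathcal{C}_X$ pointwise, giving $w(\mathcal{C}_X) = w_i(w_i w)(\mathcal{C}_X) = w_i(\mathcal{C}_X) = \mathcal{C}_X$. Finally, the stabilizer statement follows by taking $X'=X$: if $w(f)=f$ for some $f \in \mathcal{C}_X$ then the main assertion gives $w \in W_X$, while conversely each generator $w_k$ with $x_k \in X$ fixes $f$ (as $f(x_k)=0$), so $W_X \subseteq \mathrm{Stab}(f)$. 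I expect the one genuinely delicate ingredient to be the length-root correspondence $\ell(w_i w) < \ell(w) \Rightarrow w^{-1}(x_i) < 0$: this is the part that is not merely bookkeeping, and I would either invoke it directly from \cite{Bo} or \cite{Humphreys} or else establish it by a preliminary induction on length. The sign manipulation that turns the two opposite inequalities $w(f)(x_i) \leq 0$ and $w(f)(x_i) \geq 0$ into the crucial equality is the conceptual crux, but it is short once the root-sign fact is in hand.
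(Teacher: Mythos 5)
Your proof is correct. Note that the paper does not actually prove this proposition: it is quoted directly from Bourbaki (Ch.~V, \S 4, No.~6, Prop.~5), and your induction on $\ell(w)$ --- picking a generator $w_i$ with $\ell(w_i w) < \ell(w)$, using the length--root-sign correspondence to get $w(f)(x_i) \leq 0$, hence $w(f)(x_i) = 0$ and $x_i \in X'$, so that $w_i$ fixes $w(f)$ and the induction hypothesis applies to $w_i w$ --- is precisely the standard argument from that reference (it is also the proof of the fundamental-domain theorem in Humphreys, \S 5.13). The one ingredient you defer, that $\ell(w_i w) < \ell(w)$ forces $w^{-1}(x_i)$ to be a negative root, is exactly the right nontrivial input and is legitimately citable as stated; as a small bonus, your argument in fact establishes that $W_X$ fixes $\mathcal{C}_X$ pointwise (each generator $w_k$ with $x_k \in X$ fixes every $f \in \mathcal{C}_X$ since $f(x_k)=0$), which is the sharper form the paper later relies on in Lemma \ref{twoCremonareducedclasses}.
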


The union of all of the translates $w(\bar{\mathcal{C}})$ as $w$ ranges over the Weyl group
is the \emph{Tits cone} in $E^*$.
It is a convex cone in $E^*$,
and the non-negative cone $\bar{\mathcal{C}}$ is a fundamental domain for the action of $W$ on the Tits cone.
The general theory gives:

\begin{theorem}\label{FiniteWeyl}
The following are equivalent:
\begin{itemize}
\item The Weyl group is finite.
\item The Tits cone is the entire dual space $E^*$.
\item The bilinear form $B$ is positive definite.
\end{itemize}
\end{theorem}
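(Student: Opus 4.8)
The plan is to prove the three conditions equivalent by splitting into the two separate equivalences (iii)$\Leftrightarrow$(i) (positive-definiteness $\Leftrightarrow$ finiteness) and (i)$\Leftrightarrow$(ii) (finiteness $\Leftrightarrow$ the Tits cone fills $E^*$). Throughout, whenever $B$ is positive definite I would use it to identify $E$ with $E^*$ isometrically. Since every $\sigma_i$, and hence all of $\sigma(W)$, preserves $B$, the contragredient action on $E^*$ is carried to the given orthogonal action on $E$ under this identification; moreover the functional $x\mapsto B(x,x_i)$ corresponds to $x_i$, so that the defining inequalities $f(x_i)\ge 0$ of $\bar{\mathcal{C}}$ read $B(\hat f,x_i)\ge 0$. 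This dictionary is what lets the statements about $\mathcal{C}\subseteq E^*$ be read off from the Euclidean geometry of $E$.

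For (iii)$\Rightarrow$(i) I would argue that a discrete subgroup of a compact group is finite. If $B$ is positive definite then $O(E,B)\cong O(s)$ is compact and $\sigma(W)$ is a subgroup of it. To see $\sigma(W)$ is discrete it suffices to check that the identity is isolated. Fix $f_0\in\mathcal{C}$; the excerpt records that $w(\mathcal{C})\cap\mathcal{C}=\emptyset$ for every $w\neq 1$. If there were $w_n\neq 1$ with $\sigma(w_n)\to\mathrm{id}$, then $\sigma(w_n)f_0\to f_0\in\mathcal{C}$, and since $\mathcal{C}$ is open we would eventually have $\sigma(w_n)f_0\in\mathcal{C}\cap w_n(\mathcal{C})$, forcing $w_n=1$, a contradiction. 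Hence $\sigma(W)$ is a discrete subgroup of the compact group $O(E,B)$, so it is finite, and since $\sigma$ is injective $W$ is finite.

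The concrete heart of the argument is (i)$\Rightarrow$(iii). Assuming $W$ finite, I would average an arbitrary inner product over $W$ to obtain a $W$-invariant positive definite symmetric form $B'$ on $E$. For each $i$ the reflection $\sigma_i$ fixes the hyperplane $x_i^{\perp_B}$ pointwise and sends $x_i\mapsto -x_i$, and (using $B(x_i,x_i)=1$) these give the full eigenspace decomposition $E=\mathbb{R}x_i\oplus x_i^{\perp_B}$ into the $(-1)$- and $(+1)$-eigenspaces of $\sigma_i$. But $\sigma_i$ is also a $B'$-isometric involution, so its eigenspaces are $B'$-orthogonal; comparing, $\mathbb{R}x_i\perp_{B'}x_i^{\perp_B}$, i.e. $x_i^{\perp_B}=x_i^{\perp_{B'}}$. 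Two linear functionals with the same kernel hyperplane are proportional, so $B(x_i,\cdot)=\lambda_i\,B'(x_i,\cdot)$ with $\lambda_i=1/B'(x_i,x_i)>0$. For an edge $\{i,j\}$ of $T_{2,r+1,s-r-1}$ one has $B(x_i,x_j)=-\tfrac12\neq 0$, hence $B'(x_i,x_j)\neq 0$, and symmetry of both forms forces $\lambda_i=\lambda_j$. Because the graph is connected, all $\lambda_i$ equal a common $\lambda>0$, whence $B=\lambda B'$ on the basis $\{x_i\}$ and therefore everywhere; thus $B$ is a positive multiple of a positive definite form, hence positive definite.

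It remains to prove (i)$\Leftrightarrow$(ii). For (i)$\Rightarrow$(ii), having already shown $B$ positive definite, I would take any $f\in E^*$ and, among its finite orbit $Wf$, choose $g$ maximizing $\langle g,\rho\rangle_*$ for a fixed $\rho\in\mathcal{C}$, where $\langle\,,\rangle_*$ is the $W$-invariant inner product on $E^*$ dual to $B$. Invariance gives $\langle g,\rho\rangle_*\ge\langle\sigma_i g,\rho\rangle_*=\langle g,\sigma_i\rho\rangle_*$, and a direct computation of the contragredient action yields $\rho-\sigma_i\rho=2\rho(x_i)\,B(\cdot,x_i)$; via the dictionary above this reads $0\le\langle g,\rho-\sigma_i\rho\rangle_*=2\rho(x_i)\,g(x_i)$, so $g(x_i)\ge 0$ for all $i$, i.e. $g\in\bar{\mathcal{C}}$ and $f\in W\bar{\mathcal{C}}=U$. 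Hence $U=E^*$. I expect the reverse implication (ii)$\Rightarrow$(i) to be the main obstacle, since it is the step least reducible to the elementary arguments above; the cleanest route is the contrapositive, showing an infinite $W$ has a proper Tits cone. Here I would invoke the standard fact (Bourbaki, Ch.~V) that the reflection arrangement is locally finite in the interior of $U$: if $U=E^*$ then the origin is interior, yet all the walls $\{f:f(x_i)=0\}$ together with their $W$-translates pass through the origin, so local finiteness forces only finitely many walls, hence finitely many reflections and chambers, so $W$ is finite. The delicate points to get right are precisely this appeal to local finiteness of the arrangement (the one piece of Tits-cone theory not reproved here) and, in (i)$\Rightarrow$(iii), the propagation of the correct positive scalar $\lambda$ across the connected diagram.
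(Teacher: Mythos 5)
Your proof is essentially correct, but note that the paper itself offers no proof of this theorem: it is stated as a quotation from the general theory of Coxeter systems (``The general theory gives''), with Bourbaki, Ch.~V, \S 4 and Humphreys as the standing references. So the real comparison is between your assembled argument and the classical cited one. Your route is sound at every checkable step: (iii)$\Rightarrow$(i) via discreteness of $\sigma(W)$ inside the compact group $O(E,B)$, using the paper's stated fact that $\mathcal{C}\cap w(\mathcal{C})\neq\emptyset$ forces $w=1$ and the injectivity of $\sigma$; (i)$\Rightarrow$(iii) via averaging to an invariant inner product $B'$ and propagating the constants $\lambda_i = 1/B'(x_i,x_i)$ along edges, where you correctly use both $B(x_i,x_j)=-\tfrac12\neq 0$ on edges and the connectivity of $T_{2,r+1,s-r-1}$ (a hypothesis that genuinely holds for this graph but would need per-component treatment for a general Coxeter system); and (i)$\Rightarrow$(ii) via orbit maximization, where the contragredient computation $\rho-\sigma_i\rho = 2\rho(x_i)B(\cdot,x_i)$ and the identity $\langle g, B(\cdot,x_i)\rangle_* = g(x_i)$ under the $B$-identification are both right. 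The one step that is not self-contained is (ii)$\Rightarrow$(i), which you candidly flag: local finiteness of the mirror arrangement in the interior of the Tits cone is true and standard, but it is really the cornerstone of Vinberg's theory of linear reflection groups and appears in Bourbaki, Ch.~V, \S 4 only implicitly (partly in the exercises), so the attribution should be sharpened; granted that fact, your conclusion is valid, since every wall is a linear hyperplane through $0$, so $U=E^*$ forces finitely many walls, hence finitely many reflections, bounded length, and finite $W$ (via the standard fact that $w$ is determined by its inversion set). An equivalent and slightly cleaner endgame, if you prefer a single citation: interior points of the Tits cone have finite stabilizers, and the origin, which always lies in $\bar{\mathcal{C}}\subset U$ and is stabilized by all of $W$, would be interior if $U=E^*$. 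What your approach buys over the paper's is a mostly self-contained proof with exactly one imported fact; what the paper's citation buys is, of course, brevity and the full generality of the Bourbaki--Humphreys treatment.
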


The following is a useful observation.

\begin{proposition}\label{TitsAlgorithm}
Suppose that $g$ is a functional in the Tits cone,
so that some translate $w \in W$ sends $g$ into the non-negative cone $\bar{\mathcal{C}}$.
One can construct the Weyl group element $w$ that sends $g$ into $\bar{\mathcal{C}}$
with the following algorithm.
We may assume $g \notin \bar{\mathcal{C}}$, else take $w=1$.
Set $g_1 = g$, and set $k=1$.
\begin{itemize}
\item[(a)] If $g_k \in \bar{\mathcal{C}}$, stop.
\item[(b)] If $g_k \notin \bar{\mathcal{C}}$,
then choose any generator $x_{i_k}$ for $W$ such that $g_k(x_{i_k}) < 0$.
Set $g_{k+1} = x_{i_k}(g_k)$.
\item[(c)] Increment $k$ and go to step (a).
\end{itemize}
This process stops in a finite number $K$ of steps,
and $w = x_{i_{K}}x_{i_{K-1}}\cdots x_{i_2}x_{i_1}$ sends $g$ into $\bar{\mathcal{C}}$.
\end{proposition}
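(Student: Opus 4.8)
The plan is to prove termination by attaching to every functional $g$ in the Tits cone the nonnegative integer
\[
N(g) = \min\{\ell(w) : w \in W,\ w(g) \in \bar{\mathcal{C}}\},
\]
where $\ell$ denotes the word length in the generators $\sigma_i$. This minimum exists precisely because $g$ lies in the Tits cone, which by definition is the union of the translates $w(\bar{\mathcal{C}})$. I would then show that each pass through step (b) decreases $N$ by exactly one; since $N$ takes values in the nonnegative integers, the loop must halt after $K = N(g)$ steps. Unwinding the recursion $g_{k+1} = x_{i_k}(g_k)$ gives $g_{K+1} = (x_{i_K}\cdots x_{i_1})(g)$, so the output $w = x_{i_K}\cdots x_{i_1}$ indeed sends $g$ into $\bar{\mathcal{C}}$.

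The key step is the lemma: if $g$ is in the Tits cone, $g \notin \bar{\mathcal{C}}$, and $\sigma_i$ is any simple reflection with $g(x_i) < 0$, then $N(\sigma_i(g)) = N(g) - 1$. The inequality $N(\sigma_i(g)) \geq N(g) - 1$ is immediate: if $v$ realizes $N(\sigma_i(g))$, then $v\sigma_i$ sends $g$ into $\bar{\mathcal{C}}$, so $N(g) \leq \ell(v\sigma_i) \leq \ell(v) + 1$. For the reverse direction I would choose $w$ of minimal length $\ell(w) = N(g)$ with $w(g) \in \bar{\mathcal{C}}$; then $(w\sigma_i)(\sigma_i(g)) = w(g) \in \bar{\mathcal{C}}$, so $N(\sigma_i(g)) \leq \ell(w\sigma_i)$, and it suffices to prove the length drop $\ell(w\sigma_i) = \ell(w) - 1$.

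Identifying this length drop is the step I expect to be the main obstacle. I would argue as follows. Using the contragredient formula $(wf)(x) = f(w^{-1}(x))$, evaluate the functional $w(g)$ on the vector $w(x_i)$ to get $w(g)(w(x_i)) = g(x_i) < 0$. Now every $f \in \bar{\mathcal{C}}$ is nonnegative on each positive root, since a positive root is a nonnegative combination of the $x_j$ and $f(x_j) \geq 0$. As $w(g) \in \bar{\mathcal{C}}$ takes a strictly negative value on $w(x_i)$, the root $w(x_i)$ cannot be positive, hence must be negative. The standard characterization of the length function in the geometric representation (Bourbaki \cite{Bo}, Ch.~V, or Humphreys \cite{Humphreys}, \S5.7) then yields $\ell(w\sigma_i) = \ell(w) - 1$ exactly when $w(x_i)$ is a negative root. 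This is the single place where the deeper Coxeter machinery — the positive/negative dichotomy for roots and its equivalence with the length function — is essential; the rest is bookkeeping.

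Finally I would assemble these pieces into an induction on $N(g)$. The lemma shows that from any $g_k \notin \bar{\mathcal{C}}$, the algorithm's choice in step (b) produces $g_{k+1}$ with $N(g_{k+1}) = N(g_k) - 1$, and crucially this holds for \emph{any} generator with $g_k(x_{i_k}) < 0$, so the conclusion is independent of the choices made. Hence the process terminates after exactly $N(g)$ iterations and returns a Weyl group element $w$ carrying $g$ into $\bar{\mathcal{C}}$, as claimed.
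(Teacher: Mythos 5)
Your proof is correct, and its skeleton --- a descent on the minimal length $N(g)$ of a Weyl element carrying $g$ into $\bar{\mathcal{C}}$, with everything reduced to the length drop $\ell(w\sigma_i)=\ell(w)-1$ --- is the same induction on length that the paper runs (the paper inducts on $\ell(w)$ for an arbitrary $w$ with $w(g)\in\bar{\mathcal{C}}$, which amounts to the same well-founded descent). Where you genuinely diverge is in how the length drop is established. The paper stays entirely in the dual space: it cites Humphreys' lemma that $\ell(uw^{-1})<\ell(w^{-1})$ is equivalent to the chamber containment $w^{-1}(\mathcal{C})\subset A_u'$, where $A_u'=\{f\in E^*\;|\;f(u)<0\}$, and then argues via the half-space dichotomy that $w^{-1}(\mathcal{C})$ must lie on the negative side of the wall because $g\in w^{-1}(\bar{\mathcal{C}})\cap A_u'$. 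You instead pass to the root system in $E$ itself: the evaluation identity $(w(g))(w(x_i))=g(x_i)<0$, combined with the observation that every element of $\bar{\mathcal{C}}$ is nonnegative on positive roots, forces $w(x_i)$ to be a negative root, and the standard criterion ($\ell(w\sigma_i)<\ell(w)$ if and only if $w(x_i)$ is negative) finishes. The two invoked facts are equivalent pieces of standard Coxeter theory sitting side by side in Bourbaki and Humphreys, but your pairing computation is arguably cleaner than the chamber-containment argument, and your formulation buys a small bonus the paper leaves implicit: the algorithm halts in \emph{exactly} $N(g)$ steps, independently of the choices made in step (b). One line worth adding in a final write-up: the Tits cone is $W$-stable, so $N(g_{k+1})$ is defined at every iteration --- you use this implicitly when applying your key lemma to $g_k$.
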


\medskip\noindent{\bf Proof:}
The only thing to show here is that the algorithm stops,
and we do this by induction on the length of the Weyl group element $w$ that takes $g$ into $\bar{\mathcal{C}}$.
If that length is $0$, then $g=g_1$ is already in the non-negative cone.
If not, then the algorithm allows us to choose a generator $x_{i_1}$ such that $g_1(x_{i_1}) < 0$,
and we consider $g_2 = x_{i_1}g_1$;
for convenience set $u = x_{i_1}$, so that $g_2 = u g_1$.
The Weyl group element $w u$ sends $g_2$ into the non-negative cone:
$(w u)(g_2) = (w u)(ug_1) = w g \in \bar{\mathcal{C}}$.

We claim that the length $\ell(w u)$ is less than the length $\ell(w)$.
This is equivalent to $\ell(u w^{-1}) < \ell(w^{-1})$,
which by \cite{Humphreys}, Lemma, page 120,
is equivalent to $w^{-1}(\mathcal{C}) \subset A_u'$,
where $A_u' = \{f \in E^* \;|\; f(u) < 0\}$.

Consider the original functional $g$; by assumption it lies in $A_u'$,
and since $w(g) \in \bar{\mathcal{C}}$,
we have $g \in w^{-1}(\bar{\mathcal{C}})$.
Hence $g$ is in the intersection $w^{-1}(\bar{\mathcal{C}}) \cap A_u'$,
so that intersection is not empty.

However any translate of $\mathcal{C}$ is fully contained in either $A_u'$
or $A_u = \{f \in E^* \;|\; f(u) > 0\}$.
If $w^{-1}(\mathcal{C}) \subset A_u$, 
then we must have $w^{-1}(\bar{\mathcal{C}}) \subset \bar{A_u}$.
However $\bar{A_u} \cap A_u' = \emptyset$;
this would then force
$w^{-1}(\bar{\mathcal{C}}) \cap A_u' = \emptyset$.
This is a contradiction, since $g$ is an element there.
Hence we conclude that $w^{-1}(\mathcal{C}) \subset A_u'$,
which then allows us to conclude that the length $\ell(wu) < \ell(w)$.

We finish then by induction on the length.

\rightline{QED}

\begin{remark}\label{Worbitcheck}
The above give us a practical method in many cases for determining
whether two elements $\tau$ and $\tau'$ of the Tits cone
are in the same orbit for the Weyl group.
First, use the algorithm of Proposition \ref{TitsAlgorithm}
to move $\tau$ and $\tau'$ into the non-negative chamber $\bar{\calC}$,
obtaining elements $c$ and $c'$.
Determine the subsets $X$ and $X'$
for which $c \in \calC_X$ and $c' \in \calC_{X'}$.
If $\tau$ and $\tau'$ are in the same orbit,
then so are $c$ and $c'$,
and therefore by Proposition \ref{CXprop} we must have $X=X'$,
and $c' = w(c)$ for some element in $W_X$.

Therefore if $X \neq X'$,
or $X=X'$ but no element of $W_X$ sends $c$ to $c'$,
we conclude that $\tau$ and $\tau'$ are not in the same orbit.
However if $X=X'$ and such an element exists,
then $\tau$ and $\tau'$ are  in the same orbit of course.
\end{remark}

\subsection{The Weyl Group Action on $A^1(Y)$}
The $\phi$-invariant symmetric curve class $F = (r+1)h - \sum_i e_i  \in A^{r-1}(Y)$
gives a functional on the divisor classes $A^1(Y)$
via the intersection pairing;
the kernel of this functional is a codimension one subspace $V^1 \subset A^1$
on which $\phi$ and the symmetric group acts irreducibly.
A divisor class $dH-\sum_i m_i E_i$ is in $V^1$ if and only if $(r+1)d = \sum_i m_i$.
(Using the notation of Proposition \ref{CremonaAY}(a),
we see that this is indeed preserved by $\phi_I$ for any index set $I$,
since the quantity $t_1$ is added to the degree and to $r+1$ of the multiplicities.)

We have $\dim(V^1) = s$ and a basis for $V^1$ is given by the divisor classes
$v_i = \frac{1}{r+1}H-E_i$ for $i = 1,\ldots,s$.
The symmetric group acts on these basis elements by permuting the indices.
The action of $\phi$ is given by
\[
\phi(v_i) = \begin{cases}
v_i - \frac{2}{r+1}\sum_{j=1}^{r+1}v_j \;\;\text{if}\;\; 1 \leq i \leq r+1 \\
v_i + \frac{r-1}{r+1}\sum_{j=1}^{r+1}v_j \;\;\text{if}\;\; r+2 \leq i \leq s
\end{cases}
\]
An alternative basis which is more tuned to the Coxeter theory is
$\{X_i\}_{0\leq i \leq s-1}$
where $X_0 = \sum_{i=1}^{r+1} v_i = H - \sum_{i=1}^{r+1} E_i$
and $X_i = v_{i+1}-v_i = E_i - E_{i+1}$ for $1 \leq i \leq s-1$.

The quadratic invariant $q_1$ on $A^1$
introduced in (\ref{q1})
gives rise to a (symmetric, $\phi$-invariant) bilinear form $B_1$ on $A^1$
defined by setting
\[
B_1(x,y) = \frac{-1}{4}(q_1(x+y)-q_1(x)-q_1(y))
\]
which takes on the following values:
\[
B_1(H,H) = \frac{1-r}{2}; \;\;\;
B_1(E_i,E_i) = 1/2; \;\;\;
B_1(H,E_i) = 0; \;\;\;
B_1(E_i,E_j) = 0\;\;\text{for}\;\; i\neq j
\]
and hence has signature $(s,1)$ on $A^1$.
We note that
since the canonical class $K = (r+1)H-(r-1)\sum_i E_i$,
and we have
$B_1(K,dH-\sum_i m_iE_i) = \frac{1-r}{2}( (r+1)d-\sum_i m_i )$,
we see that
\[
B_1(x,K) = \frac{1-r}{2}(x\cdot F)
\]
and hence $V^1$ is exactly the orthogonal space to the canonical class $K$,
with respect to the $B_1$ pairing.
We have that
\[
B_1(K,K) = \frac{1-r}{2}( (r+1)^2 - s(r-1) )
\]
and hence if $(r+1)^2 > s(r-1)$ we have $B_1(K,K) < 0$,
which implies that the restriction of $B_1$ to the subspace $V^1$
has signature $(s,0)$, i.e., $B_1$ is positive definite on $V^1$.

It has the following values on the $X_i$ basis vectors for $V^1$:
\begin{align*}
B_1(X_i,X_i) &= 1 \;\;\text{for all}\;\; i \\
B_1(X_0,X_i) &= \begin{cases}
0 \;\;\text{if}\;\; i \geq 1, i \neq r+1 \\
-1/2 \;\;\text{if}\;\; i = r+1\\
\end{cases} \\
B_1(X_i,X_j) &= 0 \;\;\text{if}\;\; 1 \leq i, j \leq s-1, |i-j| > 1 \\
B_1(X_i,X_{i+1}) &= -1/2 \;\;\text{if}\;\; i \geq 1
\end{align*}

Let $W^1$ be the group of transformations of $V^1$
generated by the symmetric group on the $s$ indices
and the Cremona action $\phi$.
Denote by $\tau_i$ the transposition $(i,i+1)$ in the symmetric group;
since the $\tau_i$'s generate the symmetric group,
the $\tau_i$'s and $\phi$ form a set of generators for the group $W^1$.
Each of these generators have order two,
and it is standard that
$\tau_i$ and $\tau_j$ commute if $|i-j| > 1$, else $\tau_i\tau_j$ has order $3$.
In addition, an easy computation shows that
$\tau_i$ and $\phi$ commute unless $i = r+1$, and in that case $\tau_{r+1}\phi$ has order $3$.
Hence $W^1$ is a homomorphic image of the Coxeter group
whose Coxeter graph is the $T_{2, r+1, s-r+1}$ graph introduced above:
there is a surjective group homomorphism $\alpha:W \to W^1$.
This homomorphism sends $\sigma_0$ to $\phi$ and $\sigma_i$ to $\tau_i$ for $i \geq 1$.

Let $V^1_{\bbR} = V^1 \otimes \bbR$ be the real vector space with basis $X_i$.
Define a linear map $\beta: E \to V^1_\bbR$ by sending $x_i$ to $X_i$ for each $i = 0,1,\ldots,s-1$
and extending linearly.

What we have developed here is that the action of the Weyl group $W^1$ on $V^1_\bbR$
is isomorphic to the action of the Coxeter group $W$ on the vector space $E$:

\begin{proposition}\label{actionW1V1}
The group homomorphism $\alpha$ is an isomorphism.
The linear map $\beta$ is an isomorphism that preserves the bilinear forms:
\[
B_1(\beta(x),\beta(y)) = B(x,y) \;\;\text{for all}\;\; x, y \in E.
\]
The diagram
\[
\begin{matrix}
W \times E & \longrightarrow & E \\
(\alpha,\beta) \downarrow & & \beta \downarrow \\
W^1 \times V^1_\bbR & \longrightarrow & V^1_\bbR \\
\end{matrix}
\]
is commutative, where the rows are the actions given above.
\end{proposition}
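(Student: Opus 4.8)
The plan is to realize the whole statement as an instance of conjugating the standard geometric representation of $W$ by the isometry $\beta$. First I would dispatch the claims about $\beta$. Since the $X_i$, $0 \le i \le s-1$, form a basis of $V^1_\bbR$ and $\beta(x_i) = X_i$, the map $\beta$ is automatically a linear isomorphism. That it intertwines the two bilinear forms, $B_1(\beta(x),\beta(y)) = B(x,y)$, need only be checked on the basis $\{x_i\}$, and there it is a direct comparison: the tabulated values of $B_1$ on the $X_i$ reproduce exactly the rule defining $B$, because the pairs with $B_1 = -1/2$ --- namely $\{X_0, X_{r+1}\}$ and $\{X_i, X_{i+1}\}$ for $1 \le i \le s-2$ --- are precisely the edges of the graph $T_{2,r+1,s-r-1}$, while $B_1(X_i,X_i) = 1 = B(x_i,x_i)$ and all remaining pairs vanish.

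Second, and this is the crux, I would identify each generator $\alpha(\sigma_i)$ of $W^1$ with the $B_1$-reflection $R_i(y) := y - 2B_1(y, X_i)X_i$ in the vector $X_i$. For $i \ge 1$, $\alpha(\sigma_i) = \tau_i$ is the transposition swapping $E_i$ and $E_{i+1}$, and a one-line computation using $B_1(E_i,E_i) = 1/2$, $B_1(E_i,E_j) = 0$ for $i\ne j$, and $B_1(H,E_i) = 0$ shows that $R_i$ (reflection in $X_i = E_i - E_{i+1}$) effects the same swap and fixes $H$ and the other $E_k$; hence $\tau_i = R_i$. For $i=0$, $\alpha(\sigma_0) = \phi$, and using $B_1(H,H) = (1-r)/2$ together with the explicit Cremona formulas of Proposition \ref{CremonaAY}(a), I would check that the reflection $R_0$ in $X_0 = H - \sum_{j=1}^{r+1} E_j$ satisfies $R_0(H) = rH - (r-1)\sum_{j\le r+1}E_j = \phi(H)$ and $R_0(E_k) = \phi(E_k)$ for every $k$; hence $\phi = R_0$. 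Since each $X_i$ lies in $V^1 = K^\perp$ (with respect to $B_1$), each $R_i$ fixes $K$ and therefore preserves $V^1$, so these identities hold as transformations of $V^1$.

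With these two facts in hand, the commutativity of the diagram is formal. Conjugation by $\beta$ turns the homomorphism $\sigma: W \to GL(E)$ into a homomorphism $w \mapsto \beta\,\sigma(w)\,\beta^{-1}$ of $W$ into $GL(V^1_\bbR)$; on a generator $\sigma_i$, because $\beta$ is a $(B,B_1)$-isometry with $\beta(x_i) = X_i$, conjugation sends the $B$-reflection $\sigma_i$ in $x_i$ to the $B_1$-reflection in $X_i$, namely $R_i = \alpha(\sigma_i)$. Two homomorphisms out of $W$ that agree on generators agree everywhere, so $\beta\,\sigma(w)\,\beta^{-1} = \alpha(w)$ for all $w$, which is exactly the asserted commutativity $\beta(w\cdot x) = \alpha(w)\cdot\beta(x)$.

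Finally, injectivity of $\alpha$ (surjectivity being already established before the proposition) follows immediately: if $\alpha(w) = 1$, then $\beta(\sigma(w)x) = \alpha(w)\beta(x) = \beta(x)$ for all $x$, and since $\beta$ is injective, $\sigma(w) = \mathrm{id}_E$; but the standard geometric representation $\sigma$ is faithful, so $w = 1$. Thus $\alpha$ is an isomorphism. The only genuine computation is the second step --- verifying that the Cremona involution $\phi$ coincides with the $B_1$-reflection in $X_0$ --- and I expect that to be the main point to get right, since it is where the geometry of the Cremona transformation meets the abstract Coxeter combinatorics; everything else is either a basis-level check or a formal consequence of the faithfulness of the geometric representation.
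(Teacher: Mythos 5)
Your proposal is correct, and it reorganizes the argument in a genuinely different way from the paper. The paper proves commutativity by brute force: it checks the intertwining identity $\beta(\sigma_i(x_j)) = \alpha(\sigma_i)(\beta(x_j))$ on every ordered pair $(\sigma_i, x_j)$ of generator and basis vector, running through the cases $i,j \geq 1$ (with subcases $|i-j|>1$, $|i-j|=1$, $i=j$), $i=0$, $j=0$, and $i=j=0$, using the Cremona formulas of Proposition \ref{CremonaAY}(a) in the $X_j$ basis; injectivity of $\alpha$ is then deduced exactly as you do, from the commuting square plus faithfulness of $\sigma$. You instead identify each generator $\alpha(\sigma_i)$ of $W^1$ outright as the $B_1$-reflection $R_i$ in $X_i$ (checking this on the basis $H, E_k$ of $A^1$, where for instance $R_0(H) = rH - (r-1)\sum_{j\le r+1}E_j = \phi(H)$ needs $B_1(X_0,X_0)=1$, which holds since $\frac{1-r}{2} + \frac{r+1}{2} = 1$), and then obtain commutativity formally from the fact that conjugating a reflection by the isometry $\beta$ yields the reflection in the image vector, with agreement on generators propagating to all of $W$. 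The computational core is the same Cremona verification in two different bases --- your check that $\phi = R_0$ on $H$ and the $E_k$ carries the same information as the paper's checks that $\phi(X_{r+1}) = X_{r+1}+X_0$, $\phi(X_0) = -X_0$, and $\phi(X_j)=X_j$ otherwise --- but your organization buys a shorter case analysis and makes the phrase ``standard geometric representation'' conceptually transparent (reflections go to reflections under an isometry), at the cost of the one structural input the paper's direct computation avoids, namely knowing in advance that $B_1$-reflections behave naturally under $(B,B_1)$-isometries. Your handling of the restriction to $V^1$ (each $R_i$ fixes $K$ because $X_i \in K^\perp$, hence preserves $V^1$) correctly plugs the one subtlety your route introduces, and your appeal to the previously established surjectivity of $\alpha$ and faithfulness of $\sigma$ matches the paper's logic.
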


\medskip\noindent{\bf Proof:}
One first checks that the diagram commutes,
and for this is suffices to show that it commutes on ordered pairs
$(\sigma_i,x_j) \in W \times E$.
This ordered pair is sent to $\sigma_i(x_j) = x_j - 2B(x_j,x_i)x_i$ in $E$,
and is then sent to $X_j - 2B(x_j,x_i)X_i$ in $V^1_\bbR$.
Going the other way, it is sent to
$(\phi,X_j)$ (if $i=0$) or $(\tau_i,X_j)$ (if $i \geq 1)$ in $W^1\times V^1_\bbR$;
then on to $\phi(X_j)$ or $\tau_i(X_j)$ depending on $i$.
Hence we must show that
\[
X_j - 2B(x_j,x_i)X_i = 
\begin{cases}
\phi(X_j) \;\;\text{if}\;\; i = 0\\
\tau_i(X_j) \;\;\text{if}\;\; i \geq 1.
\end{cases}
\]

First assume that $i \geq 1$ and $j \geq 1$.

If $|i-j|>1$, then $B(x_j,x_i) = 0$ and so the clockwise image is $X_j$.
The counterclockwise image is $\tau_i(X_j) = \tau_i(E_j-E_{j+1}) = E_j-E_{j+1} = X_j$ also.

If $|i-j|=1$, then $B(x_j,x_i) = -1/2$, so the clockwise image is
$X_j + X_i$.  This is $E_j - E_{j+1} + E_i - E_{i+1}$
which is  $E_{i-1}-E_{i+1}$ if $j=i-1$ or $E_i-E_{i+2}$ if $j = i+1$.
The counterclockwise image is $\tau_i(E_j - E_{j+1})$
which is $E_j-E_{i+1}=E_{i-1}-E_{i+1}$ if $j=i-1$ and is $E_i-E_{j+1} = E_i-E_{i+2}$ if $j=i+1$ as required.

Finally if $i=j$ then $B(x_j,x_i)=1$ so the clockwise image is $X_j-2X_i = -X_i$.
The counterclockwise image is $\tau_i(X_i) = \tau_i(E_i-E_{i+1}) = E_{i+1}-E_i = - X_i$ also.

Next assume $i=0$ and $j \geq 1$.
In this case we must show that $X_j - 2B(x_j,x_0)X_0 = \phi(X_j)$.
If $j \neq r+1$, then $B(x_j,x_0)=0$, so we must show that $\phi$ fixes $X_j$, which is clear.

For $j=r+1$, we have $B(x_j,x_0)=-1/2$, so we must show that
$\phi(X_{r+1}) = X_{r+1} + X_0$.
Using the notation and formulas of Proposition \ref{CremonaAY}(a),
we have that $X_{r+1} = E_{r+1}-E_{r+2}$ so that
$d=0$, $m_{r+1}=-1$, $m_{r+2}=1$, and all other $m_i=0$; hence $t_1 = 1$
so that the image under the Cremona has degree $1$, $m_i = 1$ for $1 \leq i \leq r$, $m_{r+1}=0$,
$m_{r+2}=1$, and $m_i=0$ for $i \geq r+3$.
This is $X_0+X_{r+1}$ as required.

Next assume $i \geq 1$ and $j=0$.
In this case we must show that $X_0 - 2B(x_0,x_i)X_i = \tau_i(X_0)$.

If $i \neq r+1$ then $B(x_0,x_i)=0$ so we must observe that $\tau_i$ fixes $X_0$, which is clear.

If $i = r+1$ then $\tau_{r+1}(X_0) = X_0 + X_{r+1}$ as required.

The final check is for $i=j=0$, and we must show that $\phi(X_0) = - X_0$.
Again using the notation of Proposition \ref{CremonaAY}(a), 
we have $t_1=-2$, and since the degree and multiplicities for $X_0$ are all $1$ up to index $r+1$,
adding $-2$ changes them to $-1$, switching the sign.

This finishes the computation that the diagram commutes.

It is obvious that $\beta$ is an isomorphism, since it is defined by identifying the two bases.

The preservation of the bilinear forms is immediate from the computation of $B$ and $B_1$ given above on the basis vectors.

To finish we must show that $\alpha$ is a group isomorphism, and it suffices to show it is $1$-$1$.
Suppose that $w \in W$ is in the kernel of $\alpha$.
Fix any vector $e \in E$;
since the diagram commutes, we have $\beta(w(e)) = (\alpha(w))(\beta(e)) = \beta(e)$ in $V^1_\bbR$
since $\alpha(w) = 1$ in $W^1$.
Since $\beta$ is an isomorphism, this implies that $w(e) = e$.
Since $e$ was arbitrary, this implies $w=1$ in $W$.

\rightline{QED}

This finishes the proof that the group generated by the symmetric group and the Cremona transformation,
acting on $V^1$,
is isomorphic to the standard geometric representation of the Coxeter group with graph $T_{2,r+1,s-r-1}$.

Therefore we can apply Theorem \ref{FiniteWeyl} and conclude the following:

\begin{theorem}
If $(r+1)^2 > s(r-1)$ then the Weyl group acting on $V^1$ (and hence on $A^1$) is finite,
and the Tits cone in the dual space $(V^1)^*$ is the whole space.
\end{theorem}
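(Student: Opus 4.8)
The plan is to deduce this directly from the machinery already assembled: the isometry of Proposition \ref{actionW1V1} combined with the equivalence of Theorem \ref{FiniteWeyl}. The one computation that does the real work — the sign of $B_1(K,K)$ — has already been recorded in the discussion preceding the statement, so the proof amounts to threading these facts together and then transporting the conclusion along the established isomorphisms.

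First I would feed the hypothesis into the formula $B_1(K,K) = \tfrac{1-r}{2}\big((r+1)^2 - s(r-1)\big)$. The leading factor $\tfrac{1-r}{2}$ is negative and the hypothesis $(r+1)^2 > s(r-1)$ makes the second factor positive, so $B_1(K,K) < 0$. Next, since $B_1$ has signature $(s,1)$ on the $(s+1)$-dimensional space $A^1$, and since $V^1 = K^\perp$ with respect to $B_1$, the orthogonal decomposition $A^1 = V^1 \oplus \langle K\rangle$ forces the unique negative direction to lie in $\langle K\rangle$. Hence the restriction of $B_1$ to $V^1$ is positive definite — this is precisely the signature bookkeeping already noted in the text.

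The third step transports positive-definiteness across the isometry. By Proposition \ref{actionW1V1} the linear isomorphism $\beta: E \to V^1_\bbR$ satisfies $B_1(\beta(x),\beta(y)) = B(x,y)$, so positive-definiteness of $B_1$ on $V^1_\bbR$ is equivalent to positive-definiteness of the Coxeter form $B$ on $E$. Now Theorem \ref{FiniteWeyl} applies verbatim: $B$ positive definite is equivalent to the Weyl group $W$ being finite and to the Tits cone being all of $E^*$.

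Finally I would pull both conclusions back to $V^1$. Since $\alpha: W \to W^1$ is a group isomorphism, finiteness of $W$ yields finiteness of $W^1$, and hence of its action on all of $A^1$ (the action is trivial on the invariant complement $\langle K\rangle$, so no further group elements arise). For the Tits cone, the dual isomorphism $\beta^*:(V^1_\bbR)^* \to E^*$ intertwines the $W^1$- and $W$-actions and carries the positive chamber of $(V^1)^*$ onto $\mathcal{C}$; it therefore carries the Tits cone of $(V^1)^*$ onto the Tits cone of $E^*$, which we have just shown equals $E^*$. Consequently the Tits cone in $(V^1)^*$ is the whole space. I do not expect a genuine obstacle here: the substantive content is the sign of $B_1(K,K)$ and the resulting definiteness, both already in hand, while the remaining steps are formal consequences of the isometry $\beta$ and the group isomorphism $\alpha$.
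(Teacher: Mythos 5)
Your proposal is correct and follows exactly the paper's route: the paper's own (very terse) proof likewise invokes the earlier computation $B_1(K,K)=\tfrac{1-r}{2}\bigl((r+1)^2-s(r-1)\bigr)<0$ to get positive definiteness of $B_1$ on $V^1$, then transports this through the isometry of Proposition \ref{actionW1V1} and applies Theorem \ref{FiniteWeyl}. Your write-up merely makes explicit the signature bookkeeping and the formal pullback along $\alpha$ and $\beta^*$ that the paper leaves implicit.
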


\medskip\noindent{\bf Proof:}
We have seen that this inequality with $r$ and $s$ implies that the bilinear form $B_1$ is positive definite.
The previous theorem then allows us to apply Theorem \ref{FiniteWeyl} to conclude.
\rightline{QED}
We note that
\begin{equation}\label{finiteWeylrs}
(r+1)^2 > s(r-1) \iff r=2, s \leq 8; r=3, s \leq 7; r=4, s \leq 8; r\geq 5, s \leq r+3.
\end{equation}

\subsection{The dual space to $V^1_\bbR$ as a quotient of $A^{r-1}$}

The intersection pairing in the Chow ring allows us to naturally identify the curve classes $A^{r-1}$
as the dual space to the divisor classes $A^1$.
The subspace $V^1\subset A^1$ defined as the orthogonal space to the curve class
$F = (r+1)h - \sum_i e_i$
therefore has, as its natural dual space,
the quotient $V^{r-1} = A^{r-1}/\langle F \rangle$;
we mod out the span of the vector $F$
which acts via intersection as identically zero on $V^1$.

Let us be explicit about the dual action on $V^{r-1}$;
we will restrict ourselves to computing the action by the generators of the Weyl group $W$.
For the $x_i$, $i \geq 1$, these act on $V^1$ by the transposition $(i,i+1)$ on the classes of the exceptional divisors $E_i$;  clearly these act then on $V^{r-1}$ by that same transposition on the $e_i$ classes ($\mod F$) in $V^{r-1}$.

For the generator $x_0$, it acts on $V^{1}$ by the Cremona transformation on divisors;
hence if we fix a class $c \in V^{r-1} (\mod F)$ we have that $(x_0(c))(D) = (c \cdot \phi^{-1}(D)) = (c \cdot \phi(D))$
for any divisor class $D \in V^1$.
Since the Cremona transformation preserves the intersection form on the Chow ring, we have
$(c \cdot \phi(D)) = (\phi(c) \cdot D)$,
so that $x_0(c)$ acts as intersection with $\phi(c)$,
and hence is equal to $\phi(c)$ by the perfect pairing.

Therefore the positive chamber $\mathcal{C} \subset V^{r-1}_\bbR$ is the set of curve classes $c (\mod F)$
such that $c \cdot X_i > 0$ for each generator $X_i \in V^1$.
Write $c = dh - \sum_i m_i e_i (\mod F)$.
Then we have:
\begin{align*}
(c \cdot X_0) &= (c \cdot H-\sum_{i=1}^{r+1}E_i) = d-\sum_{i=1}^{r+1}m_i \\
i \geq 1: (c \cdot X_i) &= (c \cdot E_i-E_{i+1}) = m_{i} - m_{i+1} \\
\end{align*}
(We note that these conditions are all invariant under the addition of multiples of the curve class $F$.)
Hence the positive chamber is defined by
\[
\mathcal{C} = \{dh-\sum_i m_i e_i (\mod F) \;|\; d > \sum_{i=1}^{r+1} m_i \;\;\text{and}\;\; m_i > m_{i+1} \forall i = 1,\ldots,s-1\}.
\]
This is the condition that the multiplicities strictly decrease, and that the class is strictly Cremona reduced.
The closure $\bar{\mathcal{C}}$ relaxes these conditions to $d \geq \sum_{i=1}^{r+1}m_i$ and $m_i\geq m_{i+1}$.
Note that the $\mod F$ requirement then eliminates the condition that all multiplicities are positive:
we do not have the condition that $m_s\geq 0$.

For each subset $X \subset \{X_i\}$ we also have the chamber faces $\mathcal{C}_X$ as above,
defined with the equalities instead of strict inequalities here.

It is useful to be explicit about the algorithm presented in Proposition \ref{TitsAlgorithm}.
We re-formulate it in our case below.

\begin{proposition}\label{cremonareduction}
Let $c \in A^{r-1}(\mod F)$, and suppose it is not in the non-negative chamber $\bar{\mathcal{C}}$.
Then we may apply the following sequence of transformations to $c$:
\begin{itemize}
\item[(a)] Reorder the multiplicities in descending order.
\item[(b)] If $c$ is now Cremona reduced, stop.
\item[(c)] If $c$ is not Cremona reduced,
apply the Cremona transformation $\phi$ (to the first, hence largest, $r+1$ multiplicities).
\item[(d)] Go to step (a).
\end{itemize}
If this sequence stops, then $c$ is in the Tits cone
and it stops with a Cremona-reduced class that is Weyl-equivalent to $c (\mod F)$.
If this sequence does not stop, then $c$ is not in the Tits cone,
and no Weyl group translate of $c$ can be Cremona reduced.
\end{proposition}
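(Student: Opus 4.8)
The plan is to recognize this proposition as a concrete translation of the abstract Tits-cone algorithm of Proposition \ref{TitsAlgorithm}, carried out in the explicit model where $V^{r-1}_\bbR = A^{r-1}/\langle F\rangle$ is the dual of $V^1_\bbR$ and the Weyl generators act as computed just above: $x_i$ for $i \geq 1$ swaps the multiplicities $m_i$ and $m_{i+1}$, while $x_0$ is the standard Cremona $\phi$ applied to the first $r+1$ points. First I would verify that each elementary move in steps (a) and (c) is a legitimate step of the abstract algorithm, i.e.\ that it applies a generator $x_i$ on which the functional $c$ is strictly negative. For the reordering in (a): whenever two adjacent multiplicities are out of order, $m_i < m_{i+1}$, we have $(c\cdot X_i) = m_i - m_{i+1} < 0$, so the adjacent transposition $x_i$ is admissible, and sorting into descending order is a finite sequence of such moves. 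For the Cremona step (c): once the multiplicities are non-increasing, the largest $(r+1)$-fold partial sum of multiplicities is $\sum_{i=1}^{r+1} m_i$, so failing to be Cremona reduced is exactly the condition $d < \sum_{i=1}^{r+1} m_i$, that is $(c\cdot X_0) < 0$, and applying $x_0 = \phi$ is admissible.

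Next I would match the stopping condition. The algorithm halts precisely when the multiplicities are non-increasing and $d \geq \sum_{i=1}^{r+1} m_i$, which is exactly the description of the closed chamber $\bar{\mathcal{C}}$ obtained above. Thus the sequence in the proposition is nothing but a particular instance of the algorithm of Proposition \ref{TitsAlgorithm}, equipped with a fixed deterministic rule for which negative generator to apply at each stage.

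The crux of the argument is then termination. Here I would appeal to the proof of Proposition \ref{TitsAlgorithm}, observing that it establishes more than bare termination: each admissible move (applying any generator on which the functional is negative) strictly decreases the length $\ell(w)$ of a Weyl element $w$ carrying the current class into $\bar{\mathcal{C}}$. Since this length drop holds for \emph{every} legitimate choice of generator, any strategy --- in particular the reorder-then-Cremona strategy of (a)--(d) --- must terminate whenever $c$ lies in the Tits cone, and upon halting the terminal class lies in $\bar{\mathcal{C}}$ and is Weyl-equivalent to $c$. Conversely, if the sequence halts then it has produced a Weyl translate of $c$ lying in $\bar{\mathcal{C}}$, so $c$ is in the Tits cone; and if it never halts then $c$ cannot be in the Tits cone, since membership would force termination. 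I expect this point --- that the specific deterministic strategy inherits termination from the ``any generator'' formulation --- to require the most care, and it is resolved by the observation that the length decrease in the proof of Proposition \ref{TitsAlgorithm} is uniform over all admissible generator choices.

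Finally, for the last assertion, that when $c$ is not in the Tits cone no Weyl translate of $c$ is Cremona reduced, I would use that Cremona reducedness --- the condition $d \geq \sum_{i\in I} m_i$ for all $(r+1)$-subsets $I$ --- depends only on the multiset of multiplicities, hence is invariant under the reordering transpositions $x_i$. Therefore, if some translate $w(c)$ were Cremona reduced, reordering its multiplicities (yet another Weyl translate) would land in $\bar{\mathcal{C}}$, placing $c$ in the Tits cone and contradicting the hypothesis.
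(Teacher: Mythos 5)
Your proposal is correct and matches the paper's intent exactly: the paper states this proposition without a separate proof, presenting it as the explicit reformulation of the abstract algorithm of Proposition \ref{TitsAlgorithm} under the dual-action identification (transpositions $x_i$ for $i\geq 1$, the Cremona $\phi$ for $x_0$, and $\bar{\mathcal{C}}$ as sorted, Cremona-reduced classes $\mathrm{mod}\ F$). Your filling-in of the details --- each sort or Cremona step is a generator with strictly negative pairing, the length-decrease in the proof of Proposition \ref{TitsAlgorithm} is uniform over all admissible generator choices so the deterministic strategy inherits termination, and Cremona-reducedness is permutation-invariant so a reduced translate would land in $\bar{\mathcal{C}}$ after sorting --- is precisely the argument the paper leaves implicit.
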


\begin{remark}\label{rem:monotonic}
Since applying Cremona transformations when the class is not Cremona reduced strictly reduces the degree, Proposition \ref{cremonareduction} implies that if a class is in the Tits cone, then the sequence of Cremona transformations that brings the class into the non-negative chamber (i.e., makes it Cremona reduced) monotonically reduces the degree.
\end{remark}

One practical consequence of Proposition \ref{CXprop} for our purposes is the following.

\begin{lemma}\label{twoCremonareducedclasses}
Suppose that $c$ and $c'$ are two Cremona reduced classes
with multiplicities in non-increasing order.
Assume that they are Cremona-equivalent:
there is an element $w$ in the Weyl group such that $w(c)=c'$.
Then $c \equiv c' \mod F$.
\end{lemma}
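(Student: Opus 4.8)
The plan is to observe that, modulo $F$, both classes land in the closed fundamental chamber $\bar{\mathcal{C}} \subset V^{r-1}_\bbR$, and then to read off the conclusion from the chamber combinatorics of Proposition \ref{CXprop}.

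First I would translate the two hypotheses on $c$ into the chamber inequalities established in the previous subsection. Having the multiplicities in non-increasing order is precisely the condition $m_i - m_{i+1} = (c \cdot X_i) \geq 0$ for every $i \geq 1$. Being Cremona reduced means $d \geq \sum_{k \in I} m_k$ for every $(r+1)$-subset $I$; since the multiplicities are ordered, the largest such sum is $\sum_{k=1}^{r+1} m_k$, so this is equivalent to the single inequality $(c \cdot X_0) = d - \sum_{k=1}^{r+1} m_k \geq 0$. Hence $c \mod F$ satisfies all of the defining inequalities of $\bar{\mathcal{C}}$ and therefore lies there; the identical argument places $c' \mod F$ in $\bar{\mathcal{C}}$ as well. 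Since $\bar{\mathcal{C}}$ is the disjoint union of the faces $\mathcal{C}_X$, I may write $c \mod F \in \mathcal{C}_X$ and $c' \mod F \in \mathcal{C}_{X'}$ for uniquely determined subsets $X, X'$ of the generators.

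Next I would use that the quotient action is $W$-equivariant: because $F$ is fixed by the symmetric group and by $\phi$, hence by every element of $W$, the action on $A^{r-1}$ descends to the action on $V^{r-1} = A^{r-1}/\langle F\rangle$ set up above. Thus the hypothesis $w(c) = c'$ yields $w(c \mod F) = c' \mod F$, so that $c' \mod F \in w(\mathcal{C}_X) \cap \mathcal{C}_{X'}$ and this intersection is nonempty. Now Proposition \ref{CXprop} applies directly: the nonempty intersection forces $X = X'$ and $w \in W_X$, and the stabilizer of every point of $\mathcal{C}_X$ is exactly $W_X$. In particular $w$ fixes $c \mod F$; combining with $w(c \mod F) = c' \mod F$ gives $c \mod F = c' \mod F$, which is the assertion $c \equiv c' \mod F$.

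I do not expect a genuine obstacle here: once the two hypotheses are matched with membership in $\bar{\mathcal{C}}$, the statement is an immediate consequence of the general Coxeter fact recorded in Proposition \ref{CXprop}. The one step requiring care is precisely this matching, in two respects. First, one must note that for ordered multiplicities the single inequality $d \geq \sum_{k=1}^{r+1} m_k$ already encodes Cremona reducedness against every $(r+1)$-subset at once. Second, membership in the Tits cone, which makes Proposition \ref{CXprop} applicable, is automatic here because $c \mod F$ and $c' \mod F$ already lie in the fundamental domain $\bar{\mathcal{C}}$ itself.
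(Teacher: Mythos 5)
Your proposal is correct and follows essentially the same route as the paper: both place $c \bmod F$ and $c' \bmod F$ in the closed chamber $\bar{\mathcal{C}}$ and then invoke Proposition \ref{CXprop} to force $X = X'$, $w \in W_X$, and hence $w$ fixing $c \bmod F$ pointwise. The only difference is that you spell out the details the paper leaves implicit (the matching of the ordered-multiplicity and Cremona-reduced hypotheses with the chamber inequalities, and the $W$-equivariance of the quotient by $\langle F\rangle$), both of which are correctly handled.
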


\begin{proof}
Suppose that $c \mod F \in \calC_X$ for a subset $X$ of generators of the Weyl group $W$.
Proposition \ref{CXprop} implies that $c' \mod F \in \calC_X$ also,
and $w \in W_X$.
Since $W_X$ fixed (pointwise) $\calC_X$,
we must have $c' \mod F = c \mod F$ as claimed.
\end{proof}

\subsection{The direct action on the space of curve classes}
In the treatment above, we see that the Coxeter group acts on the divisor classes $A^1(Y^r_s)$, with the action restricted to the subspace $V^1$ being isomorphic to the standard geometric representation;
with this perspective, the dual space is then the quotient of the curve classes.

It turns out that we can take the opposite approach:
the corresponding subspace of $A^{r-1}$ is also isomorphic to the
standard geometric representation of the Coxeter group,
and the corresponding quotient of $A^1$ is then the dual.

It is more convenient to tensor with $\bbR$ at the outset.
Define the vector space isomorphism
$\Psi: A^1(Y^r_s)\otimes\bbR \to A^{r-1}(Y^r_s)\otimes\bbR$
by
$\Psi(H)=h$ and $\Psi(E_i) = e_i/(r-1)$ for each $i$.

Note then that
\begin{align*}
q_{r-1}(\Psi(dH-\sum_i m_i E_i)
&= q_{r-1}(dh-\sum_i (m_i/(r-1)) e_i \\
&= d^2 - (r-1)\sum_i (m_i/(r-1))^2 \\
&= \frac{1}{r-1} ((r-1)d^2 - \sum_i m_i^2) \\
&= \frac{1}{r-1} q_1(dH-\sum_i m_i E_i)
\end{align*}
and hence the isomorphism $\Psi$ preserves the quadratic forms 
(and therefore the bilinear forms) up to scalar.

It is clear that $\Psi$ commutes with the action of the symmetric group.
We claim that it also commutes with the Cremona transformations.
To check this, it is enough to check it for $\phi$.

We have
$\phi(\Psi(dH-\sum_i m_i E_i))
= \phi(dh - \sum_i (m_i/(r-1))e_i)
= d'h - \sum_i m_i' e_i$
where if we define $t_{r-1} = d-\sum_{i\leq r+1} (m_i/(r-1))$, then
$d' = d+(r-1)t_{r-1}$,
$m_i' = m_i/(r-1) + t_{r-1}$ for $i \leq r+1$, and
$m_i'= m_i/(r-1)$ for $i > r+1$.

On the other hand,
$\Psi(\phi(dH-\sum_i m_i E_i))
= \Psi(d''H - \sum_i m_i'' E_i)
= d''h - \sum_i m_i''/(r-1) e_i$
where if we define $t_1 = (r-1)d-\sum_{i \leq r+1} m_i$, then
$d'' = d+t_1$,
$m_i''=m_i+t_1$ for $i \leq r+1$, and
$m_i'' = m_i$ for $i > r+1$.

We now see that $d'=d'' = rd-\sum_{i \leq r+1} m_i$, so the coefficients of $h$ agree.
If $j \leq r+1$, then the coefficient of $e_j$ in both cases is
$d-\sum_{i \leq r+1, i \neq j} m_i/(r-1)$.
Finally if $j > r+1$, the coefficient of $e_j$ in both cases is
$m_j/(r-1)$.

We conclude that $\Psi(\phi(C)) = \phi(\Psi(C))$ for all $C$.
We also note that $\Psi(K) = F$.

We can then reproduce the entire analysis and conclude the following:
\begin{proposition}
Let $V^{r-1} \subset A^{r-1}\otimes\bbR$
be the subspace orthogonal to $K$ under the intersection pairing $(-\cdot -)$.
Then the action of the Weyl group on $V^{r-1}$ is isomorphic
to the standard geometric representation of the Coxeter group for $T_{2,r+1,s-r1}$;
the dual space is isomorphic to the quotient space $A^1\otimes\bbR/<K>$.
\end{proposition}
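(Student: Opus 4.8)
The plan is to transport the isomorphism already established in Proposition \ref{actionW1V1} across the map $\Psi$, using only the three facts about $\Psi$ verified just above: it is a linear isomorphism $A^1\otimes\bbR \to A^{r-1}\otimes\bbR$, it commutes with every generator of the Weyl group (the transpositions and $\phi$, hence all of $W^1$), and it satisfies $\Psi(K)=F$ while scaling the quadratic forms by the factor $\tfrac{1}{r-1}$. Nothing new needs to be proved about the $W$-action itself; the work is in matching up the subspace $V^{r-1}$ on the curve side with the image under $\Psi$ of the divisor-side subspace $V^1$.

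First I would pin down $V^{r-1}$ concretely. Computing the intersection pairing on the basis gives $(K\cdot c)=-\bigl((r+1)d-(r-1)\sum_i m_i\bigr)=-\langle F,c\rangle$ for $c=dh-\sum_i m_i e_i$, so the subspace orthogonal to $K$ under the intersection pairing coincides with the subspace orthogonal to $F$ under the form $\langle-,-\rangle$. Since $\Psi$ preserves the bilinear forms up to the nonzero scalar $\tfrac{1}{r-1}$ and $\Psi(K)=F$, for every $D\in A^1\otimes\bbR$ we get $\langle\Psi(D),F\rangle=\tfrac{1}{r-1}B_1(D,K)$; hence $\Psi(D)\in V^{r-1}$ if and only if $D$ lies in $V^1$ (the orthogonal complement of $K$ under $B_1$). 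Therefore $\Psi$ restricts to a linear isomorphism $V^1\to V^{r-1}$. The only genuinely delicate point in the whole argument is this bookkeeping: one must keep the intersection pairing distinct from the forms $B_1$ and $\langle-,-\rangle$ coming from the quadratic invariants, and check that ``orthogonal to $K$ under the intersection pairing'' is exactly the orthogonal complement that $\Psi$ produces. The scalar $\tfrac{1}{r-1}$ by which $\Psi$ distorts the forms is harmless, affecting neither the resulting $W$-representation nor the annihilator computation below.

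Next I would transport the representation. Because $\Psi$ commutes with every generator of $W^1$, it intertwines the $W$-action on $V^1$ (via $\alpha$) with the induced $W$-action on $V^{r-1}$. Composing the isomorphism $\beta:E\to V^1_\bbR$ of Proposition \ref{actionW1V1} with the restriction $\Psi|_{V^1}$ yields a $W$-equivariant linear isomorphism $E\to V^{r-1}$ carrying $B$ to $\tfrac{1}{r-1}\langle-,-\rangle$. Since the $W$-module structure is what characterizes the standard geometric representation, and its invariant form is unique up to scalar in any event, this exhibits the Weyl action on $V^{r-1}$ as isomorphic to the standard geometric representation attached to $T_{2,r+1,s-r-1}$.

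Finally, for the dual-space claim I would invoke linear-algebra duality. The intersection pairing identifies $A^{r-1}\otimes\bbR$ with $(A^1\otimes\bbR)^*$, and under this identification $V^{r-1}=\{c:(K\cdot c)=0\}$ is precisely the annihilator of the line $\langle K\rangle\subset A^1\otimes\bbR$; hence $V^{r-1}\cong(A^1\otimes\bbR/\langle K\rangle)^*$, and dualizing once more identifies the dual of $V^{r-1}$ with $A^1\otimes\bbR/\langle K\rangle$. This identification is automatically $W$-equivariant, since the intersection pairing is Cremona-invariant by Proposition \ref{CremonaAY}(c) and $K$ is fixed by the Weyl group, so $\langle K\rangle$ is a $W$-stable line. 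This mirrors exactly the relationship between $V^1$ and $A^{r-1}/\langle F\rangle$ developed earlier, now with the roles of divisors and curves interchanged.
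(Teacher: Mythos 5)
Your proposal is correct and takes essentially the same route as the paper, whose entire proof consists of the preceding verification that $\Psi$ commutes with the symmetric group and the Cremona action, scales the quadratic forms, and sends $K$ to $F$, followed by ``we can then reproduce the entire analysis'' --- i.e., exactly your transport of Proposition \ref{actionW1V1} through $\Psi$, together with the annihilator/duality bookkeeping for $A^1\otimes\bbR/\langle K\rangle$. One harmless nit: because $B_1$ is polarized from $q_1$ with the factor $-\tfrac14$ while $\langle-,-\rangle$ is polarized from $q_{r-1}$ with $+\tfrac12$, the bilinear forms satisfy $\langle\Psi(x),\Psi(y)\rangle=-\tfrac{2}{r-1}\,B_1(x,y)$ rather than $\tfrac{1}{r-1}\,B_1(x,y)$, which affects nothing in your argument since, as you observe, only the nonvanishing of the scalar is used for the orthogonality and equivariance claims.
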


We immediately obtain the analogue of Proposition \ref{cremonareduction} also,
which we leave to the reader to formulate.


\section{Applications}\label{applications}
There are two main applications of the Coxeter group/Weyl group circle of ideas that we want to focus on, dealing with the Weyl group orbit of a general line, a line through one point, and a line through two points.

It is elementary to see that such lines through $1-i$ points 
($i = 1,0,-1$)
are smooth rational curves in $Y^r_s$
with normal bundle equal to $\calO(i)^{\oplus(r-1)}$.
In Section \ref{rationalcurves(i)curves} we referred to such a smooth rational curve an \emph{$(i)$-curve} in $Y^r_s$;
If $c$ is the class of such a line,
then $\langle c, F\rangle = 2+i(r-1)$;
such a class is a \emph{numerical $(i)$-class} in $A^{r-1}$.

An \emph{$(i)$-Weyl line} is a curve in $Y^r_s$
that is in the orbit of a line through $1-i$ points
under the action of the Weyl group.
If $c$ is the class of such a curve,
then $\langle c, F\rangle = 2+i(r-1)$
and $\langle c, c\rangle = 1+(i-1)(r-1)$.
A \emph{numerical $(i)$-Weyl class}
is a class $c$ with these two values for the invariants.

Finally recall that a curve class $dh-\sum_i m_i$
with $d \geq 1$, $m_i \geq m_{i+1}$ for each $i$
and $m_s \geq 0$ is
\emph{Cremona reduced}
if $d \geq \sum_{i=1}^{r+1} m_i$;
this ensures that the Cremona transformations $\phi_I$
do not reduce the degree, for any set of $r+1$ indices $I$.

In \cite{DM2} the authors investigated the relationship between these concepts,
primarily in the case when $\langle F,F\rangle > 0$;
this is the case when $Y^r_s$ is a \emph{Mori Dream Space},
and the Weyl group is finite.
In this article we will focus primarily on the other cases,
although we will be able to draw some conclusions in the Mori Dream Space cases as well.

The approach is as follows.
Take the line class through two points $h-e_1-e_2$ (in the $i = -1$ case),
the line class through one point $h-e_1$ (in the $i=0$ case),
or the general line class $h$ (if $i=1$).

Consider the class $\mod F$ in $A^{r-1}/\langle F \rangle$.
The first step is to determine whether the class $\mod F$ is in the Tits cone,
by using Proposition \ref{cremonareduction}.

If the class is in the Tits cone, determine the class $c^+\;\mod F$ in the nonnegative chamber $\bar\calC$ that is in its orbit.
Lift the analysis to $A^{r-1}$ and classify all Cremona reduced classes there that are in the orbit of the line class.

If the class is not in the Tits cone, we conclude that no Cremona reduced class can be in the orbit of that degree one class in $A^{r-1}$.

\subsection{The Weyl group orbit of the line through two points: planar and finite Weyl group case}
Our primary example will be the line $L_{12}$ through the first two points,
whose class is $h-e_1-e_2$; $d=m_1=m_2=1$, and all other multiplicities are zero.
We note that $h-e_1-e_2 \cdot X_0 = -1$ and hence this class $(\mod F)$ is not in the positive chamber; it is not Cremona reduced.
In the finite Weyl group case, every class is in the Tits cone,
and so the algorithm of Proposition \ref{cremonareduction}
will terminate in a Cremona reduced class $R_{r,s} \;\mod F$ in the orbit of $h-e_1-e_2$.
It turns out that this happens for $r=2$ and all $s \geq 3$ as well.
We present the results for the various cases below.
We also present the subset $X$ for which $R_{r,x} \in \calC_X$,
where $\calC$ is positive chamber for the Weyl group action.
\[
\begin{matrix}
r, s & R_{r,s}=(d;\underline{m})\; \mod F & \text{ subset }X \\ \hline
r=2, s =3: & (0; 0,0, -1) & \{X_1\} \\
r=2, s \geq 4: & (0; 0^{s-1}, -1) & \{X_{i\neq s-1}\} \\
r \geq 3, s=r+1: & (2-r;0,0,(-1)^{r-1}) & \{X_{i \neq 0,2}\}\\
r \geq 3, s=r+2: & (2-r; 0,0,0,(-1)^{r-1}) & \{X_{i\neq 3}\}\\
r=2k \geq 2, s=r+3: & ((2k+1)(1-k); (1-k)^{r+2}, -k) & \{X_{i \neq s-1}\}\\
r=2k+1 \geq 3, s= r+3: & (-2k^2-2k+1; (-k)^{r+3}) & \{X_{i \neq 0}\}\\
r=3, s=7: & (-3;0,(-1)^6) & \{X_{i\neq 1}\}\\
r=4, s=8: & (-14;-2,(-3)^7) & \{X_{i\neq 1}\}\\
\end{matrix}
\]

We note that in all cases, the subgroup $W_X$ fixes the Cremona reduced element $R_{r,s}$ as expected.

Now assume $r=2$ or $r \geq 3$ and \eqref{finiteWeylrs} holds,
i.e., the Weyl group is finite.

Suppose that $c$ is a numerical $(-1)$-Weyl curve class in $A^{r-1}$;
i.e., $\langle c, F\rangle = 3-r$ and $\langle c, c \rangle = 3-2r$.
To detect whether $c$ is the class of a $(-1)$-Weyl line,
we apply the algorithm of Proposition \ref{cremonareduction}
and arrive at a class $R$.  We then consider $R \;\mod F\in \bar\calC$.
By Remark \ref{Worbitcheck},
if $c$ is the class of a $(-1)$-Weyl line,
we must have $R = R_{r,s} \;\mod F$,
since the subgroup $W_X$ fixes $R_{r,s}$.
If not, we conclude that $c$ is not the class of a $(-1)$-Weyl line.
If so, then we conclude that $R = R_{r,s}+kF$ for some $k$.
We note that both $R$ and $R_{r,s}$
have the same linear and quadratic invariants;
they are numerical $(-1)$-Weyl lines too.

Now it is elementary that, for these values of the invariants,
if $\alpha$ is any curve class,
such that $\langle \alpha+kF,F\rangle = \langle \alpha,F\rangle$
and $\langle \alpha+kF,\alpha+kF \rangle = \langle \alpha,\alpha\rangle$,
then we must have $k=0$.
(Indeed, the linear invariant equation suffices unless $r=2$ and $s=9$,
since this is the only case where $\langle F,F\rangle = 0$.)

We conclude that $R = R_{r,s}$,
and so $c$ reduces to $R_{r,s}$ by applying the reduction algorithm,
and hence reduces to something with non-positive degree and non-positive multiplicities.

Finally we remark that $c$ actually must reduce to the original line class $h-e_1-e_2$
on the way to $R_{r,s}$.
To see this, one makes a case-by-case analysis of each $R_{r,s}$,
asking what classes could reduce to it by applying the Cremona reduction algorithm.
Let us show how this analysis goes in one case; we will leave the others
(which are entirely parallel) to the reader.

For example, consider the $r=3$, $s=7$ case;
here $R_{3,7} = (-3;0,(-1)^6)$ using the vector notation;
we will re-brand this class as $R_{-3}$.
Since the standard Cremona transformation is an involution,
one obtains all classes that could Cremona to $R_{-3}$
by applying the standard Cremona to $R_{-3}$.
There are two choices here (up to symmetries) for choosing the $4$ points to apply the standard Cremona transform: either we include the $0$-point or not.
If we include the $0$-point, the Cremona transform fixes $R_{3,7}$.
If we do not, the Cremona transform leads to $R_{-1}=(-1;0^5,(-1)^2)$.
Hence any class that reduces to $R_{3,7}$ must 'pass through' $R_{-1}$ on the way.

Now apply the same argument to $R_{-1}$;
if the four chosen points include zero of the $(-1)$-points,
the transformation goes back to $R_{-3}$.
If the four chosen points include one of the $(-1)$-points
then it fixes $R_{-1}$
If the four chosen points include two of the $(-1)$-points,
the Cremona brings it to $R_1 = (1;1^2,0^5)$,
which is the original line class.
We then conclude that any class that reduces to $R_{3,7}$ must pass through this original line class along the way.

This happens in each case in the table above; as noted, we leave this elementary check to the reader.

Finally we note that the only class that directly reduces to $h-e_1-e_2$
is the class of the rational normal curve $(r+1;1^{r+3},0^{s-r-3})$;
hence if $c$ is not the line class already,
then it must actually reduce to the RNC class as well, on the way.

We have proved the following.

\begin{proposition}\label{prop:MDS-1case}
Suppose that $r=2$ and $s \geq 3$
or $r,s$ satisfy \eqref{finiteWeylrs}.
Let $c$ be a class in $A^{r-1}$
with positive degree and non-negative multiplicities,
such that $\langle c, F \rangle = 3-r$.
Then:
\begin{itemize}
\item[(a)] If $c$ is Cremona reduced, it is not the class of a $(-1)$-Weyl line.
\item[(b)] If $c$ is not Cremona reduced, 
let $R$ be the result of applying the Cremona reduction algorithm.  
If $r\neq 2$ or $r =2$ and $s \neq 9$, 
then $c$ is the class of a $(-1)$-Weyl line
if and only if $R=R_{r,s}$,
which is equivalent to $c$ Cremona-reducing to $h-e_1-e_2$
via the reduction algorithm.
If $c \neq h-e_1-e_2$, then this happens
if and only if $c$ reduces to the class of the rational normal curve through $r+3$ points.
\item[(c)] If $r=2$ and $s=9$, and $\langle c, c \rangle =3-2r$,
then $c$ is the class of a $(-1)$-Weyl line
if and only if $R=R_{r,s}$,
which is equivalent to $c$ Cremona-reducing to $h-e_1-e_2$
via the reduction algorithm.
If $c \neq h-e_1-e_2$, then this happens
if and only if $c$ reduces to the class of the rational normal curve through $r+3$ points.
\end{itemize}
\end{proposition}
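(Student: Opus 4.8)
The plan is to leverage the Coxeter/Weyl machinery set up above, whose governing fact is that the reduction algorithm of Proposition \ref{cremonareduction} terminates on the relevant classes: for $r\geq 3$ under \eqref{finiteWeylrs} the Weyl group is finite, so by Theorem \ref{FiniteWeyl} the Tits cone is all of $(V^{r-1})^*$ and every class reduces; for $r=2$ one argues directly that the line orbit (hence every class Weyl-equivalent to it) lies in the Tits cone for all $s\geq 3$, and if the algorithm fails to terminate for a given $c$, then $c$ is not in the Tits cone and so cannot be Weyl-equivalent to $h-e_1-e_2$, which is consistent with the desired equivalence. As a preliminary I would run the algorithm on $h-e_1-e_2$ in each admissible $(r,s)$ to produce the Cremona reduced representatives $R_{r,s}$ recorded in the table, and note the key numerical feature that every $R_{r,s}$ has \emph{non-positive} degree. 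The heart of the argument is then to compare the reduced form $R$ of a given $c$ against $R_{r,s}$, first $\mod F$ and then honestly.

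For part (a), suppose $c$ is Cremona reduced with positive degree, so $c\mod F$ is its own chamber representative. If $c$ were a $(-1)$-Weyl line, then $c$ is Weyl-equivalent to $h-e_1-e_2$, and Lemma \ref{twoCremonareducedclasses} forces $c\equiv R_{r,s}\mod F$, i.e. $c=R_{r,s}+kF$. Since a Weyl line meets both invariant equations of Corollary \ref{iWeyllineinvariants} and $R_{r,s}$ shares them, I would pin down $k$: the linear equation gives $k\langle F,F\rangle=0$, so $k=0$ whenever $\langle F,F\rangle=(r+1)^2-s(r-1)\neq 0$, which holds in every case except $r=2,\,s=9$. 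Then $c=R_{r,s}$ would have non-positive degree, contradicting $d>0$; hence $c$ is not a Weyl line. The exceptional locus $r=2,\,s=9$ is precisely where $\langle F,F\rangle=0$, which is why part (c) must additionally assume $\langle c,c\rangle=3-2r$: there the quadratic equation yields $0=2k\langle R_{r,s},F\rangle$, and since $\langle R_{r,s},F\rangle=3-r\neq0$ this again forces $k=0$.

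For part (b), with $c$ not Cremona reduced I would run the algorithm to obtain $R$ and invoke Remark \ref{Worbitcheck}: $c$ is a $(-1)$-Weyl line exactly when $c\mod F$ and $(h-e_1-e_2)\mod F$ have the same chamber representative, i.e. when $R\equiv R_{r,s}\mod F$; the invariant computation of the previous paragraph upgrades this to the honest equality $R=R_{r,s}$ (with $r=2,\,s=9$ deferred to part (c)). This gives the stated equivalence. To sharpen it to the claims that $c$ must pass through $h-e_1-e_2$, and through the rational normal curve class $(r+1;1^{r+3},0^{s-r-3})$ when $c\neq h-e_1-e_2$, I would run a backward analysis: since $\phi$ is an involution, the classes reducing to $R_{r,s}$ in one step are obtained by applying the standard Cremona over each choice (up to symmetry) of $r+1$ indices to $R_{r,s}$ itself, and I would follow these finitely many branches back — exactly as displayed for $r=3,\,s=7$ — verifying that every branch funnels through $h-e_1-e_2$, with a final direct check that the unique class reducing to $h-e_1-e_2$ in one step is the rational normal curve class.

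The main obstacle is twofold. Conceptually, the delicate point is the passage from congruence $\mod F$ to equality in $A^{r-1}$: this rests entirely on the non-degeneracy of $\langle F,F\rangle$, and the single degenerate case $r=2,\,s=9$ genuinely requires the extra quadratic hypothesis, so the bookkeeping there must be kept separate. Computationally, the laborious part is the backward tracing establishing the sharp ``passes through the RNC'' statements: this must be repeated for each entry of the $R_{r,s}$ table, and although each case is elementary and parallel to the $r=3,\,s=7$ computation, confirming that \emph{every} branch returns to $h-e_1-e_2$ is where the real patience is spent.
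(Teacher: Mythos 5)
Your proposal is correct and follows essentially the same route as the paper: compute the table of reduced representatives $R_{r,s}$ (all of non-positive degree), compare chamber representatives $\mod F$ via Lemma \ref{twoCremonareducedclasses} and Remark \ref{Worbitcheck}, eliminate the $kF$ ambiguity by the linear invariant (and by the quadratic one in the degenerate case $r=2$, $s=9$, where $\langle F,F\rangle=0$), and trace the finitely many Cremona branches backward from $R_{r,s}$ to show every reduction passes through the rational normal curve class and then through $h-e_1-e_2$, exactly as the paper illustrates for $r=3$, $s=7$. One cosmetic remark: the rational normal curve class has degree $r$, i.e.\ $(r;1^{r+3},0^{s-r-3})$ — your notation $(r+1;1^{r+3},0^{s-r-3})$ reproduces a typo present in the paper itself.
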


The content here is that if a class is such that both the linear and quadratic invariant
is the same as line through two points, it is either reducible by Cremona,
or it is Cremona reduced, and is in fact not a Weyl line.

The Proposition above indicates that we must apply the Cremona reduction algorithm to determine whether a class is the class of a $(-1)$-Weyl line.
Of course necessary conditions are that the class have the correct linear and quadratic invariants, and (in the $r,s$ cases considered above), not be Cremona reduced if the degree and multiplicities are non-negative.  These are not sufficient conditions however, as the following examples show.

\begin{example}\label{examplesd=7and13}
\begin{itemize}
\item[(a)]
We recall from \cite{DM2} that
the curve $G=7h-4e_1-\sum_{i=2}^{11} e_i$ is not a $(-1)$-Weyl line
even though it satisfies the linear and quadratic equalities
$\langle G, F\rangle = 3-r$
and $\langle G, G\rangle = 3-2r$.
(It is Cremona reduced.)
We observe that $G$ fails the projection inequality \eqref{projectioninequalityiclass} 
in $\bbP^3$.

\item[(b)] We observe that 
even if a curve satisfies the linear and quadratic equalities
 and the projection inequality 
it is not enough to guarantee that the curve class is a $(-1)$ Weyl line.
Indeed, consider $J:=13h-\sum_{i=1}^5 4e_i -\sum_{i=6}^{11} e_i$ in $\bbP^3$;
it does satisfy the linear and quadratic invariants and the projection inequality \eqref{projectioninequalityiclass}.
However $J$ is not the class of a $(-1)$ Weyl line.
\end{itemize}
\end{example}


\subsection{Lines through two points: infinite Weyl group cases}

Here we want to prove that
$h-e_1-e_2$ is not in the Tits cone.
To see this, we will show that running the algorithm of Proposition \ref{cremonareduction} does not terminate.

\begin{lemma}
Suppose $r\geq 3$, $s \geq r+5$, and $c = (d;m_1,\ldots,m_s)$ is a class in $A^{r-1}$
with $m_i \geq m_{i+1}$ for each $i$, and satisfying
\[
\text{(i): }d < M_1, \;\; \text{(ii): }m_s \geq m_1+t, \;\;\text{and (iii): } 2d \leq M_1+M_{s-r},
\]
where $M_k = \sum_{i=0}^{r} m_{k+i}$ and $t=d-M_1$.
Then if $c'=(d';m_1',\ldots,m_s')$ is the class obtained from $c$ 
by applying $\phi$ and re-ordering the multiplicities in descending order,
then the degree and multiplicities of $c'$ also satisfy the above three inequalities.
\end{lemma}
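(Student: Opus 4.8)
The plan is to follow the numerical characters through $\phi$ and the re-ordering, and then to collapse all three inequalities for $c'$ into a single master inequality. First I would record the transformation. By Proposition~\ref{CremonaAY}(b), applied to the $r+1$ largest multiplicities, the degree becomes $d' = d+(r-1)t = rd-(r-1)M_1$, the top block becomes $m_i+t$ for $1\le i\le r+1$, and the remaining multiplicities are unchanged. Condition~(i) gives $t=d-M_1<0$, so the degree and the top block strictly \emph{decrease}. Condition~(ii), $m_s\ge m_1+t$, says the smallest unchanged multiplicity dominates the largest shifted multiplicity $m_1+t$; hence after re-sorting the sequence is the unchanged block $m_{r+2}\ge\cdots\ge m_s$ followed by the shifted block $m_1+t\ge\cdots\ge m_{r+1}+t$. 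In particular $m_1'=m_{r+2}$, $m_s'=m_{r+1}+t$, and the bottom window of $c'$ is the whole shifted block, so $M_{s-r}'=M_1+(r+1)t=(r+1)d-rM_1$.

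Next I would show that all three inequalities for $c'$ follow from the single statement
\[
M_1' \ge (r-1)d-(r-2)M_1. \qquad(\ast)
\]
A direct substitution of $d'$ and $M_{s-r}'$ shows that inequality~(iii) for $c'$, i.e.\ $2d'\le M_1'+M_{s-r}'$, is \emph{equivalent} to $(\ast)$. Given $(\ast)$, inequality~(i) for $c'$ follows since $M_1'-d'\ge (r-1)d-(r-2)M_1-d'=M_1-d>0$ by~(i). Finally, writing (ii) for $c'$ with $m_1'=m_{r+2}$, $m_s'=m_{r+1}+t$ and $t'=d'-M_1'$ reduces it to $M_1'\ge (r-1)d-(r-2)M_1+(m_{r+2}-m_{r+1})$; since the multiplicities are non-increasing the term $m_{r+2}-m_{r+1}$ is $\le 0$, so this too follows from $(\ast)$. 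Thus the lemma collapses to proving $(\ast)$.

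To prove $(\ast)$ I would split on whether the top window of $c'$ reaches into the shifted block. If $s\ge 2r+2$ the unchanged block has at least $r+1$ entries, so $M_1'=M_{r+2}$; monotonicity gives $M_{r+2}\ge M_{s-r}$, condition~(iii) gives $M_{s-r}\ge 2d-M_1$, and $2d-M_1\ge (r-1)d-(r-2)M_1$ is just $(r-3)(M_1-d)\ge 0$, true since $r\ge 3$ and $M_1>d$. If instead $r+5\le s\le 2r+1$, set $p=2r+2-s$, so the top window of $c'$ is the whole unchanged block together with the top $p$ shifted entries. Expressing $M_1'$ through the total $S=\sum_i m_i$ and rewriting $S$ via $M_{s-r}$ (which overlaps the top block in exactly its bottom $p$ indices $s-r,\dots,r+1$), the difference $\sum_{j\le p}m_j-\sum_{i=s-r}^{r+1}m_i$ is $\ge 0$ because the top $p$ multiplicities dominate any other $p$ of them; combined with $M_{s-r}\ge 2d-M_1$ this reduces $(\ast)$ to $(r-3-p)(d-M_1)\le 0$, which holds precisely because $p\le r-3$ and $d<M_1$.

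The main obstacle is this second case: the re-ordering folds part of the shifted block into the top window, so $M_1'$ is no longer a clean window $M_k$ of the original class, and one must convert the resulting partial sums back into the hypotheses. The two facts that make it work are the monotonicity estimate $\sum_{j\le p}m_j\ge\sum_{i=s-r}^{r+1}m_i$ and the inequality $p\le r-3$, which is exactly where the hypothesis $s\ge r+5$ enters; with these in hand, together with the reductions of the previous paragraph, all three inequalities for $c'$ follow from $(\ast)$.
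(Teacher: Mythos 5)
Your proposal is correct and is essentially the paper's own argument in a lightly reorganized form: the explicit sorted shape of $c'$ (with $d'=rd-(r-1)M_1$, $M_1'$ equal to a sum of $r+1$ distinct original multiplicities plus $pt$, and $M_{s-r}'=M_1+(r+1)t$), the comparison of $M_{s-r}$ with sums of $r+1$ distinct multiplicities, the use of $s\ge r+5$ via $p=2r+2-s\le r-3$, and the addition of multiples of (i) to (iii) all coincide with the paper's steps. Your master inequality $(\ast)$ is precisely the inequality $(r-k-1)d\le (r-k-2)M_1+A$ to which the paper reduces persistence of both (ii) and (iii) (with $M_1'=A+kt$ unpacked), so the only genuine difference is organizational: you also route (i) through $(\ast)$ plus the original (i), where the paper uses the strict variant $(r-k)d<(r-k-1)M_1+A$.
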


\medskip\noindent{\bf Proof:}
The inequality (i) says that the degree strictly decreases,
and (ii) implies that, in descending order, the numerical characters $(d'\underline{m'})$ of $c'$ are:
\begin{gather*}
d'=d+(r-1)t=rd+(r-1)M_1;\\
m_1'=m_{r+2},\ldots,m_{s-r-1}'=m_s,m_{s-r}'=m_1+t, m_{s-r+1}'=m_2+t,\ldots,m_s'=m_{r+1}+t.
\end{gather*}
Since the multiplicities are in descending order, (iii) implies that
\begin{equation}\label{2dM1A}
2d \leq M_1 + A
\end{equation}
where $A$ is any sum of $r+1$ distinct multiplicities,
since $M_{s-r}\leq A$ in that case.
By adding to this any multiple $j$ of (i), we also have that
\begin{equation}\label{jdM1A}
(j+2)d < (j+1)M_1+A
\end{equation}
for any $j \geq 1$, if $A$ is any sum of $r+1$ distinct multiplicities.

We note that 
\[
M_1' = \begin{cases} M_{r+2} & \text{ if } s \geq 2r+2 \\
\sum_{i=r+2}^s m_i + m_1+\ldots+m_k  + kt & \text{ if } k=2r+2-s \geq 1
\end{cases}
\]
In both cases we have that $M_1'=A+kt$ where
$A$ is a sum of $r+1$ distinct multiplicities and $0 \leq k \leq r-3$.
(We define $A$ and $k$ by this,
and we observe that we use $s \geq r+5$ to see that $k \leq r-3$.)

The persistence of (i) requires us to show that $d'<M_1'$,
which is equivalent to $d+(r-1)t < A+kt$, or $(r-k)d< (r-k-1)M_1+A$.
This is (\ref{jdM1A}) with $j=r-k-2$; since $k \leq r-3$, we have $j\geq 1$ as needed.

To show that (ii) persists, we must show that $m_1'+t' \leq m_s'$.
Since $t'=d'-M_1'$, this is equivalent to
$m_{r+2}+(rd-(r-1)M_1)-(A+kt) \leq m_{r+1}+t$, which can be rearranged as
$(r-k-1)d \leq (r-k-2)M_1 +A+ m_{r+1}-m_{r+2}$.
Since $m_{r+1} \geq m_{r+2}$,
it suffices to show that $(r-k-1)d \leq (r-k-2)M_1 +A$;
since $r-k-1 \geq 2$, this follows from (\ref{2dM1A}) (if $k=r-3$) or (\ref{jdM1A}) (if $k < r-3$).

Finally to show that (iii) persists,
we must show that $2d' \leq M_1'+M_{s-r}'$.
We note that $M_{s-r}' = M_1+(r+1)t$, so this is equivalent to
$2(rd-(r-1)M_1) \leq (A+kt) + (M_1+(r+1)t)$.
This can be rearranged as
$(r-k-1)d \leq (r-k-2)M_1 + A$
which is the same inequality we just showed above.

\rightline{QED}

If $s=r+4$ the above proof must be modified as follows.

\begin{lemma}
Suppose $r\geq 5$, $s = r+4$, and $c = (d;m_1,\ldots,m_s)$ is a class in $A^{r-1}$
with $m_i \geq m_{i+1}$ for each $i$, and satisfying
\[
\text{(i): }d < M_1, \;\; \text{(ii): }m_1+t \leq m_{r+4}, \;\;\text{(iii): } d \leq N_1 \;\;\text{and (iv): }
2d \leq M_1+M_4,
\]
where $M_k = \sum_{i=0}^{r} m_{k+i}$,
$N_1 = m_1+m_2+m_3+\sum_{i=7}^{r+4 }m_i$,
and $t=d-M_1$.
Then if $c'=(d';m_1',\ldots,m_s')$ is the class obtained from $c$ 
by applying $\phi$ and re-ordering the multiplicities in descending order,
then the degree and multiplicities of $c'$ also satisfy the above three inequalities.
\end{lemma}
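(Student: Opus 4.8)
The plan is to follow the template of the preceding lemma, tracking how the four inequalities (i)--(iv) behave under $\phi$ followed by a descending reorder; the one genuinely new ingredient is that hypothesis (iii) is engineered to rescue the degenerate case that broke the earlier argument at $s=r+4$. First I would record that (i) forces $t=d-M_1<0$, so applying $\phi$ to the top $r+1$ multiplicities replaces $m_i$ by $m_i+t$ for $i\le r+1$ and fixes $m_{r+2},m_{r+3},m_{r+4}$. Hypothesis (ii), $m_1+t\le m_{r+4}$, guarantees that each transformed multiplicity is at most each untouched one, so the descending reorder places $m_{r+2},m_{r+3},m_{r+4}$ in positions $1,2,3$ and $m_1+t,\ldots,m_{r+1}+t$ in positions $4,\ldots,r+4$. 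Getting this reindexing right is the first delicate point, since now only three multiplicities survive untouched at the top.

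With the order fixed, the core of the argument is to re-express the transformed quantities. A direct count gives
\[
d'=d+(r-1)t,\quad M_1'=A+(r-2)t,\quad M_4'=M_1+(r+1)t,\quad N_1'=M_4+(r-2)t,
\]
where $A=m_1+\cdots+m_{r-2}+m_{r+2}+m_{r+3}+m_{r+4}$ is a sum of $r+1$ distinct multiplicities. The exponent $r-2$ on $t$ in $M_1'$ is exactly the value ``$k=r-2$'' lying just outside the range $k\le r-3$ allowed in the previous proof: there the coefficient of $d$ in each reduced inequality was $(r-k-1)\ge 2$, whereas here it drops to $1$, so one needs a direct bound $d\le A$ that the two-tier inequalities $2d\le M_1+A$ can no longer supply. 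The single order-theoretic fact required is $N_1\le A$, proved by comparing the two $(r+1)$-element index sets term by term: $A$'s indices are pointwise no larger than $N_1$'s, so with multiplicities descending $A\ge N_1$. This comparison is precisely where $r\ge 5$ enters (for $r=5$ the two sums coincide), and it upgrades (iii) to $d\le N_1\le A$.

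The four persistence checks then each collapse to one of these facts. Persistence of (i), $d'<M_1'$, is equivalent to $2d<M_1+A$; this is the crux, being the one output inequality that must be strict, and it follows by adding the strict (i) to (iii): $2d<M_1+N_1\le M_1+A$. Persistence of (iii), $d'\le N_1'$, simplifies via $N_1'=M_4+(r-2)t$ to $2d\le M_1+M_4$, which is exactly (iv). Persistence of (iv), $2d'\le M_1'+M_4'$, simplifies to $d\le A$, immediate from $d\le N_1\le A$; and persistence of (ii) reduces to $d\le A+m_{r+1}-m_{r+2}$, again immediate since $d\le A$ and $m_{r+1}\ge m_{r+2}$.

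I expect the main obstacle to be purely bookkeeping: performing the descending reindexing correctly and then identifying $M_1'$, and especially $N_1'$, as the shifted sums $A+(r-2)t$ and $M_4+(r-2)t$ of specific $(r+1)$-tuples of the original multiplicities. Once those identifications are in hand the conceptual point is clean: the hypothesis (iii), $d\le N_1$, is exactly the direct bound needed so that, together with (i), it still forces a strict drop in degree in the one case ($s=r+4$, $r\ge 5$) where the inequality $2d\le M_1+M_4$ alone no longer does.
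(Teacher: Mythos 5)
Your proof is correct and follows essentially the same route as the paper's: your $A$ is exactly the paper's $N_2=\sum_{i=1}^{r-2}m_i+m_{r+2}+m_{r+3}+m_{r+4}$, your identifications $M_1'=A+(r-2)t$, $N_1'=M_4+(r-2)t$, $M_4'=M_1+(r+1)t$ match the paper's, and your four persistence reductions (to $2d<M_1+A$, to $d\le A+m_{r+1}-m_{r+2}$, to (iv), and to $d\le A$, respectively) are precisely those in the paper, deduced from the same combination of the strict inequality (i) with (iii). Your term-by-term comparison of the two $(r+1)$-element index sets to get $N_1\le A$ merely makes explicit what the paper asserts in the one line ``$N_2\ge N_1$ since $r\ge 5$.''
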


\medskip\noindent{\bf Proof:}
The inequalities (i) and (ii) imply that when we apply $\phi$ and re-order the multiplicities,
the result has the degree strictly decreasing,
with $d' = d+(r-1)t = rd-(r-1)M_1$;
and the multiplicities of $c'$ are
\[
m_1'=m_{r+2},m_2'=m_{r+3},m_{3}'=m_{r+4},m_{4}'=m_1+t, m_{5}'=m_2+t,\ldots,m_{r+4}'=m_{r+1}+t.
\]
Define $N_2 = \sum_{i=1}^{r-2} m_i + m_{r+2}+m_{r+3}+m_{r+4}$,
and observe that $N_2 \geq N_1$ since $r\geq 5$.
We note that
$M_1' = N_2+(r-2)t$,
$N_1' = M_4+(r-2)t$,
and $M_4' = M_1+(r+1)t$.

To prove that (i) persists, we must show that $d' < M_1'$,
which is equivalent to $2d < M_1+N_2$; this follows from (i) and (iii).

To prove that (ii) persists, we must show that $m_1'+t' \leq m_{r+4}'$,
which is equivalent to $d \leq N_2+(m_{r+1}-m_{r+2}$;
this is implied by (iii) and that the multiplicities are in descending order.

To prove that (iii) persists, we must show that $d'\leq N_1'$,
which is equivalent to (iv).

To prove that (iv) persists, we must show that $2d' \leq M_1'+M_4'$,
which is equivalent to $d \leq N_2$, and is implied therefore by (iii).

\rightline{QED}

Since the hypotheses of these two Lemmas 
are satisfied by the class $h-e_1-e_2$,
they imply that the class of a line through two points
is not in the Tits cone,
for the range of parameters that enjoy an infinite Weyl group.
Hence no Cremona reduced class can be in the orbit of this class, and we have proved the analogue of Proposition \ref{prop:MDS-1case} for these cases.
This covers all possible parameters then, and we have the following.

\begin{theorem}\label{thm:noCR-1}
For all parameters $r,s$, with $r \geq 2$ and $s \geq r+1$,
there are no Cremona-reduced numerical $(-1)$-Weyl classes
$c = (d;\underline{m})$
with all $d$, $m_i$ non-negative
which are $(-1)$-Weyl lines, i.e., Cremona-equivalent to $h-e_1-e_2$.
\end{theorem}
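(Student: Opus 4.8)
The plan is to dispose of the theorem by splitting the parameter range $(r,s)$ into the finite and infinite Weyl group regimes, since each regime has already been supplied with exactly the tool it needs. As a preliminary reduction I would note that the degree may be taken positive: if $d=0$, then since $\langle F,c\rangle = (r+1)d-(r-1)\sum_i m_i$, a numerical $(-1)$-class with non-negative multiplicities forces $(r-1)\sum_i m_i = r-3$, which has no solution in non-negative integers except the trivial class $c=0$ (for $r=2$ the right side is negative, for $r=3$ it forces all $m_i=0$, and for $r\geq 4$ the required $\sum_i m_i=(r-3)/(r-1)$ is not an integer). Hence $d\geq 1$ throughout, and we may order the multiplicities so that $m_i\geq m_{i+1}$.

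In the first case, suppose $r=2$ with $s\geq 3$, or $(r,s)$ satisfies \eqref{finiteWeylrs}. Then the hypotheses of Proposition \ref{prop:MDS-1case} are met, and part (a) of that proposition states directly that a Cremona-reduced class of positive degree with non-negative multiplicities and $\langle c,F\rangle = 3-r$ is not the class of a $(-1)$-Weyl line. This settles every finite Weyl group pair, and all of $r=2$, in one stroke, without even invoking the quadratic invariant.

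In the second case, $r\geq 3$ with the Weyl group infinite, i.e. $r=3,\ s\geq 8$; $\ r=4,\ s\geq 9$; or $r\geq 5,\ s\geq r+4$. The key observation is that every such pair satisfies either $s\geq r+5$, where the first Lemma applies, or $s=r+4$ with $r\geq 5$, where the second applies. I would verify that the class $h-e_1-e_2$ — with $d=1$, $m_1=m_2=1$, all other $m_i=0$, and $t=d-M_1=-1$ — satisfies the relevant lemma's hypotheses; in each lemma the decisive inequality $d<M_1$ reads $1<2$, while the remaining conditions reduce to comparisons among zeros. By the persistence established in those lemmas, applying $\phi$ and reordering preserves $d<M_1$, so the reduction algorithm of Proposition \ref{cremonareduction} never reaches a Cremona-reduced class (and the degree strictly decreases at each step, since $t<0$ and $d'=d+(r-1)t$). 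Therefore $h-e_1-e_2 \bmod F$ is not in the Tits cone, whence by Proposition \ref{cremonareduction} no Weyl translate of it is Cremona reduced. In particular no Cremona-reduced class — non-negative or otherwise — lies in its orbit, so none can be a $(-1)$-Weyl line.

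The only genuine bookkeeping, and the step I would treat most carefully, is confirming that the finite regime of \eqref{finiteWeylrs} together with the two lemma ranges $\{s\geq r+5\}$ and $\{s=r+4,\ r\geq 5\}$ exhaust all $(r,s)$ with $r\geq 2$ and $s\geq r+1$ — in particular that the borderline pairs $r=3,\ s=8$ and $r=4,\ s=9$ fall under $s=r+5$, and that $r=3,4$ with $s=r+4$ are already finite and hence handled by the first case. The substantive inductions are pre-packaged in the two lemmas and in Proposition \ref{prop:MDS-1case}, so once this exhaustiveness is checked the two cases combine immediately to give the theorem for all admissible $(r,s)$.
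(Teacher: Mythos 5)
Your proposal is correct and takes essentially the same route as the paper: the finite Weyl group regime (including all of $r=2$) is dispatched by Proposition \ref{prop:MDS-1case}(a), and the infinite regime by checking that $h-e_1-e_2$ satisfies the hypotheses of the two persistence lemmas (first lemma for $s\geq r+5$, second for $s=r+4$, $r\geq 5$), so that the reduction algorithm never terminates and the class lies outside the Tits cone. Your exhaustiveness bookkeeping for the parameter ranges, including the borderline pairs, matches what the paper leaves implicit.
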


Note that we are not saying that there are no curve classes with all $d, m_i$ non-negative that are classes of $(-1)$-Weyl lines; there are infinitely many.
The class $(d;\underline{m}) = (1;1^2)$ is one of course;
so is the class of the rational normal curve
$(r+1;1^{r+3})$ in $\bbP^r$ through $r+3$ general points.
If we allow $s$ to increase, we can find such classes with arbitrarily high degree
using Cremona transformations.

This also gives us the following.

\begin{proposition}\label{monotonic}
Suppose that $c$ is a $(-1)$-Weyl line class with non-negative degree and multiplicities,
in the orbit of $h-e_1-e_2$.
Then $c$ is not Cremona reduced.
If the degree is larger than $r$,
the algorithm of re-ordering the multiplicities in non-increasing order
and applying the standard Cremona transformation $\phi$ to the $r+1$ largest multiplicities
will strictly reduce the degree, and can be applied iteratively
until one arrives at the class of the rational normal curve through $r+3$ points.
That class reduces to $h-e_1-e_2$ with one more application of $\phi$.
\end{proposition}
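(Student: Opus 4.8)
The plan is to combine the numerical analysis of Lemma \ref{numericallemma} with the non-existence of Cremona-reduced $(-1)$-Weyl classes established in Theorem \ref{thm:noCR-1}. First I would record that since $c$ is a $(-1)$-Weyl line class with non-negative degree and multiplicities, it is in particular a numerical $(-1)$-class satisfying $\langle F,c\rangle = 3-r$ and $\langle c,c\rangle = 3-2r$ (by Corollary \ref{iWeyllineinvariants}); so parts (a)--(e) of Lemma \ref{numericallemma} all apply. If the degree equals one, then by Lemma \ref{numericallemma}(a) the class is already $h-e_1-e_2$ and there is nothing to reduce. If the degree is larger than one, Lemma \ref{numericallemma}(b) forces $d \geq r$.

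The key claim is that such a class cannot be Cremona reduced. This is exactly Theorem \ref{thm:noCR-1}: there are no Cremona-reduced numerical $(-1)$-Weyl classes with all $d,m_i$ non-negative that are $(-1)$-Weyl lines. Hence $c$ is not Cremona reduced, which by definition means $d < \sum_{i=1}^{r+1} m_i$ once the multiplicities are put in non-increasing order. I would then invoke Lemma \ref{numericallemma}(e): applying the standard Cremona transformation $\phi$ to the $r+1$ largest multiplicities produces a new class $c'$ that again has positive degree and non-negative multiplicities, and which represents the (irreducible) image curve, hence is again a $(-1)$-Weyl line class. The degree strictly drops because the class was not Cremona reduced (this is the content of Proposition \ref{CremonaAY}(b) together with the reduction being ``increasing'' in the sense of Remark \ref{rem:monotonic}).

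Now I would set up the induction on the degree. Because $\phi$ applied to a non-Cremona-reduced class strictly reduces $d$, and the transformed class is again a non-negative $(-1)$-Weyl line class, I can iterate. At each stage the class is either of degree one (done, it is $h-e_1-e_2$) or of degree at least $r$ and not Cremona reduced (again by Theorem \ref{thm:noCR-1}), so $\phi$ can be applied again and the degree strictly decreases. Since the degree is a non-negative integer that strictly decreases at each step, the process terminates. By Lemma \ref{numericallemma}(b), the smallest degree greater than one that can occur is $d=r$, and in that case the class is exactly the rational normal curve class $(r;1^{r+3},0^{s-r-3})$; this is the last class reached before degree one. Finally, I would note that the rational normal curve class is not Cremona reduced ($d=r < r+1 = \sum_{i=1}^{r+1} m_i$), and a direct computation with $\phi$ (applied to $r+1$ of the $r+3$ multiplicities equal to one) sends it to $h-e_1-e_2$; indeed $t_{r-1} = r-(r+1) = -1$, so the degree becomes $r+(r-1)(-1)=1$ and the multiplicities drop appropriately.

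The main obstacle is not any single computation but making sure the inductive descent is clean: one must verify that after each application of $\phi$ the reordered class still satisfies the hypotheses of Lemma \ref{numericallemma} (non-negative degree and multiplicities, with $d \geq m_1$) so that parts (b) and (e) remain available, and that the numerical invariants $\langle F,c\rangle$ and $\langle c,c\rangle$ are preserved (which they are, by Cremona-invariance of the bilinear form). The cleanest route is to observe that the whole descent is precisely the Cremona reduction algorithm of Proposition \ref{cremonareduction}, and that the monotonicity of the degree along this algorithm is guaranteed by Remark \ref{rem:monotonic} once we know — via Theorem \ref{thm:noCR-1} — that the algorithm never halts at a non-negative Cremona-reduced class other than by first passing through the rational normal curve and then $h-e_1-e_2$.
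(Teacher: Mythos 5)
Your overall route is exactly the paper's: Corollary \ref{iWeyllineinvariants} plus Lemma \ref{numericallemma}(a),(b),(e) supply the numerical control, Theorem \ref{thm:noCR-1} shows that no class in the orbit with non-negative characters can be Cremona reduced, and since $\phi$ lies in the Weyl group and preserves the invariants, each transformed class is again a $(-1)$-Weyl line class with positive degree and non-negative multiplicities, so the descent of Proposition \ref{cremonareduction} strictly decreases the degree and must terminate at degree one, where Lemma \ref{numericallemma}(a) identifies the line class. Up to that point your argument matches the paper's (very terse) proof, and you in fact supply more detail, including the needed check that the reordered class still satisfies the hypotheses of Lemma \ref{numericallemma}.

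There is, however, one genuine jump: your justification that the descent passes \emph{through} the rational normal curve. You write that since the degree strictly decreases and the smallest possible degree greater than one is $d=r$ by Lemma \ref{numericallemma}(b), the RNC ``is the last class reached before degree one.'' Strict decrease does not force the degree sequence to attain the value $r$: a priori a class of degree $d>r$ could map directly to degree $1$ in a single step. To close this, use that $\phi$ is an involution: if $\phi$ (applied to the $r+1$ largest multiplicities, index set $I$) sends a class $c'$ of degree greater than one to a permutation of $h-e_1-e_2$, then $c'$ is obtained by applying $\phi_I$ to that line class, and there are only three cases according to how many of its two base points lie in $I$: both in $I$ gives degree $2-r<0$; exactly one in $I$ fixes the line (degree $1$); none in $I$ gives $(r;1^{r+3},0^{s-r-3})$. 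Since $c'$ has degree strictly larger than one and non-negative characters, $c'$ must be the RNC class. This is precisely the elementary check the paper performs just before Proposition \ref{prop:MDS-1case} (``the only class that directly reduces to $h-e_1-e_2$ is the class of the rational normal curve''); you carry out the computation in the direction $\phi(\mathrm{RNC})=h-e_1-e_2$ but never the converse, which is what the claim actually requires. With that one observation inserted, your proof is complete and coincides with the paper's.
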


This is simply the content of Lemma \ref{numericallemma}(d),
after observing that we now know that all classes are non-Cremona-reduced.
The importance of this is that, applying Cremona transformations as above,
one reduces a $(-1)$-Weyl line class 'monotonically' to the class of a rational normal curve.

\subsection{The case of a line through one point}
The same principles and arguments can be applied to other interesting curve classes in $A^{r-1}$.
The main difference is that this class is in the Tits cone, so the analysis must be adjusted appropriately; it is more elementary in fact.

We first take up the case of the line through one point,  whose class is $h-e_1$.
This class represents a rational curve in $\bbP^r$ with trivial normal bundle.

\begin{theorem}\label{thm:0weyllines}
Fix $r\geq 3$.
Then the only Cremona-reduced classes $c = (d;\underline{m})$
(with all $d$, $m_i$ non-negative)
which are Cremona-equivalent to $h-e_1$
are the classes $h-e_i$ for $1 \leq i \leq s$.
Any $(0)$-Weyl line with positive degree and non-negative multiplicities
may be brought to $h-e_1$ via a series of Cremona transformations,
each of which monotonically decreases the degree,
followed by a single permutation.
\end{theorem}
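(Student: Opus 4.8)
The plan is to mirror the treatment of the line through two points, with the crucial simplification that $h-e_1$ already sits in the non-negative chamber. First I would record that $h-e_1=(1;1,0,\ldots,0)$ is Cremona reduced, and locate it in the Coxeter picture by pairing against the generators $X_0=H-\sum_{i=1}^{r+1}E_i$ and $X_i=E_i-E_{i+1}$: one finds $(h-e_1)\cdot X_0=0$, $(h-e_1)\cdot X_1=1$, and $(h-e_1)\cdot X_i=0$ for $i\geq 2$. Hence $h-e_1 \mod F$ lies in the face $\calC_X$ with $X=\{x_i : i\neq 1\}$, and in particular it is in the Tits cone; since the whole orbit lies in the Tits cone, Remark \ref{rem:monotonic} already guarantees that the reduction algorithm of Proposition \ref{cremonareduction} terminates on every $(0)$-Weyl line.

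To classify the Cremona-reduced representatives, take any Cremona-reduced class $c$ with non-negative entries that is Cremona-equivalent to $h-e_1$, and apply a permutation (an element of $W$) to put its multiplicities in non-increasing order, obtaining $c'$, which is then Cremona reduced in the ordered sense. Both $c'$ and $h-e_1$ are Cremona reduced, ordered, and Cremona-equivalent, so Lemma \ref{twoCremonareducedclasses} forces $c'\equiv h-e_1\mod F$, i.e. $c'=h-e_1+kF$ for some integer $k$. To pin down $k$, I would use that $c'$, lying in the orbit, has the invariants of Corollary \ref{iWeyllineinvariants}: $\langle c',F\rangle=2$ and $\langle c',c'\rangle=2-r$. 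Since $\langle h-e_1,F\rangle=2$ and $\langle F,F\rangle=(r+1)^2-s(r-1)$, the linear invariant reads $k\langle F,F\rangle=0$, which gives $k=0$ whenever $\langle F,F\rangle\neq 0$. In the residual cases $\langle F,F\rangle=0$ (only $(r,s)=(3,8),(5,9)$ for $r\geq 3$), expanding $\langle c',c'\rangle=(2-r)+4k+k^2\langle F,F\rangle$ collapses the quadratic invariant to $4k=0$, again yielding $k=0$. Thus $c'=h-e_1$, so $c=h-e_i$ for some $i$; conversely each $h-e_i$ is a permutation of $h-e_1$ and is visibly Cremona reduced.

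For the monotonic reduction I would establish the $(0)$-analogue of Lemma \ref{numericallemma}. Rewriting the invariants as $(r-1)\sum_i m_i=(r+1)d-2$ and $(r-1)\sum_i m_i^2=d^2+(r-2)$ and using $\sum m_i\leq \sum m_i^2$ gives $(d-1)(d-r)\geq 0$, so a $(0)$-Weyl line of degree $>1$ satisfies $d\geq r$. A Cauchy--Schwarz bound on the number $k$ of positive multiplicities, $(\sum m_i)^2\leq k\sum m_i^2$, reduces to $((r+1)d-2)^2\geq (r+1)(r-1)(d^2+r-2)$, i.e. $2(r+1)d(d-2)+4-(r^2-1)(r-2)\geq 0$, which holds for all $d\geq r$; hence $k\geq r+1$, so the curve passes through at least $r+1$ general points and is non-degenerate. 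By the classification, a $(0)$-Weyl line with non-negative entries and $d>1$ is not Cremona reduced, so some $\phi_I$ strictly lowers the degree; choosing $I$ to be the $r+1$ largest multiplicities and invoking Lemma \ref{irrednondeg} (via non-degeneracy) keeps all multiplicities non-negative, exactly as in Lemma \ref{numericallemma}(e). Iterating, the degree strictly decreases through integer values each equal to $1$ or $\geq r$, so it must reach $1$, where the class is some $h-e_i$; a single permutation then sends it to $h-e_1$.

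The step I expect to be the main obstacle is the monotonicity/non-negativity claim: the abstract Coxeter reduction only controls classes modulo $F$, so to ensure that every intermediate class keeps non-negative multiplicities (and hence remains a genuine curve class to which Lemma \ref{irrednondeg} applies) one really needs the non-degeneracy input, namely the Cauchy--Schwarz bound $k\geq r+1$. By contrast, the degenerate cases $\langle F,F\rangle=0$ in the classification are only a minor wrinkle, dispatched by passing from the linear to the quadratic invariant.
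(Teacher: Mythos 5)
Your proposal is correct, and its first half --- the classification of Cremona-reduced representatives --- is exactly the paper's argument: order the multiplicities, invoke Lemma \ref{twoCremonareducedclasses} to get $c'=h-e_1+kF$, kill $k$ with the linear invariant when $\langle F,F\rangle\neq 0$, and with the quadratic invariant (the cross term $4k$, since the $k^2$ term vanishes) when $\langle F,F\rangle=0$; your explicit enumeration of the degenerate pairs $(r,s)=(3,8),(5,9)$ is a harmless extra the paper does not bother with. Where you genuinely diverge is the monotone-reduction statement: the paper disposes of it in a single line by citing the Cremona reduction algorithm (Proposition \ref{cremonareduction} together with Remark \ref{rem:monotonic}) --- since $h-e_1 \bmod F$ lies in $\bar\calC$, the whole orbit is in the Tits cone, the algorithm terminates in a Cremona-reduced class, and by the first half that class is some $h-e_j$. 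You instead rebuild the $(0)$-analogue of Lemma \ref{numericallemma}: the two invariants give $(d-1)(d-r)\geq 0$, so $d=1$ or $d\geq r$; Cauchy--Schwarz gives at least $r+1$ positive multiplicities (your discriminant computation is right: $2(r+1)d(d-2)+4-(r^2-1)(r-2)\geq (r-2)(r+1)^2+4>0$ for $d\geq r$), hence non-degeneracy; and Lemma \ref{irrednondeg} then keeps every intermediate class's multiplicities non-negative, so the strictly decreasing degree must land on $1$ and the class on some $h-e_j$. What your route buys is exactly what you flag at the end: the paper's one-line argument controls the reduction only modulo $F$ and never verifies that the intermediate classes remain genuine non-negative curve classes --- the paper establishes that refinement only in the $(-1)$ case (Lemma \ref{numericallemma} and Proposition \ref{monotonic}). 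So your version proves slightly more than the paper's stated proof does, at the cost of redoing the numerical lemma for $i=0$; note, though, that the literal statement of the theorem asserts only monotonicity of the degree, so the non-negativity of intermediate multiplicities, while a worthwhile strengthening, is not strictly required.
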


\begin{proof}
Take $c$ and permute the multiplicities to be in non-increasing order.
Then both $h-e_1 \mod F$ and $c\mod F$
are in the closure $\bar\calC$ of the positive chamber, and so we may apply
Lemma \ref{twoCremonareducedclasses}
and conclude (since they are Cremona equivalent)
that they are equal $\mod F$.
Hence we may write $c = aF + (h-e_1)$ for some integers $a \geq 0$.

Since $\langle c,F\rangle = \langle h-e_1,F\rangle = 2$,
this forces $a\langle F,F\rangle = 0$.
If $\langle F, F \rangle \neq 0$, we conclude that $a=0$.

Suppose that $\langle F, F \rangle = 0$.
Then $2-r = \langle c,c\rangle = \langle h-e_1,h-e_1\rangle + 2a\langle h-e_1,F\rangle
= 2-r +4a$
and we conclude $a=0$ in this case as well.

Hence $c = h-e_1$ after permuting the multiplicities,
and so $c = h-e_j$ for some $j$ as claimed.

The final statement follows from applying the Cremona reduction algorithm.

\end{proof}


\subsection{The case of a line through zero points}
A parallel (even easier) argument can be applied to the line class $h$ also:

\begin{theorem}\label{thm:1weyllines}
Fix $r\geq 3$.
Then the only Cremona-reduced classes $c = (d;\underline{m})$
(with all $d$, $m_i$ non-negative)
which are Cremona-equivalent to $h$
is the class $h$.
Any $(1)$-Weyl line with positive degree and non-negative multiplicities
may be brought to $h$ via a series of Cremona transformations,
each of which monotonically decreases the degree.

\end{theorem}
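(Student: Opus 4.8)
The plan is to transcribe the proof of Theorem \ref{thm:0weyllines} almost verbatim, the only genuine simplification being that the target class $h=(1;0^s)$ is a single class with all multiplicities zero (there is no symmetric family $h-e_j$ to track), and is automatically Cremona-reduced with multiplicities in non-increasing order. First I would record the two Cremona-invariants of $h$, which are the $i=1$ values of Corollary \ref{iWeyllineinvariants}: evaluating the bilinear form on the basis gives
\[
\langle h,F\rangle = r+1 \qquad\text{and}\qquad \langle h,h\rangle = 1.
\]

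For the classification, let $c=(d;\underline m)$ be a Cremona-reduced class with non-negative entries that is Cremona-equivalent to $h$. Reordering its multiplicities into non-increasing order does not disturb Cremona-reducedness, so afterwards both $c \bmod F$ and $h \bmod F$ lie in the closed positive chamber $\bar{\calC}$. Since they are Weyl-equivalent, Lemma \ref{twoCremonareducedclasses} forces $c \equiv h \bmod F$, i.e.\ $c = h + aF$ for some integer $a$, and it remains only to show $a=0$.

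Here I would pin down $a$ with the Cremona-invariant linear and quadratic forms, exactly as in the $h-e_1$ case. The linear functional gives
\[
r+1 = \langle c,F\rangle = \langle h,F\rangle + a\langle F,F\rangle = (r+1)+a\langle F,F\rangle,
\]
so $a\langle F,F\rangle = 0$; when $\langle F,F\rangle\neq 0$ this already yields $a=0$. The only point that needs separate handling is the degenerate case $\langle F,F\rangle=0$, and this is where the quadratic invariant closes the gap: using Cremona-invariance one has $\langle c,c\rangle=\langle h,h\rangle=1$, so
\[
1 = \langle h+aF,h+aF\rangle = \langle h,h\rangle + 2a\langle h,F\rangle + a^2\langle F,F\rangle = 1+2a(r+1),
\]
and since $r+1\neq 0$ we again obtain $a=0$. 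In every case $c=h$, which is the first assertion.

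For the final statement I would argue as in Theorem \ref{thm:0weyllines}: any $(1)$-Weyl line lies in the orbit of $h$, hence in the Tits cone, so the reduction algorithm of Proposition \ref{cremonareduction} terminates; by Remark \ref{rem:monotonic} each Cremona step applied while the class is not yet reduced strictly lowers the degree, and the classification just proved identifies the terminal Cremona-reduced class as $h$. I expect no real obstacle here --- the argument is, as the paper notes, even cleaner than the $h-e_1$ case because the reduced target is unique and carries no nonzero multiplicities; the single subtlety worth flagging is the $\langle F,F\rangle=0$ branch, dispatched immediately above by the quadratic invariant.
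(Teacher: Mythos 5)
Your proof is correct and follows essentially the same route as the paper's: reduce modulo $F$ via Lemma \ref{twoCremonareducedclasses} to write $c = h + aF$, then force $a=0$ using the linear invariant when $\langle F,F\rangle \neq 0$ and the quadratic invariant $1 = 1 + 2a(r+1)$ when $\langle F,F\rangle = 0$. Your handling of the monotonicity statement via Proposition \ref{cremonareduction} and Remark \ref{rem:monotonic} is exactly what the paper intends, so there is nothing to add.
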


\begin{proof}
Suppose that $c$ is such a class; permute the multiplicities again to be in non-increasing order.
Then as above we conclude that $c = h \;\mod F$,
and we may write $c = aF + h$ for some $a \geq 0$.

Using the bilinear form pairing with $F$ gives that
$r+1 = \langle c, F\rangle = \langle h,F\rangle + a\langle F,F\rangle
= r+1+ a\langle F,F\rangle$,
so we conclude $a=0$ if $\langle F,F\rangle \neq 0$.

If $\langle F,F \rangle = 0$, then
$1 = \langle c,c\rangle = \langle h+aF,h+aF\rangle = 1 + 2a\langle h,F\rangle = 1+2(r+1)a$,
and we conclude that $a=0$ in this case as well.
\end{proof}

\subsection{The Projection Inequality for $(i)$-Weyl lines}

Fix $i \in \{-1,0,1\}$, and suppose that $C$ is an $(i)$-Weyl line,
Cremona equivalent to $h-e_1-e_2$, $h-e_1$, or $h$.

\begin{corollary}\label{projinequalityalliWeyllines}
Fix $i \in \{-1,0,1\}$ and suppose that $c$ is an $(i)$-Weyl line class with non-negative degree and multiplicities.
Assume $c \neq h-e_i-e_j$, a degree one $(-1)$-Weyl line.
Then $c$ satisfies the projection inequality \eqref{projectioninequalityicurves},
(equivalently \eqref{bilinearprojinequality})
i.e.
$d + m_1 \leq \sum_{j=2}^s m_j +i$
or
$\langle c, h-e_1\rangle \geq 1$.
\end{corollary}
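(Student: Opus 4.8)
The plan is to establish the projection inequality for all $(i)$-Weyl lines by leveraging the monotonic reduction results just proved (Proposition \ref{monotonic} for $i=-1$, and Theorems \ref{thm:0weyllines} and \ref{thm:1weyllines} for $i=0,1$) together with the persistence statement of Proposition \ref{cremonaprojection}. The key observation is that an $(i)$-Weyl line class $c$ with non-negative degree and multiplicities reduces, by a sequence of monotonically degree-decreasing Cremona transformations, down to the prototype line class ($h-e_1-e_2$, $h-e_1$, or $h$). My strategy is to run this reduction chain \emph{in reverse}: the prototype line class trivially satisfies the projection inequality \eqref{projectioninequalityicurves}, and I want to show the inequality is carried forward at each step as the degree increases back up to $c$.

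First I would record the base case. For the prototype classes $h-e_1$ (with $i=0$) and $h$ (with $i=1$), the projection inequality $d+m_1 \leq \sum_{j\geq 2} m_j + i$ holds directly by inspection. For $i=-1$ the situation is slightly different, since the prototype $h-e_1-e_2$ is excluded by hypothesis; here the relevant base of the reduction is the rational normal curve class $(r+1;1^{r+3},0^{s-r-3})$, which one checks satisfies the inequality. Next I would invoke the reduction theorems: any $(i)$-Weyl line $c$ (other than the excluded degree-one $(-1)$-case) arises from the base class by a sequence $c_0 \to c_1 \to \cdots \to c_N = c$ of standard Cremona transformations, each of which \emph{increases} the degree (reading the monotonic reduction backwards), so that $d(c_{k+1}) \geq d(c_k)$ at each step. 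This is precisely the hypothesis $d' \geq d$ required by Proposition \ref{cremonaprojection}.

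The core of the argument is then a direct induction on $N$ using Proposition \ref{cremonaprojection}: the base class satisfies \eqref{projectioninequalityicurves}, and since each forward step is an increasing Cremona transformation, Proposition \ref{cremonaprojection} guarantees the projection inequality persists to $c_{k+1}$. After $N$ steps we conclude that $c$ itself satisfies the inequality, which is the claim; the equivalent formulation $\langle c, h-e_1\rangle \geq 1$ then follows from \eqref{bilinearprojinequality}.

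The main obstacle I anticipate is bookkeeping the reorderings of multiplicities. The reduction algorithm of Proposition \ref{cremonareduction} interleaves Cremona transformations with permutations that restore non-increasing order, and Proposition \ref{cremonaprojection} is stated for a Cremona transformation applied to a \emph{specific} index set $I$ without reference to ordering. I would need to check that the projection inequality, as a symmetric system of $s$ inequalities ranging over all $j$, is manifestly invariant under permuting the multiplicities, so that the intervening reorderings are harmless and the "$d' \geq d$" hypothesis is exactly what the monotonic reduction supplies at each step. Verifying this permutation-invariance is routine but must be stated carefully; once it is in hand, the induction closes cleanly.
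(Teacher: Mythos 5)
Your proposal is correct and takes essentially the same route as the paper: induction along the monotonic Cremona reduction (Proposition \ref{monotonic} for $i=-1$, Theorems \ref{thm:0weyllines} and \ref{thm:1weyllines} for $i=0,1$), with Proposition \ref{cremonaprojection} carrying the projection inequality across each degree-increasing step and the rational normal curve, $h-e_1$, and $h$ serving as base cases. One small correction to your base case for $i=-1$: the rational normal curve class is $(r;1^{r+3},0^{s-r-3})$ of degree $r$ (Lemma \ref{numericallemma}(b)), not degree $r+1$ --- with degree $r$ the inequality $d+m_1 \leq \sum_{j\geq 2} m_j - 1$ holds with equality, whereas the degree-$(r+1)$ class you wrote (a typo also appearing elsewhere in the paper) would actually fail it.
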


\begin{proof}
Start with $i=-1$.
This is an induction on the degree $d$,
starting with $d=r$ by Lemma \ref{numericallemma}(b).
For $d=r$, $c$ must be the rational normal curve, which does satisfy (\ref{projectioninequalityicurves}).
The induction step now follows from Proposition \ref{monotonic},
using Proposition \ref{cremonaprojection} to see that the inequality is preserved.

For $i=0$ or $i = 1$, again we apply induction,
using Proposition \ref{cremonaprojection} for the induction step
and this time noting that the statement is true for $h-e_j$ and $h$, as starting points for the induction.  The required monotonicity is provided by Theorems \ref{thm:0weyllines} and \ref{thm:1weyllines}.
\end{proof}

\section{Criteria for $(-1)$-Weyl lines}
We have seen above that if $c$ is the class of a $(-1)$-Weyl line in $A^{r-1}(Y^r_s)$,
then we must have the specific values for the linear and quadratic invariants
and the projection inequality must hold (unless it is the class $h-e_i-e_j$).

We have also seen in Example \ref{examplesd=7and13}(b) 
that these numerical conditions are not sufficient.
The issue there was that the system satisfied the projection inequality, but after one Cremona transformation, it did not.  This leads us to consider a stronger version.

\subsection{The Strong Projection Inequality}
We first recall that the projection inequality for $(i)$-curves \eqref{projectioninequalityicurves}
can be re-written as in \eqref{bilinearprojinequality}:
\[
\langle c,h-e_j\rangle \geq 1 \;\text{ for every } j
\]
and this can be checked by applying it only to the largest multiplicity index $j$.

Given the simplicity of the planar case (see Lemma \ref{planarcase}),
we will assume that $r \geq 3$.

We have already seen, in Corollary \ref{projinequalityalliWeyllines},
that if a class $c$ is an $(i)$-Weyl line class,
then it satisfies the projection inequality
(unless $i=-1$ and the degree equals one).
We can extend this as follows.

\begin{lemma}\label{iWeyllineclassisnumerical}
Let $c\in A^{r-1}(Y^r_s)$ be an $(i)$-Weyl line class 
with positive degree and non-negative multiplicities.
Let $w$ be an element of the Weyl group
such that $\deg(w^{-1}(c))>1$.
Then for every $j$,
\[
\langle c, w(h-e_j) \rangle \geq 1.
\]
\end{lemma}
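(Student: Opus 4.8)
The plan is to push the Weyl element $w$ off of $h-e_j$ and onto $c$ using the invariance of the bilinear form. Since the Weyl group preserves $\langle\cdot,\cdot\rangle$, we have $\langle c, w(h-e_j)\rangle = \langle w^{-1}(c), h-e_j\rangle$. Writing $b:=w^{-1}(c)=d'h-\sum_k m'_k e_k$ and reading off the values of the form on the basis, this pairing equals $d'-(r-1)m'_j$, so the assertion is exactly the projection inequality \eqref{projectioninequalityiclass}, equivalently \eqref{bilinearprojinequality}, for the class $b$: that is, $\langle b, h-e_j\rangle\ge 1$ for every $j$. Now $b$ lies in the same Weyl orbit as $c$, hence is again an $(i)$-Weyl line class, and by hypothesis $\deg(b)=d'>1$. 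The whole statement thus reduces to showing that an $(i)$-Weyl line class of degree greater than one satisfies the projection inequality; this is precisely an extension of Corollary \ref{projinequalityalliWeyllines} from $c$ itself to the (possibly negative) representative $w^{-1}(c)$.

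I would prove this reduced statement by induction on $d'$, transporting the inequality along a degree-lowering reduction. For the multiplicities $m'_j\le 0$ the inequality is automatic, since then $d'-(r-1)m'_j\ge d'\ge 2>1$; only the positive multiplicities are at issue. The base case is $d'=r$: rewriting the two invariants as in Lemma \ref{numericallemma}(b) forces $\sum_k m'_k=\sum_k (m'_k)^2$, and because $m(m-1)\ge 0$ for every integer $m$ this makes each $m'_k\in\{0,1\}$, so $b$ is the rational normal curve $(r;1^{r+3})$ (and $(r;1^{r+2})$, $(r;1^{r+1})$ for $i=0,1$), which satisfies the projection inequality with equality. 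For the inductive step I would apply the standard Cremona $\phi$ to the $r+1$ largest multiplicities to produce $b^-=\phi(b)$ of strictly smaller degree; then $b=\phi(b^-)$ has the larger degree, so Proposition \ref{cremonaprojection} carries the projection inequality from $b^-$ (where it holds by induction) up to $b$. The virtue of running the induction through Proposition \ref{cremonaprojection} is that it never requires $b$ itself to have non-negative multiplicities.

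The main obstacle is ensuring that $b$ is actually reducible, i.e. that $b$ is \emph{not} Cremona reduced, so that $\phi$ strictly lowers the degree. This is where the Tits-cone theory must be invoked. In the infinite Weyl group cases the orbit of the base line is not contained in the Tits cone (the content of the non-termination lemmas preceding Theorem \ref{thm:noCR-1}), so no class in the orbit can be Cremona reduced. In the finite cases Lemma \ref{twoCremonareducedclasses} forces any Cremona-reduced class in the orbit to agree mod $F$ with the tabulated representative $R_{r,s}$; matching the linear and quadratic invariants then forces $b=R_{r,s}$, which has degree $\le 1$, contradicting $\deg(b)>1$ (and for $i=0,1$ the only Cremona-reduced orbit classes are the base lines $h-e_j$ and $h$, by Theorems \ref{thm:0weyllines} and \ref{thm:1weyllines}). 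Either way, a Weyl line of degree $>1$ is never Cremona reduced.

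The remaining point is that the reduced class $b^-$ must land at degree at least $r$, so that the base case or the inductive hypothesis applies. Here I would use the degree dichotomy coming from $\sum_k m_k\le\sum_k m_k^2$, valid for \emph{all} integer multiplicities: every numerical $(i)$-class has degree $\le 1$ or $\ge r$, so no degree strictly between $1$ and $r$ can occur. A reduction to degree exactly $1$ would force $b=\phi(b^-)$ to be the rational normal curve of degree $r$, contradicting $\deg(b)>r$; and a Cauchy--Schwarz bound on the top $r+1$ multiplicities, $M^2\le (r+1)\sum_k (m_k)^2$, rules out $\deg(b^-)\le 0$ as soon as $d'>\sqrt{(r^2-1)(2r-3)}$, leaving only finitely many small degrees to verify by hand for each $r$. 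With these two points secured, the induction delivers $\langle b, h-e_j\rangle\ge 1$ for all $j$, which is the claimed inequality $\langle c, w(h-e_j)\rangle\ge 1$; equivalently, the same reduction shows $b$ has non-negative multiplicities, after which the conclusion is immediate from Corollary \ref{projinequalityalliWeyllines}.
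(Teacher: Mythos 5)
Your first paragraph is precisely the paper's proof: by Weyl-invariance of the bilinear form, $\langle c, w(h-e_j)\rangle = \langle w^{-1}(c), h-e_j\rangle$, and since $w^{-1}(c)$ is again in the orbit of $h-e_1-e_2$, $h-e_1$, or $h$ and has degree greater than one, Corollary \ref{projinequalityalliWeyllines} gives the inequality. The paper stops there. The hypothesis of that Corollary that worries you --- non-negativity of the degree and multiplicities of $w^{-1}(c)$ --- is not actually a gap, because every class in the Weyl orbit is itself an $(i)$-Weyl line class in the paper's sense: as observed in the subsection on irreducible and non-degenerate curves, the Weyl element taking the line class to $w^{-1}(c)$ is realized by a composition of actual standard Cremona transformations carrying the line to an irreducible curve with exactly that class, and the class of an irreducible curve has non-negative degree and multiplicities (this is the same mechanism as in Lemma \ref{irrednondeg} and Lemma \ref{numericallemma}(e)). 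With that observation the three-line argument is complete, and your last three paragraphs are unnecessary.

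Your attempt to re-prove the needed projection inequality purely numerically, without assuming non-negativity, contains a genuine hole that you yourself flag: in the inductive step you must rule out that one application of $\phi$ sends $b$ (of degree $>r$) to a class $b^-$ of degree $\le 0$, where neither your base case nor your inductive hypothesis is available; your Cauchy--Schwarz bound only covers $d' > \sqrt{(r^2-1)(2r-3)}$, and ``finitely many small degrees to verify by hand for each $r$'' is not a proof --- $r$ ranges over all integers $\ge 3$, so this is an infinite family of unverified cases with no uniform argument supplied. Your closing remark, that ``the same reduction shows $b$ has non-negative multiplicities,'' is circular: that non-negativity is exactly what your induction would need to import from Lemma \ref{numericallemma}(e), whose proof rests on the irreducibility of an actual curve via Lemma \ref{irrednondeg}, not on the numerics alone. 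The correct repair is not more inequalities but the geometric fact above: once $w^{-1}(c)$ is known to be the class of an actual irreducible $(i)$-Weyl line, its degree and multiplicities are non-negative and Corollary \ref{projinequalityalliWeyllines} applies verbatim.
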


\begin{proof}
By hypothesis $c$ is in the orbit of $h-e_1-e_2$, $h-e_1$, or $h$,
and therefore $w^{-1}(c)$ is as well.
Therefore since $w^{-1}(c)$ has degree bigger than one,
Corollary \ref{projinequalityalliWeyllines} applies and we have
$\langle w^{-1}(c), h-e_j \rangle \geq 1$.
This implies the result, because the bilinear form is invariant under the Weyl group,
so that $\langle c, w(h-e_j) \rangle = \langle w^{-1}(c), h-e_j \rangle$.
\end{proof}

\begin{definition}\label{strong}
Let $c\in A^{r-1}(Y^r_{s})$ be a curve class.
We say that $c$ satisfies the \emph{strong projection inequality} if
\[
\langle c, w(h-e_j)\rangle \geq 1
\]
for any element $w$ of the Weyl group for which $\deg(w^{-1}(c))>1$.
\end{definition}

Hence any $(i)$-Weyl line class satisfies the strong projection inequality,
by Lemma \ref{iWeyllineclassisnumerical}.

\begin{definition}
A curve class $c \in A^{r-1}$ is a \emph{numerical $(i)$-Weyl line class} if
\begin{itemize}
\item[(a)] $\langle c, F \rangle = 2+i(r-1)$;
\item[(b)] $\langle c, c \rangle = (r-1)(i-1)+1$;
\item[(c)] for any $w$ in the Weyl group with $\deg(w^{-1}(c))>1$ then
$\langle c, w(h-e_j)\rangle\geq 1,$
\end{itemize}
i.e., $c$ has the same linear and quadratic invariants as an $(i)$-Weyl line,
and satisfies the strong projection inequality.
\end{definition}	

\begin{corollary}\label{iWeylisnumerical}
Every $(i)$-Weyl line class in $\bbP^r$
(i.e., a class in the orbit of $h-e_1-e_2$, $h-e_1$, or $h$)
is a numerical $(i)$-Weyl line class.
\end{corollary}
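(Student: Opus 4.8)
The plan is to verify the three defining conditions of a numerical $(i)$-Weyl line class one at a time. Each condition has essentially already been isolated in the preceding development, so the proof is a bookkeeping assembly rather than a fresh computation: conditions (a) and (b) are supplied by Corollary \ref{iWeyllineinvariants}, and condition (c) is supplied by Lemma \ref{iWeyllineclassisnumerical}.

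First I would dispatch conditions (a) and (b). An $(i)$-Weyl line class $c$ lies in the Weyl orbit of the prototype class $h-e_1-e_2$, $h-e_1$, or $h$ (according to $i=-1,0,1$). Since $F$ is Weyl-invariant and the bilinear form $\langle -,-\rangle$ is Weyl-invariant, the invariants $\langle c,F\rangle$ and $\langle c,c\rangle$ agree with their values on the prototype, and those values are exactly $\langle c,F\rangle = 2+i(r-1)$ and $\langle c,c\rangle = 1+(i-1)(r-1)$. This is precisely the statement of Corollary \ref{iWeyllineinvariants}, so (a) and (b) hold by direct citation.

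Next, condition (c) is word-for-word the strong projection inequality of Definition \ref{strong}, which asserts that $\langle c, w(h-e_j)\rangle \geq 1$ for every $j$ and every Weyl element $w$ with $\deg(w^{-1}(c))>1$. This is exactly the conclusion of Lemma \ref{iWeyllineclassisnumerical}, already noted in the remark following Definition \ref{strong}. Hence no new estimate is required; I would simply invoke that lemma.

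The one point requiring care, and the closest thing to an obstacle, is confirming that the hypotheses of Lemma \ref{iWeyllineclassisnumerical} are genuinely met, namely that $c$ has positive degree and non-negative multiplicities. This holds because an $(i)$-Weyl line is by definition an honest curve in $Y^r_s$, obtained as the image of a line through $1-i$ points under a sequence of standard Cremona transformations; its Chow class $dh-\sum_j m_j e_j$ therefore has $d\geq 1$ and each $m_j\geq 0$. With the positivity confirmed, Lemma \ref{iWeyllineclassisnumerical} applies, condition (c) holds for all admissible $w$, and together with (a) and (b) this shows $c$ is a numerical $(i)$-Weyl line class, completing the argument.
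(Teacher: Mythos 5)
Your proposal is correct and matches the paper's proof, which consists of exactly the same two citations: Corollary \ref{iWeyllineinvariants} for the linear and quadratic invariants and Lemma \ref{iWeyllineclassisnumerical} for the strong projection inequality. Your explicit verification that an $(i)$-Weyl line, being the class of an honest curve, has positive degree and non-negative multiplicities is a hypothesis check the paper leaves implicit, but it is in the same spirit and does not change the argument.
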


This follows immediately from Corollary \ref{iWeyllineinvariants}
Lemma \ref{iWeyllineclassisnumerical}.

We will leave the following as an open question in $\bbP^r$. 
\begin{question}\label{quest}
Is the converse true?
Are the two notions of $(i)$-Weyl line class
(i.e., a class in the orbit of $h-e_1-e_2$, $h-e_1$, or $h$) 
and numerical $(i)$-Weyl line class
(i.e., satisfying the above three numerical conditions)
equivalent in $\bbP^r$?
\end{question}

We devote the next section to the study of curves in $\bbP^3$.
We generalize Max Noether's inequality for planar curves
to curves in $\bbP^3$
and provide a positive answer to this question 
for classes on general blowups of $\mathbb{P}^3$.

\subsection{M. Noether's inequality for curves in $\mathbb{P}^3$}\label{noether section}
The aim of this section is to answer affirmatively Question \eqref{quest} in $\bbP^3$
by proving a Noether type inequality for curves,
which was originally proved by M. Noether for planar curves.
The same proof was later generalized to divisors in $\bbP^r$ \cite{DO, DP}.
We show below that the proof also applies for curves in $\bbP^3$.

We remark that for numerical $(i)$-classes in $\bbP^3$
the projection inequality reads $m_j\leq (d-1)/2$
by \eqref{projectioninequalityiclass}.

\begin{theorem}[M. Noether's inequality for curves in $\bbP^3$] \label{noether}
Fix $i \in \{-1,0,1\}$.
Let $c = dh-\sum_i m_i \in A^{2}(Y^3_{s})$ be a curve class,
with degree $d$ greater than one.
Assume that 
\begin{enumerate}
  	\item 	$\langle c, F \rangle = 2i+2$. (This gives $2d=\sum_{i=1}^s m_i + i + 1$.)
  	\item	$\langle c, c \rangle \leq 2i-1$. (This gives $d^2 \leq 2\sum_{i=1}^s m_i^2 +2i-1$.)
  	\item	$m_k \leq (d-1)/2 \text{ for any }  k\in \{1, \ldots, s\}.$
 \end{enumerate}
Then $m_1+m_2+m_3+m_4 > d$.
\end{theorem}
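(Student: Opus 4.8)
The plan is to argue by contradiction, exploiting the two scalar constraints (1) and (2) together with the per-point cap (3). First I would put the multiplicities in non-increasing order $m_1 \ge m_2 \ge \cdots \ge m_s \ge 0$ (as throughout the paper), abbreviate $S = \sum_i m_i$ and $Q = \sum_i m_i^2$, and record the reformulated hypotheses $S = 2d-i-1$, $Q \ge \tfrac12(d^2-2i+1)$, and $m_k \le c := \tfrac{d-1}{2}$ for all $k$. Assuming for contradiction that $T := m_1+m_2+m_3+m_4 \le d$, the goal is to squeeze $Q$ between an upper bound forced by $T\le d$ and the lower bound coming from (2), and to show these are incompatible.

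The heart of the argument is an upper bound on $Q$. I would first control the tail: since $m_i \le m_4$ for $i \ge 5$ one has $\sum_{i\ge 5} m_i^2 \le m_4\sum_{i\ge 5}m_i = m_4(S-T)$, which after expanding $T$ gives the exact rearrangement
\[
Q \le \sum_{j=1}^{3} m_j(m_j-m_4) + m_4 S .
\]
Applying the cap $m_j\le c$ to each nonnegative factor $m_j(m_j-m_4)$ yields $Q \le cP + m_4(S-3c)$, where $P=m_1+m_2+m_3$; here one checks $S-3c=\tfrac12(d-2i+1)>0$, so the bound increases in $m_4$. The key point is that $m_4$ obeys two competing constraints, $m_4\le P/3$ (from the ordering) and $m_4\le d-P$ (from $T\le d$), which produce two estimates
\[
Q \le \tfrac{PS}{3} \qquad\text{and}\qquad Q \le cP + (d-P)(S-3c).
\]
The first is strictly increasing in $P$ and the second is non-increasing (its slope in $P$ is $i-1\le 0$), and the two coincide at $P=\tfrac{3d}{4}$ with common value $\tfrac{dS}{4}$. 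Hence $\min$ of the two never exceeds $\tfrac{dS}{4}$ for any $P$, giving the clean unconditional bound $Q \le \tfrac{dS}{4} = \tfrac{d(2d-i-1)}{4}$.

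Finally I would compare this with the lower bound from (2): the inequality $\tfrac{d(2d-i-1)}{4} < \tfrac{d^2-2i+1}{2}$ simplifies to $i(d-4)+(d+2)>0$, which holds for every $i\in\{-1,0,1\}$ and every $d>1$ (evaluating to $6$, $d+2$, and $2d-2$ respectively). This contradicts $Q \ge \tfrac12(d^2-2i+1)$, so $T>d$, as claimed.

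I expect the main obstacle to be calibrating the upper bound on $Q$ so that it is at once tight enough to contradict (2) and honest about the constraints. A naive term-by-term estimate such as $Q\le cS$ or $Q\le \tfrac{dS}{3}$ is too lossy and already fails at small degrees (for instance $d=4$, $i=-1$); the essential trick is the grouping that makes the cap contributions telescope, and above all combining the two opposing constraints $m_4\le P/3$ and $m_4\le d-P$ rather than using either one alone. One should also verify the minor positivity facts used along the way ($S-T\ge d-2\ge 0$, $S-3c>0$, and $d-P\ge 0$ under the contradiction hypothesis), but these are routine.
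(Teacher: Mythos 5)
Your proof is correct, and it takes a genuinely different route from the paper's. I verified the key steps in your notation ($S=\sum m_i$, $Q=\sum m_i^2$, $P=m_1+m_2+m_3$, $T=P+m_4$): the exact rearrangement $Q\le\sum_{j=1}^{3}m_j(m_j-m_4)+m_4S$ after the tail estimate, the positivity $S-3c=\tfrac12(d-2i+1)>0$, the slope computation $4c-S=i-1\le 0$, the crossing of the two bounds at $P=\tfrac{3d}{4}$ with common value $\tfrac{dS}{4}$, and the final comparison reducing to $i(d-4)+(d+2)>0$ for $d>1$ all hold. The paper instead runs the classical Noether machine directly: it sets $q_j=(m_j^2+\cdots+m_s^2)/(m_j+\cdots+m_s)$, proves $q_1>\tfrac{d}{4}$, and then establishes $m_1+m_2+m_3+m_4\ge 4q_1$ unconditionally by a telescoping chain involving the remainders $r_k=m_k-q_k$ and ratios $p_k$, with hypothesis (3) entering through the nonnegativity of $1-3p_2$, $1-2p_3$, $1-p_4$. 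Your final contradiction $\tfrac{dS}{4}<\tfrac12(d^2-2i+1)$ is literally the paper's computation $q_1>\tfrac{d}{4}$ in disguise, so the two arguments share that kernel; where they differ is the mechanism linking $T$ to it. You prove the implication "$T\le d$ implies $Q\le \tfrac{dS}{4}$" by isolating the top four multiplicities exactly and then optimizing the linear bound $cP+m_4(S-3c)$ in $m_4$ under the two competing constraints $m_4\le P/3$ and $m_4\le d-P$ — the genuinely new idea, since neither constraint alone suffices, as your $d=4$, $i=-1$ remark correctly shows. What your route buys: no recursive $q_k,r_k,p_k$ bookkeeping, no division by possibly vanishing partial sums, uniform treatment of all $s$ (the paper needs a separate $s\le 4$ case at the start), and a transparent accounting of where each hypothesis is used — (1) in $S=2d-i-1$ and the slope sign, (3) in the cap step, (2) only at the very end. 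What the paper's route buys: it yields the stronger unconditional inequality $T\ge 4q_1=4Q/S$ rather than a proof by contradiction, and it is the same telescoping template that proves the Noether inequality for divisors in $\bbP^r$ (as in \cite{DO,DP}), so it generalizes along that axis. One small point to make explicit if you write this up: your tail bound $m_i^2\le m_4m_i$ and your ordering $m_s\ge 0$ both use non-negativity of the multiplicities, an assumption the paper's proof also uses implicitly (in $m_j\ge q_j$) and which holds in every application of the theorem.
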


\begin{proof}
If $s \leq 4$ then the first condition implies 
$m_1+\ldots+m_s=2d-i-1 \geq 2d-2$.
If $d \geq 3$ then $2d-2 \geq d+1 > d$, which suffices.
If $d=2$ then $2d-i-1 \leq d$ only if $i=1$;
and in that case the second condition says that $3 \leq 2\sum m_i^2$.
However the third condition forces all multiplicities to be zero, giving a contradiction.

We consider now $s\geq 5$. 
 We put the multiplicities in non-increasing order: 
$m_k \geq m_p$ for $k\leq p$.
For any $j\in \{1, \ldots, s\}$,  we define
\[
q_j:=\frac{m_j^2+\ldots+ m_s^2}{m_j+\ldots +m_s}
\]
and observe that $m_j \geq q_j$  for any $j\in \{1, \ldots, s\}$,
which follows from the decreasing order of the multiplicities,
since  $\sum_{k=j}^{s} m_j  m_k\geq \sum_{k=j}^s m_k^2$.

Notice that
\begin{equation}\label{eq3}
\begin{split}
 q_1: &= \frac{m_1^2+\ldots+ m_s^2}{m_1+\ldots +m_s} \\
 &\geq \frac{(d^2+(1-2i))/2}{2d-i-1} \\
 &> \frac{d}{4}
\end{split}
\end{equation}
The last inequality is equivalent to $d(1+i) > (4i-2)$ which holds for $-1 \leq i\leq 1$ and $d>1$.


Define $r_k = m_k-q_k$ for all $k$; note that $r_k \geq 0$.
Define $p_k = \frac{m_{k-1}}{m_k+\cdots + m_s}$ for $k >1$.
Note that
\begin{equation}\label{eq2}
\begin{split}
q_{k-1}-r_{k-1}p_k
	&=q_{k-1} -(m_{k-1}-q_{k-1})p_k \\
	&= -m_{k-1}p_k + q_{k-1}(1 + p_k) \\
	&=  -m_{k-1}p_k + q_{k-1}\frac{m_{k-1}+\cdots _+ m_s}{m_k+\cdots + m_s} \\
	&= -m_{k-1}p_k + \frac{m_{k-1}^2 +\cdots + m_s^2}{m_k+\cdots + m_s} \\
	&= \frac{-m_{k-1}^2+m_{k-1}^2 +\cdots + m_s^2}{m_k+\cdots + m_s} \\
&= q_k.
\end{split}
\end{equation}
This implies that
\begin{equation}\label{eq4}
\begin{split}
q_{k-1}+r_{k-1}+ N q _k &= q_{k-1}+r_{k-1} + N(q_{k-1}-r_{k-1}p_k) \\
&= (N+1)q_{k-1} + r_{k-1}(1-Np_k)
\end{split}
\end{equation}
for any $N$.

Several of these quantities $1-Np_k$ are non-negative.
To see this first note that $2d-4m_1-i-1 \geq 0$,
since this is equivalent to $m_1 \leq \frac{d-1}{2} + \frac{1-i}{4}$,
which follows from assumption 3, given that $i \in \{-1,0,1\}$.
Since $m_1$ is the largest multiplicity,
this implies that
\begin{equation}\label{fourmults}
2d-k-i-1 \geq 0
\end{equation}
for $k$ equal to the sum of any four multiplicities.

Now we see the following:
\begin{equation}\label{eq1}
\begin{split}
1-3p_2 &= \frac{-3m_1+m_2+\cdots + m_s}{m_2+\cdots + m_s}
	= \frac{2d-4m_1-i-1}{m_2+\cdots + m_s} \geq 0,\\
1-2p_3 &= \frac{-2m_2+m_3+\cdots + m_s}{m_3+\cdots + m_s}
	= \frac{2d-m_1-3m_2-i-1}{m_3+\cdots + m_s} \geq 0,\;\;\text{and}\\
1-p_4 &= \frac{-m_3+m_4+\cdots + m_s}{m_4+\cdots + m_s}
	= \frac{2d-m_1-m_2-2m_3-i-1}{m_4+\cdots + m_s} \geq 0,
%
\end{split}
\end{equation}
all of which use \eqref{fourmults} applied to the numerator.

Now compute, using \eqref{eq3}, \eqref{eq2}, \eqref{eq4}, and \eqref{eq1}:
\[
\begin{split}
m_1+m_2+m_3+m_4 & \geq (q_1+r_1) + (q_2+r_2) + (q_3+r_3) + q_4 \\
&=(q_1+r_1) + (q_2 + r_2) + 2 q_3 +	r_3 \cdot (1-p_4)\\
&\geq (q_1+r_1) + (q_2 + r_2) + 2q_3 \\
&= (q_1+r_1) + 3q_2 + r_2(1-2p_3) \\
&\geq  (q_1+r_1) + 3q_2 \\
& = 4q_1 + r_1(1-3p_2) \\
&\geq 4q_1 > d
\end{split}
\]
\end{proof}

Hence any class satisfying the hypotheses of Theorem \ref{noether}
is not Cremona reduced, and we may apply a standard Cremona transformation
at the four points with largest multiplicities to strictly reduce the degree.

\begin{remark}
An $(i)$-Weyl line in $Y^3_s$ satisfies the hypotheses of Theorem \ref{noether}
since it has has the correct linear and quadratic invariants
and satisfies the third condition by Lemma \ref{iWeyllineclassisnumerical}.
\end{remark}

\subsection{A numerical criterion for Weyl curves in $\mathbb{P}^3$}
We are now in a position to prove the main result of this section.


\begin{theorem}\label{equivalence2}
Let $C$ be an irreducible curve in $Y^3_s$ with class $c = dh-\sum_i m_i e_i \in A^2(Y^3_s)$ 
such that $d>1$ (and all multiplicities non-negative).
Then the following are equivalent:
\begin{itemize}	
\item[(a)] 
	\begin{enumerate}
		\item 	$\langle c, F \rangle=2i+2$. 
		\item	$\langle c, c \rangle= 2i-1$.
		\item	$\langle c, w(h-e_k)\rangle\geq 1$
			for any $w$ in the Weyl group with $\deg(w^{-1}(c))>1$.
	\end{enumerate}
\item[(b)] $c$ is a $(i)$-Weyl line class 
and represents an irreducible $(i)$-curve $C$ in $Y^3_s$ 
which is Cremona-equivalent to a line through $1-i$ points.
\end{itemize}
\end{theorem}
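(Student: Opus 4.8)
The plan is to prove the two implications separately, with essentially all the weight falling on $(a)\Rightarrow(b)$. The reverse implication $(b)\Rightarrow(a)$ is immediate: if $c$ is an $(i)$-Weyl line class, then Corollary \ref{iWeylisnumerical}, read at $r=3$ (where $2+i(r-1)=2i+2$ and $(r-1)(i-1)+1=2i-1$), asserts precisely the three numerical conditions (a.1)--(a.3). So I would concentrate on showing that a class satisfying (a.1)--(a.3) with $d>1$ can be transported by standard Cremona transformations down to the class of a line through $1-i$ points. The natural framework is a downward induction on the degree $d$, applying at each step one standard Cremona transformation at the four largest multiplicities.

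For the inductive step, fix $c$ with $d>1$ satisfying (a.1)--(a.3). First I would record the non-degeneracy of $C$: rewriting (a.1) and (a.2) as $\sum_i m_i = 2d-i-1$ and $\sum_i m_i^2 = (d^2-2i+1)/2$ and applying Cauchy--Schwarz to the positive multiplicities, exactly as in Lemma \ref{numericallemma}(c), forces at least four of them to be nonzero; since no hyperplane of $\mathbb{P}^3$ contains four general points, $C$ is non-degenerate. This lets me invoke the Proposition following Lemma \ref{irrednondeg}: any standard Cremona transform of $C$ is again an irreducible curve with non-negative degree and multiplicities. Next, condition (a.3) with $w=1$ (legitimate since $d>1$) reads $\langle c,h-e_k\rangle = d-2m_k\ge 1$, i.e.\ $m_k\le (d-1)/2$ for all $k$; together with (a.1) and (a.2) these are exactly the three hypotheses of Noether's inequality (Theorem \ref{noether}). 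Hence $m_1+m_2+m_3+m_4>d$, the class is not Cremona reduced, and the standard Cremona transformation $\phi$ at the four largest multiplicities strictly lowers the degree while preserving irreducibility and non-negativity of the multiplicities.

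The delicate point---and the reason the plain projection inequality does not suffice, as Example \ref{examplesd=7and13}(b) shows---is that hypothesis (3) of Noether must survive every step of the reduction, not merely hold for $c$. Writing the class reached after $j$ steps as $c_j=w_j(c)$ for a Weyl element $w_j$, conditions (a.1) and (a.2) persist because $F$ and $\langle-,-\rangle$ are Cremona-invariant, while the needed projection inequality $m^{(j)}_k\le (d_j-1)/2$ at stage $j$ is exactly $\langle c_j,h-e_k\rangle\ge 1$, that is $\langle c,w_j^{-1}(h-e_k)\rangle\ge 1$. This is precisely the instance $w=w_j^{-1}$ of the strong projection inequality (a.3), and it is legitimate because $\deg(w_j(c))=d_j>1$. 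Thus the strong projection inequality is engineered exactly so that Noether applies at every stage; I expect the main work of the proof to lie in this bookkeeping, verifying that each reduction step simultaneously consumes one instance of (a.3) and delivers the full package (a.1)--(a.3) to the next stage.

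Since the degrees form a strictly decreasing sequence of positive integers, the process terminates, and by the above it can only stop at $d=1$. There condition (a.3) with $w=1$ is vacuous, so no multiplicity is forced to vanish, and (a.1)--(a.2) give $\sum_i m_i = \sum_i m_i^2 = 1-i$; hence every nonzero multiplicity equals $1$ and there are exactly $1-i$ of them, so the terminal class is that of a line through $1-i$ points. Reversing the chain of Cremona transformations then exhibits $c$ as a Weyl translate of that line class---so $c$ is an $(i)$-Weyl line class---and simultaneously realizes the irreducible curve $C$ as the image of an honest line through $1-i$ points under a sequence of standard Cremona transformations. Being such an image, $C$ is an $(i)$-Weyl line, hence an irreducible $(i)$-curve Cremona-equivalent to a line through $1-i$ points, which is statement (b).
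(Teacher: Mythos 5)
Your proposal follows essentially the same route as the paper's proof. The implication (b) $\Rightarrow$ (a) is handled identically via Corollary \ref{iWeylisnumerical}, and for (a) $\Rightarrow$ (b) you reproduce the paper's mechanism: the $w=1$ instance of (a.3) supplies hypothesis 3 of Theorem \ref{noether}, so the class is not Cremona reduced; non-degeneracy (Cauchy--Schwarz as in Lemma \ref{numericallemma}(c)) plus the Proposition following Lemma \ref{irrednondeg} keeps the transformed curve irreducible with non-negative multiplicities; and Weyl-invariance of the pairing transports the strong projection inequality through each step. Your substitution $w = w_j^{-1}$ in (a.3), with the degree check $\deg(w_j(c)) = d_j > 1$, is literally the paper's computation with $\eta = \phi\circ w$, just organized as a forward iteration on $c_j = w_j(c)$ rather than as the paper's induction on $d$ with the transformed class $D=\phi(C)$ shown to satisfy (a.1)--(a.3). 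Your endgame at $d=1$ is also sound: $\sum_i m_i = \sum_i m_i^2 = 1-i$ gives $\sum_i m_i(m_i-1)=0$, forcing $m_i\in\{0,1\}$ with exactly $1-i$ ones.

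One point needs patching: you never rule out an intermediate stage of degree $2$, whereas the paper does so explicitly in its base step. Your loop requires non-degeneracy of the curve at every stage, both for the Cauchy--Schwarz count of nonzero multiplicities (which, as in Lemma \ref{numericallemma}, needs $d_j \geq r = 3$; at $d_j=2$ and $i=1$ the bound only yields three nonzero multiplicities, and three points span a plane) and for the Proposition guaranteeing that $\phi$ preserves irreducibility, positive degree, and non-negative multiplicities --- an irreducible degree-$2$ curve in $\bbP^3$ is a plane conic, hence degenerate, so that Proposition would not apply and the next degree could drop to $0$ or below. The fix is the paper's one-line parity observation: condition (a.2), preserved at every stage, gives $d_j^2 = 2\sum_i m_i^2 + 2i - 1$, which is odd for every $i$, so all degrees in the chain are odd and $d_j = 2$ never occurs. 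With that inserted (and with the explicit remark that the invariants of Lemma \ref{numericallemma}(c) must be re-run for $i=0,1$, which your ``at least four nonzero multiplicities'' version correctly does), your argument is complete and matches the paper's.
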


\begin{proof}
We have seen that (b) implies (a)
via Corollary \ref{iWeylisnumerical}.
We claim now that (a) implies (b); we will prove this by induction on the degree $d$.
	
{\bf 1. Base Step.} We assumed that $C$ is a curve of degree $d \geq 2$.
The first observation is that there exist no curve of degree $d=2$
satisfying condition (a) (1), (2), (3);
if $i=\pm 1$ then (a)(2) requires $d$ to be odd,
and if $i=0$ there is no integer solution to (1) and (3).
If $d=3$, the only integer solution to the assumptions
is the twisted cubic with multiplicity one at $5-i$ points:
$c=3h-\sum_{k=1}^{5-i} e_k$.
	
{\bf 2. Induction Step.} Assume now that $d \geq 4$,
and we assume the statement holds for any curve $D$ 
with $3\leq \deg(D) \leq d-1$.
We will prove the result for a curve $C$ of degree $d$,
with non-negative multiplicities, which we may arrange in non-increasing order.
We assume then that such a $C$ satisfies (a)(1,2,3).

The assumption (a)(3) (applied with the identity Weyl group element and $k=1$) 
implies that $m_1\leq (d-1)/2$
and therefore (a) implies that hypothesis of Theorem \ref{noether} are satisfied.
Therefore the class $c$ is not Cremona reduced.
Since $d=\deg(C) \geq 3$ we also have that $C$ is non-degenerate, by Lemma \ref{numericallemma}(c).
Hence the standard Cremona transformation $\phi$ will reduce the degree of $C$.
If we let $D:=\phi(C)$, then $D$ is also an irreducible curve:
$C$ cannot be contracted by $\phi$ since it is non-degenerate.
We also have $\deg(D) < \deg(C)$.

Furthermore,
$\langle D, F \rangle = \langle \phi(D), \phi(F) \rangle = \langle C,F\rangle = 2i+2 $
and
$\langle D, D \rangle= \langle C, C \rangle= 2i-1$.
Therefore (1) and (2) of (a) hold for $D$;
we claim now that (3) is also true for $D$.

To see this, fix any $w$ in the Weyl group with $\deg(w^{-1}(D))>1$.
We then have
$$
\langle D, w(h-e_k)\rangle
=\langle \phi(D), (\phi\circ w)(h-e_k)\rangle
= \langle C, \eta(h-e_k)\rangle
$$
where $\eta=\phi \circ w$.
We claim now that $\deg(\eta^{-1}(C))>1$.
Indeed, $\phi$ is an involution, so
\begin{equation}
\begin{split}
  	\deg \eta ^{-1}(C) &=\deg( (\phi \circ w)^{-1}(C))\\
  	&=\deg  (w^{-1} \circ \phi^{-1})(C)\\
  	&=\deg w^{-1} (\phi (C))\\
&=\deg w^{-1}(D)\\
&>1.
\end{split}
\end{equation}
This proves the claim, and
therefore by the hypothesis (a)(3) on $C$, we have $\langle C, \eta(h-e_k)\rangle \geq 1$.
Since
$$
\langle D, w(h-e_k)\rangle = \langle \phi(C), \phi(\eta(h-e_k))\rangle
= \langle C, \eta(h-e_k)\rangle \geq 1,
$$
the hypothesis (a)(3) also holds for $D$ as well.
Hence by induction, we conclude that $D$ is a $(i)$-Weyl line,
i.e., in the orbit of a line through $1-i$ points;
therefore so is $C$.

\end{proof}
 

\end{document}